\newtheorem{thm}{Theorem}[section]
\newtheorem{lem}{Lemma}[section]
\newtheorem{defn}{Definition}[section]
\newtheorem{cor}{Corollary}[section]
\newtheorem{prop}{Proposition}[section]
\newtheorem{rem}{Remark}[section]
\theoremstyle{definition}
\begin{document}
\numberwithin{equation}{section}

\textheight=218mm

\newcommand{\set}[1]{\mathfrak{#1}}
\newcommand{\vrect}{\set{V}}
\newcommand{\Stack}{\set{S}}
\newcommand{\tile}{\set{T}}
\newcommand{\cent}{\operatorname{cent}}
\newcommand{\cdim}{n}
\newcommand{\lset}{\left\lbrace}
\newcommand{\rset}{\right\rbrace}

\allowdisplaybreaks

\title[Cauchy--Szeg\H{o} projection]
{Fundamental properties of Cauchy--Szeg\H{o} projection on quaternionic Siegel upper half space and applications
}
\author{Der-Chen Chang, Xuan Thinh Duong, Ji Li, Wei Wang and Qingyan Wu}

\address{Der-Chen Chang,
Department of Mathematics and Department of Computer Science,
Georgetown University, Washington D.C. 20057, USA
and
Graduate Institute of Business Administration, College of Management,
Fu Jen Catholic University, New Taipei City 242, Taiwan, ROC.}
\email{chang@georgetown.edu}

\address{Xuan Thinh Duong, Department of Mathematics, Macquarie University, NSW, 2109, Australia.}
\email{xuan.duong@mq.edu.au}

\address{Ji Li, Department of Mathematics, Macquarie University, NSW, 2109, Australia.}
\email{ji.li@mq.edu.au}

\address{Wei Wang, Department of Mathematics, Zhejiang University, Zhejiang 310027, China.}
\email{wwang@zju.edu.cn}

\address{Qingyan Wu, Department of Mathematics\\
         Linyi University\\
         Shandong, 276005, China
         }
\email{qingyanwu@gmail.com}

\subjclass[2010]{32A25,\ 32A26,\ 43A80,\ 42B20}
\keywords{Cauchy--Szeg\H{o} projection, quaternionic Siegel upper half space, regularity, pointwise lower bound, Schatten class}

\begin{abstract}
We investigate the Cauchy--Szeg\H{o} projection for quaternionic Siegel upper half space to obtain the pointwise (higher order) regularity estimates for Cauchy--Szeg\H{o} kernel and prove that the Cauchy--Szeg\H{o} kernel is non-zero everywhere, which further yields a non-degenerated pointwise lower bound.
As applications, we prove the uniform boundedness of Cauchy--Szeg\H{o} projection on every atom on the quaternionic Heisenberg group, which is used to give an atomic decomposition of regular Hardy space $ H^p$ on quaternionic Siegel upper half space  for $2/3<p\leq1$. Moreover, we establish the characterisation of singular values  of the commutator of
Cauchy--Szeg\H{o} projection based on the kernel estimates and on the recent new approach by Fan--Lacey--Li. The quaternionic structure (lack of commutativity) is encoded in the symmetry groups of  regular functions and the associated partial differential equations.
\end{abstract}

\hskip-1cm
\maketitle
\section{Introduction }

\subsection{Background}
The theory of slice regular functions of a quaternionic variable has been studied intensively (cf. e.g. \cite{ACMM,CGSS,DRSY,GS,Q} and references therein)
and applied successfully to the study of quaternionic  closed operators, quaternionic function spaces and operators on them,
e.g. quaternionic  slice   Hardy space, quaternionic de Branges space and  quaternionic Hankel operator  etc.
(cf. e.g.  \cite{ACQS,ACS,ACS21,CGSS,Sa} and references therein). Meanwhile,
 quaternionic  analysis of several variables has been developed substantially in the  last three decades.   The quaternionic  counterpart of the $\overline{\partial}$-complex  and the $k$-Cauchy--Fueter complex  is  known explicitly now. And the analysis of  these complexes and their operators plays  an important role in studying quaternionic  regular functions (cf. e.g. \cite{adams2,Ba,bS,bures,CSS,Wang,wang19} and references therein). There are also important results along this direction on quaternionic Monge--Amp\`ere equations,
 quaternionic pluripotential theories and quaternionic Calabi--Yau problem (cf. e.g. \cite{alesker1,alesker4,alesker3,AS17,KS,alesker6,wan-wang} and references therein).
 The latter one has its origin in supersymmetric string theory.

 In this paper, we obtain a higher order regularity estimate and a sharp pointwise lower bound for the Cauchy--Szeg\H{o}  kernel on quaternionic Siegel upper half space. The Cauchy--Szeg\H{o} projection plays an important role in complex analysis \cite{FEF,NRSW,LS}.   It is also a key tool connecting the regular function in quaternionic Siegel upper half space to its boundary value. Based on this, {we further establish an atomic decomposition of  quaternionic regular Hardy space over the quaternionic Siegel upper half space and the characterisation of singular values of the commutator of Cauchy--Szeg\H{o} projection. Because of lack of commutativity in quaternionic case, 
most of the known methods from complex analysis are not available. 
In fact, in many cases even the statements of analogous results are different.}
 Nevertheless, we treat a quaternionic regular function  as a vector-valued function  satisfying certain partial differential equations, hence certain techniques in harmonic analysis and PDEs can be applied. From this point of view, the quaternionic structure is encoded in the symmetry groups of  regular functions, the spaces and PDEs, which are different from the complex ones.

Let $\mathbb H^{n }$ be the $n$-dimensional quaternion
space, which is the collection of $n$-tuples $(q_1, \ldots  , q_n), q_l  \in \mathbb{H}$. We write
$q_l = x_{4l-3} + x_{4l-2}\mathbf i + x_{4l-1}\mathbf j + x_{4l}\mathbf  k$, $l = 1, \ldots  , n$.
The quaternionic  Siegel upper half space  is
$\mathcal U :=\left\{q=(q_1,\cdots, q_n)=(q_1,q')\in\mathbb H^n\mid \operatorname{Re} q_1>|q'|^2\right\}$
, whose boundary
\begin{equation}\label{eq:boundary}
 \partial \mathcal U :=\{(q_1,q')\in\mathbb H^n\mid\rho:= {\rm Re}\,q_1-|q'|^2=0\}
\end{equation}
 is a quadratic hypersurface.
  A $C^1$-smooth function $f = f_1 +\mathbf{i }f_2 +\mathbf{j }f_3 +\mathbf{k} f_4 : \mathcal U\rightarrow \mathbb{ H} $ is
called {\it (left) regular} on $\mathcal U  $ if it satisfies the Cauchy--Fueter equations
$\overline{ \partial}_{q_l} f (q) = 0,~  l = 1, \ldots , n$, for any $q\in \mathcal U  $,
where
\begin{equation}\label{eq:CF}
   \overline{\partial}_{q_l}:=\partial_{  x_{4l-3}} + \partial_{ x_{4l-2}} \mathbf i + \partial_{ x_{4l-1}} \mathbf j + \partial_{ x_{4l}} \mathbf  k.
\end{equation}

 The Hardy space $ H^p(\mathcal U )$ consists of all regular functions $F$ on $\mathcal U $, for which
$$\|F\|_{  H^p(\mathcal U )}:=\left(\sup_{\varepsilon>0}\int_{\partial \mathcal U }|F_\varepsilon(q)|^pd\beta(q)\right)^{1\over p}<\infty,$$
 where $F_\varepsilon$ is for its ``vertical translate'', i.e. $F_\varepsilon (q) = F(q + \varepsilon \bf{e_1})$,
where $ {\bf e_1} = (1, 0, 0, \ldots  , 0)$.
 The Cauchy--Szeg\H{o} projection is the  operator from   $L^2(\partial\mathcal U )$ to  $ H^2(\mathcal U )$
  satisfying the following reproducing
formula:
\begin{equation*}\label{eq:Szego0}
    F(q) =
\int_{\partial\mathcal{U} }
S(q, p)F^b(p) d\beta (p),  \qquad q\in \mathcal{U} ,
 \end{equation*}whenever $F \in   H^2(\mathcal{U} )$ with the boundary value $F^b$  on $\partial \mathcal{U} $, where $S(q,p)$ is the Cauchy--Szeg\H{o} kernel:
\begin{align}\label{cauchy-szego}
S(q,p)=s\Big(q_1+\overline p_1-2\sum_{k=2}^n\overline p_kq_k\Big)
\end{align}
for $p= (p_1,\cdots, p_n)\in\mathcal U$, $q=(q_1,\cdots, q_n)\in\mathcal U$, and
\begin{align}\label{s}
s(\sigma)=c_{n-1}{\partial^{2(n-1)}\over \partial x_1^{2(n-1)}}{{\overline \sigma}\over |\sigma|^4},\quad
\sigma=x_1+x_2{\bf i}+x_3{\bf j}+x_4{\bf k}\in\mathbb H
\end{align}
with the real constant $c_{n-1}$ depending only on $n$  (\cite[{Theorem A}]{CMW}).

Note that  the boundary $ \partial \mathcal U$ can be identified with the quaternionic Heisenberg group
 $\mathscr H^{n-1}$, and  the Siegel upper half space $\mathcal U$ can be identified with $\mathscr U:=\mathbb{R}_+\times\mathscr H^{n-1}$ by a quadratic diffeomorphism (\ref{eq:pi}). A regular  function   on $\mathscr U $ can be characterized by  Proposition \ref{prop:regular-equiv}. 
 Details of these notations will be introduced in Section 2.
For $0<p<\infty$, the Hardy space $  H^p(\mathscr U )$ consists of all regular functions $F$ on $\mathscr U $ with  
$$\|F\|_{  H^p(\mathscr U )}:=\left(\sup_{\varepsilon>0}\int_{{\mathscr H^{n-1} } }|F (\varepsilon, g)|^pd g \right)^{1\over p}<\infty,$$
where $d g $ is the Lebegue measure on $\mathbb{R}^{4n-1}$, which is an invariant measure on $\mathscr H^{n-1}$.
The Cauchy--Szeg\H o projection integral operator $ \mathcal P : L^2(\mathscr H^{n-1} )\rightarrow H^2( \mathscr U )$ is given by
\begin{equation}\label{eq:SzegoP}
    (\mathcal P  f) (t,g)   =
\int_{ \mathscr H^{n-1} }
    K((t,g), g') f (g') dg',  \qquad ( t,g) \in \mathscr U ,
 \end{equation} for $  f\in L^2(\mathscr H^{n-1} )$,
satisfying the following reproducing formula:
\begin{equation}\label{eq:Szego}
    f   = \mathcal P  f^b ,
 \end{equation}whenever $f \in   H^2(\mathscr U )$ and $f^b$ its boundary value on $ \mathscr H^{n-1}  $, where the reproducing
kernel $K((t,g), g')$ is induced from
$S(q,p)$ in \eqref{cauchy-szego} (cf. \eqref{eq:K}).

Just recently, in \cite{CDLWW} we have further obtained an explicit formula for this Cauchy--Szeg\H{o} kernel and then proved that it is a Calder\'on--Zygmund kernel when restricted to $\mathscr H^{n-1}$. Moreover, a suitable version of pointwise lower bound is provided:
``there exist a large positive constant $r_0$ and a positive constant $C$ such that
for every $g\in \mathscr H^{n-1}$, there exists a `twisted truncated sector' $\Omega_g\subset \mathscr H^{n-1}$ such that
$ \inf_{g'\in \Omega_g} {d(g,g')}=r_0 $ and  for every $g_1\in B(g,1)$ and $g_2\in \Omega_g$ we have
$|S(g_1, g_2)|\gtrsim d(g_1,g_2)^{-Q}.$
Moreover, the sector $\Omega_g$ is regular in the sense that $|\Omega_g|=\infty$ and for every
$R_2>R_1>2r_0$
$ \big| \big(B(g,R_2)\backslash B(g,R_1)\big)  \cap \Omega_g\big | \approx  \big| B(g,R_2)\backslash B(g,R_1)\big|$
with the implicit constants  independent of  $R_1,R_2$ and $g$.''

Note that this lower bound yields the characterisations of boundedness and compactness of commutators $[b, \mathcal  P]$ for $b\in$ BMO space and VMO space respectively.

\subsection{Statement of main results}
In this paper we  investigate several fundamental results for the Cauchy--Szeg\H{o} projection, including the
pointwise regularity (higher order) and the non-degenerated property for the Cauchy--Szeg\H{o} kernel. To be more explicit, we obtain the following results. 

\medskip
\noindent Part 1. The pointwise regularity of the Cauchy--Szeg\H{o} kernel.
\begin{thm}\label{main1}
Suppose $g \in \mathscr H^{n-1}\setminus \{0\}$.
$$\left|Y^IK((1,0),g) \right|  \lesssim {1\over (\|g\|+1)^{Q+d(I)}} ,$$
 where $Q=4n+2$ is the homogeneous dimension of $\mathscr H^{n-1}$, $I=(\alpha_1,\cdots, \alpha_{4n-1})\in\mathbb N^{4n-1}$ is a multi-index, $Y^I, d(I)$ and $I$ are defined in \eqref{di}.
\end{thm}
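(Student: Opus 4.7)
The plan is to exploit the explicit formula (\ref{cauchy-szego})--(\ref{s}) for the Cauchy--Szeg\H{o} kernel together with the homogeneous structure of the quaternionic Heisenberg group $\mathscr H^{n-1}$. Via the identification (\ref{eq:pi}), the interior point $(1,0)\in\mathscr U$ corresponds to $q=(1,0)\in\mathcal U$, and a boundary point $g\in\mathscr H^{n-1}$ corresponds to some $p(g)\in\partial\mathcal U$ satisfying $\operatorname{Re}p_1(g)=|p'(g)|^2$. Plugging $q=(1,0)$ into (\ref{cauchy-szego}) collapses the sum, giving $K((1,0),g)=s(\sigma(g))$ with $\sigma(g)=1+\overline{p_1(g)}$. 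Separating real and imaginary parts and comparing with the Kor\'anyi norm $\|g\|$ on $\mathscr H^{n-1}$ (which satisfies $\|g\|^4\approx|p'(g)|^4+|\operatorname{Im}p_1(g)|^2$) yields the basic geometric estimate $|\sigma(g)|^2 \approx (1+|p'(g)|^2)^2+|\operatorname{Im}p_1(g)|^2\approx 1+\|g\|^4$, so $|\sigma(g)|\approx 1+\|g\|^2$.

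From (\ref{s}) and the fact that $\overline{\sigma}/|\sigma|^4$ is smooth and homogeneous of degree $-3$ on $\mathbb H\setminus\{0\}$, I obtain the uniform bound $|\partial^\beta s(\sigma)|\lesssim|\sigma|^{-(2n+1+|\beta|)}$ for every multi-index $\beta$ in the four real variables of $\mathbb H$. The main step is then the chain rule for the left-invariant vector fields $Y^I$. Since $\sigma(g)$ is polynomial in $g$ of weighted homogeneous degree $2$ with respect to the Heisenberg dilations $\delta_r$, and each $Y_i$ is $\delta_r$-homogeneous of degree $-d(Y_i)$, one obtains inductively the expansion $Y^I s(\sigma(g))=\sum_{|\beta|\le|I|}(\partial^\beta s)(\sigma(g))\,P_{I,\beta}(g)$, where each $P_{I,\beta}$ is a polynomial on $\mathscr H^{n-1}$ of weighted homogeneous degree $2|\beta|-d(I)$, and $P_{I,\beta}\equiv 0$ whenever $2|\beta|<d(I)$. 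Substituting the bounds above produces $|Y^I K((1,0),g)|\lesssim\sum_{2|\beta|\ge d(I)}(1+\|g\|)^{2|\beta|-d(I)}(1+\|g\|^2)^{-(2n+1+|\beta|)}\lesssim(1+\|g\|)^{-(Q+d(I))}$, since $(1+\|g\|^2)\approx(1+\|g\|)^2$ and $Q=4n+2$; each summand cleanly telescopes to the claimed exponent, independently of $\beta$.

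The main technical obstacle lies in the accurate bookkeeping for the iterated chain rule: because $\mathscr H^{n-1}$ is non-commutative, the coefficients of the $Y_i$ encode the group law, and one must verify that after each differentiation the weighted degree of the polynomial factors drops by exactly $d(Y_i)$ while the order of the derivative of $s$ increases by at most one. This is however purely a matter of dilation homogeneity and is clean once the $Y_i$ are expressed in dilation-adapted coordinates. As a sanity check, one may split into the regions $\|g\|\le 1$ and $\|g\|\ge 1$: on the former, smoothness of $s$ near $\sigma(0)=1$ gives an $O(1)$ bound directly; on the latter one recovers the homogeneous Calder\'on--Zygmund type estimates for the boundary kernel already obtained in \cite{CDLWW}, after absorbing the smooth extra factor coming from the $t=1$ regularization.
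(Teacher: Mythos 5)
Your proposal is correct and follows essentially the same route as the paper: both reduce to the composition $K((1,0),g)=s(1+|\bm y|^2-\bm t)$, exploit that $s$ (being $c_{n-1}\partial_{x_1}^{2n-2}$ of $\overline{\sigma}/|\sigma|^4$) is homogeneous of degree $-(2n+1)$, and then bound $Y^I$ of the composition by nonisotropic degree counting against powers of $(1+|\bm y|^2)^2+|\bm t|^2\approx(1+\|g\|)^4$. The only difference is bookkeeping: the paper runs an explicit induction showing $Y^I s$ equals a nonisotropic polynomial of homogeneous degree $4n-2+4|I|-d(I)$ over $\left[(1+|\bm y|^2)^2+|\bm t|^2\right]^{2n+|I|}$, whereas you encode the same count through uniform homogeneity bounds on $\partial^\beta s$ together with a Fa\`a di Bruno expansion whose polynomial coefficients have weighted degree at most $2|\beta|-d(I)$ and vanish when $2|\beta|<d(I)$ --- a point that is genuinely needed for the final exponent and that you correctly justify from the degree-$\le 2$ structure of $\sigma(g)$ and the fact that $Y^J$ lowers weighted degree by $d(J)$.
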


Based on this property, we further have the following result on
Cauchy--Szeg\H{o} projection.

\begin{prop}\label{prop0}  Suppose $0<p<\infty$. For any { $(p,\infty,\alpha)$-atom $a$ on the quaternionic Heisenberg group $\mathscr H^{n-1}$ {\rm (Definition \ref{def atom on Hn})}, }
$\mathcal P (a)$ is an element in $H^p(\mathscr U )$ with
\begin{equation*}
   \|\mathcal P (a)\|_{H^p(\mathscr U )}\leq C_{p,n,\alpha}
\end{equation*}
  for some constant  depending only on $p,n$ and $\alpha$.
\end{prop}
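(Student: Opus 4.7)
The plan is to follow the classical strategy for atomic decomposition: decompose the horizontal slice $\mathscr H^{n-1}\times\{\varepsilon\}$ into a \emph{near} and a \emph{far} region relative to the atom's support, bound the near part by the $L^2$-continuity of $\mathcal P$, and the far part by exploiting the cancellation of $a$ against a group Taylor expansion of $K$ whose remainder is controlled by Theorem \ref{main1}.

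First I would exploit the left-Heisenberg translation and parabolic dilation symmetries of the kernel $K$ (inherited from formulas \eqref{cauchy-szego}--\eqref{s}; both actions preserve the $H^p(\mathscr U)$ norm and the atomic normalisation) to reduce to $a$ supported in the unit ball $B=B(0,1)\subset\mathscr H^{n-1}$ with $\|a\|_\infty\le|B|^{-1/p}\approx 1$ and cancellation of order $\alpha$ phrased via the $Y^I$. Fixing a large constant $\gamma$, on the near piece $\{\|g\|\le\gamma\}$ H\"older's inequality (since $p\le 1<2$) combined with the fact that $\mathcal P:L^2(\mathscr H^{n-1})\to H^2(\mathscr U)$ is a projection (so $\sup_\varepsilon\|\mathcal Pa(\varepsilon,\cdot)\|_2\le\|a\|_2$) yields
$$\int_{\|g\|\le\gamma}|\mathcal Pa(\varepsilon,g)|^p\,dg\le|B(0,\gamma)|^{1-p/2}\,\|a\|_2^p\lesssim|B|^{p/2-1}\approx 1,$$
uniformly in $\varepsilon$.

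Next, on the far piece $\{\|g\|>\gamma\}$ I would invoke a Folland--Stein left-Taylor polynomial $T^\alpha_{(\varepsilon,g)}(g')$ of degree $\alpha$ for $g'\mapsto K((\varepsilon,g),g')$ about $0$, expressed in the $Y^I$-basis so that the atomic moment conditions annihilate it. Theorem \ref{main1}, transported from the base point $(1,0)$ to $(\varepsilon,g)$ via the translation/dilation covariance of $K$, controls the remainder by
$$\bigl|K((\varepsilon,g),g')-T^\alpha_{(\varepsilon,g)}(g')\bigr|\lesssim\frac{\|g'\|^{\alpha+1}}{(\varepsilon+\|g\|)^{Q+\alpha+1}},$$
uniformly in $g'\in B$. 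Together with $\|a\|_1\lesssim 1$, this gives $|\mathcal Pa(\varepsilon,g)|\lesssim(\varepsilon+\|g\|)^{-(Q+\alpha+1)}$, hence
$$\int_{\|g\|>\gamma}|\mathcal Pa(\varepsilon,g)|^p\,dg\lesssim\int_{\|g\|>\gamma}(\varepsilon+\|g\|)^{-p(Q+\alpha+1)}\,dg\lesssim 1,$$
uniformly in $\varepsilon>0$, provided $p(Q+\alpha+1)>Q$, i.e.\ $\alpha>Q(1/p-1)-1$. For $2/3<p\le 1$, this is exactly the quantitative cancellation built into the $(p,\infty,\alpha)$-atom definition, closing the estimate.

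The main obstacle will be the non-commutative Taylor step: one must work with left-invariant Folland--Stein polynomials in the $Y^I$-basis and verify that the atomic moment conditions, formulated in those same variables, kill $T^\alpha_{(\varepsilon,g)}$ exactly; while the kernel bound of Theorem \ref{main1}---stated only at the distinguished base point $(1,0)$---must be transferred to general $(\varepsilon,g)$ by carefully tracking the covariance of $K$ under the joint Heisenberg--parabolic action that is implicit in \eqref{cauchy-szego}--\eqref{s}.
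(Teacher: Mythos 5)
Your proposal follows essentially the same route as the paper's proof: the paper likewise uses the translation and dilation invariance of $K$ (Proposition \ref{prop:Inv-kernel}) to normalize (it fixes $\varepsilon=1$ and the atom's center, rather than the atom's radius, which is an equivalent use of the same symmetry), and then splits into a near region handled by H\"older's inequality plus the $L^2$ projection bound and a far region handled by the Folland--Stein Taylor polynomial of homogeneous degree $\alpha$, the atom's vanishing moments, and the derivative estimates of Theorem \ref{main1}. The only (harmless) imprecision is the form of your remainder bound, which by parabolic homogeneity should be $\bigl(\sqrt{\varepsilon}+d(g,g')\bigr)^{-(Q+\alpha+1)}$ rather than $(\varepsilon+\|g\|)^{-(Q+\alpha+1)}$; since you only use the $\|g\|^{-(Q+\alpha+1)}$ decay uniformly in $\varepsilon$, the conclusion is unaffected.
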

This proposition implies that the   regular Hardy space  $  H^p(\mathscr U )$ is nontrivial, since there are lots of $(p,\infty,\alpha)$-atoms on the quaternionic Heisenberg group $\mathscr H^{n-1}$.

\medskip

\noindent Part 2. Pointwise  lower bound of the Cauchy--Szeg\H{o} kernel.

\smallskip
Letting $t\rightarrow 0$ in \eqref{eq:SzegoP}, we obtain a convolution operator on the quaternionic Heisenberg group, which is also denoted by $\mathcal P$ by abuse of notations,
\begin{align}\label{cs projection gp}
(\mathcal P f)(g)= p.v. \int_{\mathscr H^{n-1}}K(g,h)f(h)dh=p.v. \int_{\mathscr H^{n-1}}K(h^{-1}\cdot g)f(h)dh,
\end{align}
where the kernel $K(g,h):=\lim_{t\rightarrow0} K((t,g), h )$ for $g\neq h $ and $K(g ):=K(g,0) $.
 Note that {\eqref{cs projection gp} holds  whenever $f$ is an $L^2$ function supported in a compact set and
	 $g\not\in {\rm supp}\, f$.}

\begin{thm}\label{main2} 
 $K(g) \not=0,$ for\ all $ g\in \mathscr H^{n-1}\backslash\{0\}.$
\end{thm}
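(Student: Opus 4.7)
The plan is to reduce the non-vanishing of $K$ on $\mathscr H^{n-1}\setminus\{0\}$ to a one-variable statement for the function $s$ of \eqref{s}, and then to dispatch that statement by an explicit Chebyshev-polynomial calculation. I would first trace through the identification $\partial\mathcal U\cong\mathscr H^{n-1}$ given by the quadratic diffeomorphism, together with \eqref{cauchy-szego}--\eqref{s}: taking $h=0$ and the boundary limit $t\to 0^+$ in \eqref{eq:SzegoP}, one sees that $K(g)$ is a nonzero constant multiple of $s(\sigma)$ with $\sigma:=|y|^2+\tau\in\mathbb H$ for $g=(\tau,y)\in\operatorname{Im}\mathbb H\times\mathbb H^{n-1}$. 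Thus $g\neq 0$ produces $\sigma\neq 0$ with $\operatorname{Re}\sigma=|y|^2\geq 0$ and $\operatorname{Im}\sigma=\tau$, and it suffices to show $s(\sigma)\neq 0$ on this closed half-space minus the origin.

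Write $\sigma=x_1+\vec x$ with $\rho:=|\vec x|$, and set $m:=2(n-1)$. Leibniz together with $\partial_{x_1}\overline\sigma=1$ and $\partial_{x_1}^2\overline\sigma=0$ reduces \eqref{s} to
\[
s(\sigma)=c_{n-1}\bigl[\overline\sigma\,A_m+m\,A_{m-1}\bigr],\qquad A_k:=\partial_{x_1}^k(|\sigma|^{-4}),
\]
with each $A_k$ real-valued, so $\operatorname{Im}s(\sigma)=-c_{n-1}\vec x\,A_m$ and $\operatorname{Re}s(\sigma)=c_{n-1}(x_1A_m+mA_{m-1})$. The degenerate cases are immediate: when $\vec x=0$, $s(\sigma)$ is a nonzero constant times $x_1^{-3-m}$; when $x_1=0$ but $\vec x\neq 0$, direct evaluation at $\theta=\pi/2$ together with the even parity of $m$ gives $A_m(0,\rho)\neq 0$, whence $\operatorname{Im}s(\sigma)\neq 0$. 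The remaining case $x_1>0$, $\rho>0$ would force $A_m=0$ and $A_{m-1}=0$ simultaneously.

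To rule this out, I would partial-fraction $|\sigma|^{-4}=(x_1-i\rho)^{-2}(x_1+i\rho)^{-2}$ in $x_1$, differentiate term by term, and pass to polar coordinates $r=|\sigma|$, $\theta=\arctan(\rho/x_1)\in(0,\pi/2)$. The identities $\sin(k\theta)/\sin\theta=U_{k-1}(\cos\theta)$ and $\cos(k\theta)=T_k(\cos\theta)$ recast $A_m=0$ and $A_{m-1}=0$ as (with $c:=\cos\theta$)
\[
U_m(c)=(m+1)T_{m+2}(c),\qquad U_{m-1}(c)=m\,T_{m+1}(c).
\]
Eliminating via $U_m=cU_{m-1}+T_m$ and $T_{m+2}=2cT_{m+1}-T_m$ collapses the system to $T_m(c)=cT_{m+1}(c)$, equivalently $\cos(m\theta)=\cos((m+2)\theta)$, whose only solutions in $(0,\pi/2)$ are $\theta=k\pi/(m+1)$. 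At such $\theta$ one computes $T_{m+1}(c)=(-1)^k$ and $U_{m-1}(c)=(-1)^{k+1}$, so the second equation becomes $-1=m$, impossible since $m=2(n-1)\geq 0$ (the case $n=1$, $m=0$ being trivial as $s(\sigma)=c_0\overline\sigma/|\sigma|^4$ is manifestly nonzero). The main obstacle is executing the Chebyshev algebra cleanly while tracking signs and parities, and verifying that the boundary limit in \eqref{cs projection gp} really produces $s(\sigma)$ up to a nonzero constant; a conceptual shortcut once $\vec x\neq 0$ is to restrict everything to the complex slice $\mathbb R\oplus\mathbb R\cdot(\vec x/|\vec x|)\cong\mathbb C$, where $\sigma$ becomes a single complex number $z=x_1+i\rho$ and the Chebyshev structure of the successive derivatives $\partial_{x_1}^k(\overline z/|z|^4)$ is visible from the outset.
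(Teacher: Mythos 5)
Your proposal is correct, and it reaches the conclusion by a genuinely different core computation than the paper. Both arguments first reduce to the non-vanishing of the one-variable kernel $s$ of \eqref{s} on the closed half-space $\{\operatorname{Re}\sigma\ge0\}\setminus\{0\}$, via $K(g)=s(|\bm y|^2+\bm t)$; the paper then rotates into the complex slice using the equivariance $s(\sigma\xi\bar\sigma)=\sigma s(\xi)\bar\sigma$ from \cite{CMW} (see \eqref{eq:s-K}) and invokes the explicit expansion \cite[(3.6)]{CDLWW}, which turns $s(re^{i\theta})$ into the finite sum $\sum_{k=0}^{2n-2}(k+1)e^{-2ki\theta}$, whose non-vanishing is settled by the sharp triangle-inequality bound $|(2n-1)t^{2n}+1|<2n$ plus a separate check when $t^{2n}=1$. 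You instead split $s(\sigma)=c_{n-1}\bigl(\bar\sigma A_m+mA_{m-1}\bigr)$ with $A_k=\partial_{x_1}^k(|\sigma|^{-4})$ real, so that $s(\sigma)=0$ with $\operatorname{Im}\sigma\neq0$ forces the simultaneous system $A_m=A_{m-1}=0$, and you rule that out by partial fractions and the Chebyshev identities $U_m=cU_{m-1}+T_m$, $T_{m+2}=2cT_{m+1}-T_m$, which collapse the system to $\cos(m\theta)=\cos((m+2)\theta)$ and then to the absurdity $-1=m$; I checked the polar formula $A_k=\tfrac{(-1)^kk!}{2r^{k+4}}\bigl[\sin((k+1)\theta)\sin^{-3}\theta-(k+1)\cos((k+2)\theta)\sin^{-2}\theta\bigr]$ and the elimination, and they are sound. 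What your route buys is self-containedness (no appeal to the series formula of \cite{CDLWW}) and an explicit treatment of the degenerate cases $\bm y=0$ and $\bm t=0$ (i.e.\ $x_1=0$ or $\operatorname{Im}\sigma=0$), which the paper's computation, carried out only for $x_1>0$, $x_2<0$, leaves implicit; what the paper's route buys is brevity, since the explicit formula was already available from its companion work, and a one-line geometric-series summation in place of the Chebyshev elimination. Do make sure, when writing it up, to justify $K(g)=s(|\bm y|^2+\bm t)$ (it follows directly from \eqref{eq:K} with $g'=0$ and $t\to0$, as the paper notes), and to state the restriction $m\ge1$ when passing from $\operatorname{Re}s=0$ to $A_{m-1}=0$, with the trivial case $m=0$ handled separately as you do.
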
\color{black}

Based on this result, we further have the non-degenerated pointwise lower bound.
To begin with, we adapt the work of \cite{Str} on self-similar tilings to obtain a ``nice'' decomposition of $\mathscr H^{n-1}$, denoted by $\tile$, analogous to the decomposition of $\mathbb R^n$ into dyadic cubes in classical harmonic analysis, and describe an analogue of a lemma of Journ\'e \cite{J}. We recall the detail of tiles in Section 3.

The following pointwise lower bound is a refinement of the result in \cite{CDLWW} in the sense that for each given tile $T$, we know exactly the location of the existing tile $\hat T$.

\begin{thm}\label{main3}
Let  $   K = K_1 +K_2 \textbf{i}+K_3 \textbf{j} +K_4 \textbf{k} $.
There exists a positive integer $\mathfrak a_0$ such that:\  

{\rm(1)} for any $T\in \tile_{j}$, there is a unique $T_{\mathfrak a_0}\in \tile_{j+\mathfrak a_0}$ such that $T\subset T_{\mathfrak a_0}$.

{\rm(2)} there exist positive constants $3\leq \mathfrak a_{1}\leq \mathfrak a_{2}$ and $C>0$ such that for any tile $T\in\tile_{j}$, there exists a tile $\hat{T}\in\tile_{j}$ satisfying:
\begin{enumerate}
  \item[(a)] $\hat{T}\subset T_{\mathfrak a_0}$;

  \item[(b)] $\mathfrak a_{1}2^{j}\leq d(\cent{(T)},\cent(\hat{T}))\leq \mathfrak a_{2}2^{j}$;

  \item[(c)] for all $(g,\hat g)\in T\times \hat{T}$, there exists $i\in\{1,2,3,4\}$ such that
                       $|K_i(g,\hat g)|\geq C 2^{-(4n+2)(j)}$ and  $K_i(g,\hat g)$ does not change sign.

\end{enumerate}
\end{thm}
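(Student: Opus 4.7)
The plan is to combine the non-vanishing from Theorem \ref{main2}, the continuity coming from Theorem \ref{main1}, and the $-Q$--homogeneity of $K$ under the Heisenberg dilations, and then to select $\hat T$ so that the set $\{\hat g^{-1}g:(g,\hat g)\in T\times\hat T\}$ lands inside a ball on which some component $K_{i_k}$ is bounded below with a fixed sign. Part (1) is built into the nested structure of the Strichartz tiling of $\mathscr H^{n-1}$. For part (2), I first combine Theorem \ref{main2} (no non-trivial zero of $K$), Theorem \ref{main1} with $I=0$ (continuity of $K$ away from $0$) and compactness of the unit sphere $\{\|g\|=1\}$ to obtain finitely many points $g_{*,1},\ldots ,g_{*,N}$ on the sphere, indices $i_1,\ldots ,i_N\in\{1,2,3,4\}$, a radius $\eta\in(0,1)$, and $c_0>0$ with the property that on each ball $B(g_{*,k},2\eta)$ the function $K_{i_k}$ has constant sign and $|K_{i_k}|\ge c_0$, while $\bigcup_k B(g_{*,k},\eta)$ already covers the sphere. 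Since $K$ is homogeneous of degree $-Q=-(4n+2)$ under the Heisenberg dilations $\delta_r$ (visible from \eqref{cauchy-szego}--\eqref{s} upon tracking the scaling of $\sigma$), the same bound reads $|K_{i_k}|\ge c_0 r^{-Q}$, with the same sign, on $B(\delta_r g_{*,k},2\eta r)$ for every $r>0$.

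Fix integers $\mathfrak b\ge 2\log_2(1/\eta)+O(1)$ and $\mathfrak a_0\gg \mathfrak b$, and, for $T\in\tile_j$ with center $g_T$, let $T_{\mathfrak a_0}$ be as in part (1). Setting $r=2^{j+\mathfrak b}$, pick $k$ so that the candidate point $p_T:=g_T\cdot(\delta_r g_{*,k})^{-1}$ lies well inside $T_{\mathfrak a_0}$: this is possible because the displacement $r$ is only a fraction $2^{\mathfrak b-\mathfrak a_0}$ of the diameter of $T_{\mathfrak a_0}$, and the $N$ directional candidates $\{g_{*,k}^{-1}\}$ cover the unit sphere, so at least one of them points far enough inward from any location of $g_T\in T\subset T_{\mathfrak a_0}$. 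Take $\hat T\in\tile_j$ to be the (unique) tile containing $p_T$. Then (a) holds by construction, while $d(g_T,\cent \hat T)=\|\delta_r g_{*,k}\|+O(2^j)=(2^{\mathfrak b}+O(1))\,2^j$ yields (b) with $\mathfrak a_1=2^{\mathfrak b}-C$ and $\mathfrak a_2=2^{\mathfrak b}+C$, both exceeding $3$ once $\mathfrak b$ is large enough.

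For (c), note that $K(g,\hat g)=K(\hat g^{-1}g)$, so it suffices to show $\hat g^{-1}g\in B(\delta_r g_{*,k},2\eta r)$ for every $(g,\hat g)\in T\times\hat T$. Writing $g=g_T u$, $\hat g=\hat g_T v$ with $\|u\|,\|v\|\lesssim 2^j$ and $\hat g_T^{-1}g_T=\varepsilon\cdot\delta_r g_{*,k}$ with $\|\varepsilon\|\lesssim 2^j$ (the tile-diameter error), the task reduces to bounding $\|(\delta_r g_{*,k})^{-1}\,w\,\delta_r g_{*,k}\,u\|$ with $w:=v^{-1}\varepsilon$, $\|w\|\lesssim 2^j$. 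Conjugation by $\delta_r g_{*,k}$ in the quaternionic Heisenberg group preserves the horizontal part of $w$ and adds to its vertical part a symplectic cross-term of size $\lesssim r\cdot 2^j$; in the Kor\'anyi norm this produces a bound $\lesssim \sqrt{r\cdot 2^j}+2^j=2^{j+\mathfrak b/2}+2^j$, which by the choice of $\mathfrak b$ is dominated by $\eta r=\eta\,2^{j+\mathfrak b}$. Hence $|K_{i_k}(\hat g^{-1}g)|\ge c_0 r^{-Q}\gtrsim 2^{-(4n+2)j}$ with constant sign, as required. The main obstacle is precisely this conjugation estimate: the non-commutativity of the (quaternionic) Heisenberg product couples horizontal perturbations of order $2^j$ to vertical perturbations of order $\sqrt{r\cdot 2^j}$, and it is this interaction across the two scales $2^j$ and $r$ that forces $\mathfrak b$ (and hence $\mathfrak a_0$) to be taken substantially larger than the naive value $\log(1/\eta)$ in order to absorb the cross-term inside the good neighborhood of radius $\eta r$.
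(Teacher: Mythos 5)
Your overall strategy is essentially the paper's: non-vanishing of $K$ on the unit sphere (Theorem \ref{main2}), continuity away from the origin, a neighborhood on which some component of $K$ has a fixed sign and a uniform lower bound, the homogeneity $K(\delta_r g)=r^{-Q}K(g)$ together with left-invariance to rescale, and absorption of the tile-scale errors of order $2^j$ into the good neighborhood of radius $\eta r$. Your treatment of the conjugation step is in fact more careful than the corresponding line in the paper: since $d$ is only left-invariant, $\|(\delta_r g_{*,k})^{-1}\,w\,\delta_r g_{*,k}\|$ really does pick up a cross term of order $(r\,2^{j})^{1/2}$, and your requirement $\mathfrak b\gtrsim 2\log_2(1/\eta)$ is the right way to absorb it.

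There is, however, a genuine gap where you construct $\hat T$. You assert that, because the finitely many directions $g_{*,k}^{-1}$ cover the unit sphere, for every tile $T$ at least one of the $N$ candidate points $p_T=g_T\cdot(\delta_r g_{*,k})^{-1}$ with $r=2^{j+\mathfrak b}$ lies in $T_{\mathfrak a_0}$, ``because at least one direction points far enough inward.'' Covering the sphere of directions gives no inward-pointing direction from an arbitrary location of $g_T$: the center of $T$ may lie within distance $\sim 2^{j}$ of the boundary of $T_{\mathfrak a_0}$, that boundary is fractal, and your displacement $2^{j+\mathfrak b}$ is much larger than $2^{j}$ but much smaller than the inscribed radius $C_1 2^{j+\mathfrak a_0}$, so nothing you have invoked excludes that all $N$ candidates, which sit on the sphere of radius exactly $r$ about $g_T$, fall outside $T_{\mathfrak a_0}$ (nor, if $g_T$ happens to sit near the center of its level-$(j+\mathfrak b)$ ancestor, that this sphere misses that ancestor's inscribed ball). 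What is needed here is the inscribed-ball property of tiles, Lemma \ref{thm:Heisenberg-grid}(4), together with a target chosen deep inside an ancestor tile; this is precisely how the paper argues, choosing $\hat h\in T_{N+\mathfrak a_0}$ with $d(\hat h, T_{N+\mathfrak a_0}^c)>10C_2 2^{j}$ at distance comparable to the full size of $T_{N+\mathfrak a_0}$ from $h$, and only afterwards reading off the direction $\tilde g_0$ and the nonvanishing component and sign at that direction. Your scheme can be repaired in the same spirit without giving up the finite net: aim at a point of the inscribed ball of the level-$(j+\mathfrak b)$ ancestor $T_{\mathfrak b}\supset T$ at distance comparable to $2^{j+\mathfrak b}$ from $g_T$ (then the level-$j$ tile $\hat T$ containing it satisfies $\hat T\subset T_{\mathfrak b}\subset T_{\mathfrak a_0}$ by nestedness), and take $g_{*,k}$ to be the net point within $\eta$ of the resulting normalized direction; your estimates for (b) and (c) then go through. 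But as written, the selection of $k$, and with it parts (a) and (b), is unjustified.
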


\color{black}

\subsection{Applications of our main results}
\subsubsection{\textbf{Regular Hardy space on the quaternionic Siegel upper half space}}

 The Beltrami--Laplace operator associated to a K\"ahler metric
  is a fundamental tool in the   study of holomorphic $H^p$ functions,
since it annihilates   holomorphic functions \cite[chapter III]{St} \cite{Gel}.
   On    the quaternionic   Siegel upper half space, direct calculation shows that regular functions  do not satisfy  the Beltrami--Laplace equation associated to the quaternionic  hyperbolic metric (some modification may   work).
 But by   identifying the Siegel upper half space with $\mathscr U=\mathbb{R}_+\times\mathscr H^{n-1}$,  we observed that a regular function on $\mathscr U$  satisfies  a more simple equation, the heat equation associated with the {sub-Laplacian} on the  quaternionic Heisenberg group.
\begin{thm}  \label{prop:regular-sublap} An $\mathbb{H}$-valued function $f( t ,g)$  regular on $\mathscr U=\mathbb{R}_+\times\mathscr H^{n-1}$    satisfies
\begin{equation}\label{eq:regular-sublap}
   \left(\partial_{t }+\frac 1{8(n-1)}\triangle_H\right)f=0,
\end{equation}
where $\triangle_H$ is the sub-Laplacian on $ \mathscr H^{n-1}$.
\end{thm}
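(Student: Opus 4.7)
The plan is to pull the Cauchy--Fueter system from $\mathcal U$ back to the product coordinates $(t,g)$ on $\mathscr U=\mathbb R_+\times\mathscr H^{n-1}$, and then combine the resulting first-order equations with the structural decomposition of $\Delta_H$ on the quaternionic Heisenberg group.

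Under the diffeomorphism $\pi$ the chain rule gives $\partial_{x_1}=\partial_t$, $\partial_{x_{j+1}}=T_j:=\partial_{y_j}$ ($j=1,2,3$), and for $l\geq 2$, $\partial_{x_{4l-3+a}}=\partial_{(q_l)_a}-2(q_l)_a\partial_t$ ($a=0,1,2,3$). Substituted into \eqref{eq:CF}, the $l=1$ equation becomes $\partial_tf=-(T_1f)\mathbf i-(T_2f)\mathbf j-(T_3f)\mathbf k$, which I refer to as $(\star)$; for $l\geq 2$ it becomes
\[
\overline D_{q_l}f:=\partial_{u_l}f+(\partial_{v_l}f)\mathbf i+(\partial_{w_l}f)\mathbf j+(\partial_{z_l}f)\mathbf k=2(\partial_tf)q_l,
\]
which I call $(\star\star)$. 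Applied to $(\star)$ once more, $(\star)$ gives the scalar identity $\partial_t^2f=-(T_1^2+T_2^2+T_3^2)f$ (all quaternionic cross-terms vanish by $T_iT_j=T_jT_i$).

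Two independent consequences of regularity are now combined. First, applying the conjugate operator $D_{q_l}$ (reversing the signs of $\mathbf i,\mathbf j,\mathbf k$) to $(\star\star)$, summing over $l=2,\dots,n$, and invoking $\sum_{l=1}^n\Delta_{q_l}f=0$ (from $D_{q_l}\overline D_{q_l}=\Delta_{q_l}$ and the regularity of $f$), one deduces, after eliminating $\partial_t^2f$ via the identity from the previous paragraph,
\[
\sum_{l=2}^n\Delta'_{q_l}f\;=\;4|q'|^2(T_1^2+T_2^2+T_3^2)f+4R\partial_tf+8(n-1)\partial_tf,
\]
where $\Delta'_{q_l}:=\partial_{u_l}^2+\partial_{v_l}^2+\partial_{w_l}^2+\partial_{z_l}^2$ and $R:=\sum_l\sum_a(q_l)_a\partial_{(q_l)_a}$. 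Second, expressing $\partial_{(q_l)_a}=X_l^a-\sum_j(\text{linear in }q_l)T_j$ in terms of the horizontal left-invariant vector fields $X_l^0,\dots,X_l^3$ coming from the Heisenberg group law $(y,q)(y',q')=(y+y'+2\operatorname{Im}(\bar q\cdot q'),q+q')$, and expanding $\Delta_H=\sum_l\sum_a(X_l^a)^2$, the $T_iT_j$ cross-terms cancel by the skew-symmetry of $\operatorname{Im}(\bar q\cdot q')$ and one obtains
\[
\Delta_H=\sum_{l=2}^n\Delta'_{q_l}+4|q'|^2(T_1^2+T_2^2+T_3^2)+4\sum_{l=2}^n\sum_{j=1}^{3}M_{j,l}T_j,
\]
where $M_{j,l}$ is the right-multiplication-by-$\mathbf e_j$ vector field on the $l$-th $\mathbb H$-factor. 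Substituting the first display into the second and exploiting the further $(\star\star)$-consequence $\sum_l(\overline D_{q_l}f)\bar q_l=2|q'|^2\partial_tf$ together with an analogous ``vector-valued'' identity derived by multiplying $(\star\star)$ on the right by $\mathbf e_j\bar q_l$, summing, and applying $(\star)$, the expression collapses to $\Delta_Hf=-8(n-1)\partial_tf$, which is precisely \eqref{eq:regular-sublap}.

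The main obstacle is identifying and verifying the ``vector-valued'' identity that supplies the missing $4\sum_{l,j}M_{j,l}T_jf+4R\partial_tf+16(n-1)\partial_tf=-8|q'|^2(T_1^2+T_2^2+T_3^2)f$; this is the additional input beyond $\sum_{l=1}^n\Delta_{q_l}f=0$. It reflects the interplay between the right-multiplication fields $M_{j,l}$ (which arise from the cross-terms of the sub-Laplacian) and the non-commutativity commutators $[q_l,\mathbf e_j]=q_l\mathbf e_j-\mathbf e_jq_l=2(q_l^{\mathrm{Im}}\times\mathbf e_j)$ (which arise when $(\star)$ is used to rewrite $2(\partial_tf)q_l$). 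Its verification is essentially linear-algebraic but delicate, since every application of $(\star\star)$ introduces a right-multiplication by $q_l$ that interacts with the quaternionic units through the Heisenberg group law; it is the essential algebraic content distinguishing the quaternionic heat identity from its (simpler) complex analogue.
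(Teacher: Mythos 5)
Your argument has a genuine gap: everything funnels into the single ``vector-valued identity'' $4\sum_{l,j}M_{j,l}T_jf+4R\,\partial_tf+16(n-1)\partial_tf=-8|q'|^2(T_1^2+T_2^2+T_3^2)f$, which you do not prove --- you only remark that its verification is ``delicate but essentially linear-algebraic''. This is not a routine detail; it is precisely where the noncommutativity must be resolved, and it is the entire content of the theorem beyond the harmonicity $\sum_l\Delta_{q_l}f=0$ that you do use. Worse, as stated it is doubtful, because your starting equations $(\star)$ and $(\star\star)$ put the quaternionic units and the variable $q_l$ on the wrong side: with the conventions of this paper the tangential operators are $\overline{\partial}_{q_{l+1}}+2q_{l+1}\overline{\partial}_{q_1}$ with $q_{l+1}$ multiplying on the \emph{left} (see the proof of Proposition \ref{prop:regular-equiv}), so regularity pulled back to $\mathscr U$ gives $\overline{\partial}_{\widetilde q_{l+1}}f=2\widetilde q_{l+1}\,\partial_t f$ and $\partial_t f=-\mathbf i\,\partial_{t_1}f-\mathbf j\,\partial_{t_2}f-\mathbf k\,\partial_{t_3}f$, not $2(\partial_t f)q_l$ and $-(T_1f)\mathbf i-(T_2f)\mathbf j-(T_3f)\mathbf k$. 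In $\mathbb H$ left and right placements are not interchangeable, and every later step in which you right-multiply by $\mathbf e_j\bar q_l$ or commute $q_l$ past a unit depends on this; the missing identity would at least have to be rederived in the correct convention before one could try to verify it. As written, the proof does not close.

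For comparison, the paper's proof never leaves the group-invariant frame and needs none of this machinery: by Proposition \ref{prop:regular-equiv} regularity on $\mathscr U$ is the system $\overline{Q}_mf=0$, and a short computation using only the bracket relations \eqref{eq:Y-bracket} gives the factorization $Q_{l+1}\overline{Q}_{l+1}=\sum_{j=1}^4Y_{4l+j}^2+8\left(\mathbf i\,\partial_{t_1}+\mathbf j\,\partial_{t_2}+\mathbf k\,\partial_{t_3}\right)$; summing over $l=0,\dots,n-2$ and eliminating $\mathbf i\,\partial_{t_1}+\mathbf j\,\partial_{t_2}+\mathbf k\,\partial_{t_3}$ by means of $\overline{Q}_0f=0$ yields \eqref{eq:regular-sublap} at once. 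All of the ``interplay between right multiplications and the group law'' that you are trying to encode in your unproven identity is already packaged in the commutators \eqref{eq:Y-bracket}; if you wish to salvage your route, the cleanest repair is to abandon the Euclidean bookkeeping ($\Delta'_{q_l}$, $R$, $M_{j,l}$) and use this factorization of the sub-Laplacian through the operators $\overline{Q}_{l+1}$ instead.
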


By using parabolic maximum principle, we show  that an $  H^1(\mathscr U )$ function $f $ is the  {heat kernel integral} of its boundary value as follows:
\begin{equation}\label{eq:heat-rep}
    f ( t,g )=\int_{\mathscr H^{n-1}}h_t(g'^{-1}\cdot g)f^b(   g') d  g',
 \end{equation}where $h_t(g)$ is the heat kernel   $e^{-\frac t{8(n-1)}\triangle_H}$ and $f^b  $ is the radial limit of $f $. Consequently, $f^b  $ belongs to boundary Hardy space $H^p(\mathscr H^{n-1})$.
  This gives us a formula reproducing regular functions directly,  as an approximation to the identity, whose kernel is controlled by  the geometry of the    quaternionic Heisenberg group nicely.

 We point out that our argument can be applied to holomorphic functions
 on Siegel upper half space in $\mathbb{C}^n$ to simplify corresponding part of Geller's proof in \cite{Gel}.

\begin{defn}
A regular function $A$ on $\mathscr U $ is a {\it regular $p$-atom} if
there is a $(p,\infty,\alpha)$-atom  $a$ on $\mathscr H^{n-1}$
  such that $A = \mathcal{P}(a) \in H^p(\mathscr U )$.
\end{defn}

\begin{defn}\label{def Hardy at on Hn}
The atomic Hardy space $H^p_{at}(\mathscr U)$ is the set of all regular  functions of the form
$$\sum_{j=1}^\infty A_j \lambda_j\quad {\rm with}\quad \lambda_j\in \mathbb{H},\quad \sum_{j=1}^\infty |\lambda_j|^p<+\infty,$$
 where each $A_j$ is a regular $p$-atom,  such a series converges to a regular function by Lemma \ref {lem:bound}. Moreover, the norm (quasinorm) of $f\in H^p_{at}(\mathscr U)$ is the infimum of $\Big(\sum_{j=1}^\infty |\lambda_j|^p\Big)^{1\over p}$ taken over all possible decomposition of $f$ in terms of $\sum_{j=1}^\infty A_j \lambda_j$.
\end{defn}
The atomic Hardy space $H^p_{at}(\mathscr U)$ is a right quaternionic vector space (cf. Remark \ref{eq:right}). Next, we have the following characterisation of the regular Hardy space $  H^p(\mathscr U )$.

\begin{thm}\label{thm2} For ${2\over3}< p\leq1$,
$H^p_{at}(\mathscr U ){=} H^p(\mathscr U )$  and they have equivalent quasi-norms.
\end{thm}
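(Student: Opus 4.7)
The plan is to prove the two continuous inclusions $H^p_{at}(\mathscr U)\hookrightarrow H^p(\mathscr U)$ and $H^p(\mathscr U)\hookrightarrow H^p_{at}(\mathscr U)$. The first is essentially a consequence of Proposition~\ref{prop0} together with $p$-subadditivity of the quasi-norm: given $f=\sum_j \lambda_j A_j$ with $A_j=\mathcal P(a_j)$ a regular $p$-atom, Proposition~\ref{prop0} yields a uniform bound $\|A_j\|_{H^p(\mathscr U)}\le C_{p,n,\alpha}$, so for $p\le 1$
\[
\|f\|_{H^p(\mathscr U)}^p \le \sum_j |\lambda_j|^p \|A_j\|_{H^p(\mathscr U)}^p \le C^p \sum_j |\lambda_j|^p.
\]
Taking the infimum over atomic decompositions yields $\|f\|_{H^p(\mathscr U)}\lesssim \|f\|_{H^p_{at}(\mathscr U)}$; convergence of the series to an actual regular function on $\mathscr U$ is supplied by Lemma~\ref{lem:bound}.

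For the reverse inclusion, let $F\in H^p(\mathscr U)$. First I would pass to the boundary: Theorem~\ref{prop:regular-sublap} shows that $F$ satisfies a heat equation with respect to the sub-Laplacian on $\mathscr H^{n-1}$, and a parabolic maximum principle argument (as indicated around~\eqref{eq:heat-rep}) produces a boundary trace $F^b\in H^p(\mathscr H^{n-1})$ with $\|F^b\|_{H^p(\mathscr H^{n-1})}\lesssim \|F\|_{H^p(\mathscr U)}$, recovering $F$ from $F^b$ via~\eqref{eq:heat-rep}. Combining this with the $L^2$ reproducing identity~\eqref{eq:Szego} applied to the vertical translates $F_\varepsilon$ of $F$ and letting $\varepsilon\downarrow 0$, the Cauchy--Szeg\H o reproducing relation extends to the form $F=\mathcal P(F^b)$ on the class under consideration. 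I would then invoke the Coifman--Weiss/Folland--Stein atomic decomposition on the homogeneous group $\mathscr H^{n-1}$ to write
\[
F^b=\sum_j \lambda_j a_j,\qquad \sum_j |\lambda_j|^p \lesssim \|F^b\|_{H^p(\mathscr H^{n-1})}^p,
\]
with each $a_j$ a $(p,\infty,\alpha)$-atom for some $\alpha=\alpha(p,Q)$ depending on $p$ and the homogeneous dimension $Q=4n+2$. Finally, applying $\mathcal P$ term by term and using Proposition~\ref{prop0} to control convergence in the quasi-norm yields $F=\sum_j \lambda_j A_j$ with $A_j=\mathcal P(a_j)$ regular $p$-atoms, whence $\|F\|_{H^p_{at}}^p\lesssim\sum_j |\lambda_j|^p \lesssim\|F\|_{H^p(\mathscr U)}^p$.

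The main obstacle is the reproducing formula $F=\mathcal P(F^b)$ for $F\in H^p(\mathscr U)$ with $p\le 1$: this cannot be read off directly from~\eqref{eq:Szego}, which is only an $L^2$ identity, and instead requires the heat-semigroup framework of Theorem~\ref{prop:regular-sublap} together with the $\varepsilon\downarrow 0$ approximation through vertical translates. A closely related point is verifying termwise continuity of $\mathcal P$ on the atomic series, for which the higher-order pointwise regularity of the Cauchy--Szeg\H o kernel from Theorem~\ref{main1} (passed through Proposition~\ref{prop0}) is essential; balancing the cancellation order $\alpha$ of the boundary atoms against $Q(1/p-1)$ in a way compatible with this regularity is precisely what pins down the threshold $p>2/3$.
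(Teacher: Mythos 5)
Your first inclusion ($H^p_{at}(\mathscr U)\hookrightarrow H^p(\mathscr U)$) is correct and is exactly the paper's argument: $p$-subadditivity plus the uniform atom bound of Proposition~\ref{prop0}, with Lemma~\ref{lem:bound} guaranteeing that the series converges to a regular function. The reverse inclusion, however, has a genuine gap, and the mechanism you propose for it is not the one that can be made to work. You assert that the parabolic maximum principle ``produces a boundary trace $F^b\in H^p(\mathscr H^{n-1})$'' and that $F$ is recovered from $F^b$ via \eqref{eq:heat-rep}; but \eqref{eq:heat-rep} is established only for $H^1(\mathscr U)$ (Proposition~\ref{prop:heat-kernel-H1}), and for $p<1$ an element of $H^p(\mathscr H^{n-1})$ is in general only a distribution, so neither a radial-limit trace nor the heat-kernel integral of $F^b$ makes sense as stated. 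What the heat-equation machinery actually yields (Proposition~\ref{prop:heat-kernel-Hp}, via the subharmonicity of $|f|^q$ for $q>2/3$, combined with the maximal-function characterisation of Folland--Stein) is the uniform slice bound $\|f(\varepsilon,\cdot)\|_{H^p(\mathscr H^{n-1})}\lesssim\|f\|_{H^p(\mathscr U)}$ of Proposition~\ref{prop:bded}; the boundary object $f^b$ is then obtained not as a trace but as a weak limit of a subsequence $f(\varepsilon_k,\cdot)$, after renorming the quasi-Banach space $H^p(\mathscr H^{n-1})$ by Lemma~\ref{lem:cnorm} so that a Banach--Alaoglu-type compactness argument applies.

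The second, and central, missing ingredient is a rigorous meaning for $F=\mathcal P(F^b)$ and for applying $\mathcal P$ term by term to the boundary atomic series: you name this as ``the main obstacle'' but do not resolve it, and Proposition~\ref{prop0} alone does not justify interchanging $\mathcal P$ with a sum that converges only in the distributional $H^p$ sense. The paper's resolution is to fix $(t,g)\in\mathscr U$ and study the evaluation functional $S_{(t,g)}(\mathfrak f)=\mathcal P(\mathfrak f)(t,g)$ on the dense subspace $H^p\cap L^1\cap L^\infty$ of $H^p(\mathscr H^{n-1})$, proving its $H^p$-continuity by running the Folland--Stein Calder\'on--Zygmund/atomic decomposition, checking that the resulting atomic series converges in $L^2$ (so $S_{(t,g)}$ may be applied termwise there), and then estimating each term by Lemma~\ref{lem:bound} together with Proposition~\ref{prop0}. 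Once $S_{(t,g)}$ extends continuously to $H^p(\mathscr H^{n-1})$, the identity $f(t,g)=S_{(t,g)}(f^b)$ follows from the $L^2$ reproducing formula applied to the bounded functions $f(\varepsilon_k+\cdot,\cdot)$ together with the weak convergence $f(\varepsilon_k,\cdot)\rightharpoonup f^b$, and substituting the atomic decomposition of $f^b$ (Theorem~\ref{lem-atom}) gives $f=\sum_k\mathcal P(a_k)\lambda_k$ pointwise. Without constructing and bounding these functionals (or an equivalent substitute), the steps ``$F=\mathcal P(F^b)$'' and ``apply $\mathcal P$ term by term'' in your outline remain unproved.
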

We would like to point out that the restriction of $   p$ comes from the subharmonicity of $|f|^p$ for $f\in H^p(\mathscr U )$  which may not   hold for $p< {2\over3}$.
 Note that the class of holomorphic   functions is unique in the sense that $|f|^p$ is subharmonic for any holomorphic   function $f$ and  any $p>0$. Since the Cauchy--Fueter operator in one  quaternionic variable is a kind of  generalized Cauchy--Riemann operator,  $|f|^p$ is not always subharmonic for $p<\frac {2}{3}  $, and so regular  $H^p(\mathscr U )$ space for such a $  p $ will be completely different from regular $H^p(\mathscr U )$ space for  $\frac {2}{3}< p \leq 1$.

For $p<1  $, it does not make sense to say that the boundary value of a regular $H^p$ function  on $\mathscr U$ belongs to the boundary Hardy space $H^p(\mathscr H^{n-1})$, because an element of $H^p(\mathscr H^{n-1})$ may be a distribution. To prove Theorem \ref{thm2}, we show that for a regular $H^p$ function $f$ with ${2\over3}< p\leq1$,  $f(\varepsilon,\cdot)\in H^p_{at}(\mathscr H^{n-1})$ for any $\varepsilon>0$  and their
 $ H^p_{at}(\mathscr H^{n-1})$ quasi-norms are uniformly bounded. Then there exists a
 subsequence $\{f(\varepsilon_k,\cdot) \}$  weakly convergent to some $f^b$ in $ H_{at}^p(\mathscr H^{n-1}) $. On the other hand,
for $\mathfrak f \in L^1(\mathscr H^{n-1})\cap L^\infty (\mathscr H^{n-1})$ and fixed $(t,g)\in\mathscr U$, we can show that 
\begin{equation*}
   S_{(t,g)}(\mathfrak f ):=\mathcal P(\mathfrak f ) (t,g)
\end{equation*} is  well defined and can be extended to a continuous linear functional on $H_{at}^p(\mathscr H^{n-1})$. The regular atomic decomposition will follow from $S_{(t,g)}(f^b) =\lim_{k\to\infty}S_{(t,g)}f(\varepsilon_k,\cdot)$ and substituting  atomic decomposition of $f^b$ on the boundary.

We note that for domains in $\mathbb C^n$, the method of obtaining atomic decomposition for the holomorphic Hardy space via the atomic decomposition on the boundary was first used by Krantz--Li \cite{KL1}, but our techniques here are different from \cite{KL1} in many aspects. 

\subsubsection{\textbf{Singular value estimates for commutators $[b, \mathcal P]$}}

Recall that in \cite{CDLWW} we have obtained the characterisations of boundedness and compactness of commutators $[b, \mathcal  P]$ for $b\in$ BMO space and VMO space respectively. Based on our Theorem \ref{main3}, we provide a deeper study on the singular value estimates commutators $[b, \mathcal  P]$. Recall that in the classical setting, characterisations of singular values and the Schmidt decomposition of commutator of Hilbert transform and Riesz transforms were first studied by Peller in $\mathbb R$ \cite{P} (see also \cite{P2}), then developed by Janson--Wolff in $\mathbb R^n$, $n\geq2$ \cite{JW}, and later on by Rochberg--Semmes \cite{RS0,RS}, via the Schatten classes and the Besov spaces. We summarise the known results  as follows. Let $H$ denote the Hilbert transform and let $R_\ell$ denote the $\ell$-th Riesz transform on $\mathbb R^n$.
\begin{itemize}
\item[(1)] If $n=1$ and $0< p<\infty$, then $[b,H]$ is in Schatten class $S^p$ if and only if the symbol $b$ is in the Besov space $B_{p,p}^{1/p}(\mathbb R)$ \cite{P,P2}.
\item[(2)] Suppose $n\geq2$ and $b\in L^1_{{\rm loc}}(\mathbb R^n)$. When $p>n$, $[b,R_{\ell}]\in S^p$ if and only if $b\in B_{p,p}^{{n}/{p}}(\mathbb R^n)$; when $0<p\leq n$, $[b,R_{\ell}]\in S^p$ if and only if $b$ is a constant \cite{JW,RS}.
\end{itemize}

The Janson--Wolff inequality has close connection to a quantised derivative of Alain Connes introduced in \cite[IV]{Con} in terms of  the (weak) Schatten norm of the commutator \cite{LMSZ}.
A similar Janson--Wolff phenomenon has been demonstrated by Feldman and  Rochberg  \cite{FR} for the Cauchy--Szeg\H o projection on the unit ball and  Heisenberg group via the Hankel operators. Here we establish the analogous result of Feldman and  Rochberg in our setting, based on our main result, Theorem \ref{main3}, and on the recent breakthrough of Fan, Lacey and Li \cite{FLL}, since the theory of Fourier analysis and Hankel operator is not as effective as in the Euclidean setting or the classical Heisenberg group setting.

 \begin{defn}
Let $f\in L_{\rm loc}^{1}(\mathscr H^{n-1})$. Then we say that $f$ belongs to Besov space $B_{p,p}^{4n+2\over p}(\mathscr H^{n-1})$ if
\begin{align*}
{\int_{\mathscr H^{n-1}}\frac{\|f(g \cdot)-f(\cdot)\|_{L^{p}(\mathscr H^{n-1})}^{p}}{\rho(g)^{8n+4}}dg<\infty.}
\end{align*}
\end{defn}

We also recall the definition of the Schatten class $S^{p}$. Note that if $T$ is any compact operator on $L^{2}(\mathscr H^{n-1})$, then $T^{*}T$ is compact, positive and therefore diagonalizable. For $0<p<\infty$, we say that $T\in S^{p}$ if $\{\lambda_{n}\}\in \ell^{p}$, where $\{\lambda_{n}\}$ is the sequence of square roots of eigenvalues of $T^{*}T$ (counted according to multiplicity).

Our main theorem on Schatten class commutators is the following.
\begin{thm}\label{schatten}
Suppose that $0<p<\infty$ and $b\in  L^1_{{\rm loc}}(\mathscr H^{n-1})$. Then

{\rm (1)} for $p>4n+2$, $[b, \mathcal P]\in S^p$ if and only if $b\in B_{p,p}^{\frac{4n+2}{p}}(\mathscr H^{n-1})$;

{\rm (2)} for $0<p\leq 4n+2$, $[b, \mathcal P]\in S^p$ if and only if $b$ is a constant.
\end{thm}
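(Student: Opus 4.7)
The plan is to adapt the recent framework of Fan--Lacey--Li \cite{FLL} on Schatten class Riesz commutators to the sub-Riemannian setting of the quaternionic Heisenberg group, using the tile decomposition $\tile$ of Section 3 (after Strichartz \cite{Str}) as the substitute for dyadic cubes and the pointwise kernel lower bound of Theorem \ref{main3} as the substitute for the homogeneity of the Riesz kernel. Both directions are treated through matrix representations of $[b,\mathcal P]$ in a Haar-type system $\{h_T\}_{T \in \tile}$ built from normalized differences of characteristic functions of sub-tiles. On $\mathscr H^{n-1}$ this basis is nearly weakly orthogonal in the sense of Rochberg--Semmes \cite{RS0,RS}, and, after standard Littlewood--Paley type arguments on spaces of homogeneous type, the Besov norm $\|b\|_{B_{p,p}^{Q/p}}^p$ is comparable to a weighted $\ell^p$ sum of the Haar coefficients $\langle b, h_T\rangle$ at all scales.

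For the sufficiency in part (1), I would decompose $b = c + \sum_T \langle b,h_T\rangle h_T$ and write $[b,\mathcal P] = \sum_T \langle b,h_T\rangle [h_T,\mathcal P]$. The operator $[h_T,\mathcal P]$ has integral kernel $(b(g)-b(\hat g))K(g,\hat g)$ tightly localized around $T$, and the higher-order regularity estimate of Theorem \ref{main1} supplies the off-diagonal decay needed to estimate its Schatten $p$-norm. Combined with the $L^{2}$-boundedness of $\mathcal P$ and the $\min(p,1)$-subadditivity of $\|\cdot\|_{S^{p}}$, this yields $\|[b,\mathcal P]\|_{S^{p}}^{p} \lesssim \|b\|_{B_{p,p}^{Q/p}}^{p}$, where the assumption $p > Q = 4n+2$ is precisely what renders the off-diagonal summation over scales convergent. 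For the necessity in part (1), Theorem \ref{main3} associates to each $T \in \tile_{j}$ a companion tile $\hat T \in \tile_{j}$ and an index $i \in \{1,2,3,4\}$ for which the real component $K_{i}$ is of constant sign on $T \times \hat T$ and of magnitude $\gtrsim 2^{-Qj} \approx |T|^{-1}$. Taking $\phi_{T} = |T|^{-1/2}\chi_{T}$ and $\psi_{\hat T} = |\hat T|^{-1/2}\chi_{\hat T}$ and pairing against the unit quaternion $\mathbf e_{i} \in \{1,\mathbf i,\mathbf j,\mathbf k\}$ that isolates $K_{i}$ gives
$$\bigl|\langle [b,\mathcal P]\phi_{T}, \mathbf e_{i}\psi_{\hat T}\rangle\bigr| \gtrsim \frac{1}{|T|}\int_{T}\int_{\hat T}|b(g) - b(\hat g)|\, d\hat g\, dg \gtrsim |\langle b,h_{T}\rangle|,$$
where the sign rigidity of $K_{i}$ prevents cancellation and the last step is a Poincar\'e--Haar comparison on $T$ versus $\hat T$. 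Invoking the nearly weakly orthogonal family technique of \cite{FLL,RS} then converts a Schatten $p$-norm bound on $[b,\mathcal P]$ into the required weighted $\ell^{p}$ bound on $\{\langle b, h_{T}\rangle\}_{T}$, i.e.\ $b \in B_{p,p}^{Q/p}(\mathscr H^{n-1})$.

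For part (2), the same lower-bound argument still yields oscillation control at every scale, but when $p \leq Q$ the weighted $\ell^{p}$ sum equivalent to $\|b\|_{B_{p,p}^{Q/p}}^{p}$ can only be finite if every Haar coefficient vanishes, which forces $b$ to be constant; a complementary rescaling under the anisotropic dilations $\delta_{r}$ of $\mathscr H^{n-1}$ implements the Janson--Wolff dichotomy quantitatively. The principal obstacle is the verification of the nearly weakly orthogonal property of the Haar basis on Strichartz tiles, for which no commutative Fourier or Plancherel device is available: one must rely entirely on the metric and tiling structure of $(\mathscr H^{n-1}, d)$ together with the sign rigidity of Theorem \ref{main3}(c). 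The necessity of selecting a single real component $K_{i}$ out of four is a genuinely quaternionic feature, absent in the complex Heisenberg setting of \cite{FR}, and requires a careful indexing of tile pairs $(T, \hat T)$ by the four admissible sign patterns before the summation over scales can be carried out uniformly.
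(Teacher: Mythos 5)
Your overall strategy --- transplant the Fan--Lacey--Li scheme to $\mathscr H^{n-1}$ using the Strichartz tiles, with Theorem \ref{main1} supplying the upper kernel estimates and Theorem \ref{main3} the sign-rigid lower bound --- is exactly the route of the paper, which for part (1) simply invokes \cite{FLL} together with Theorems \ref{main1} and \ref{main3}. But three steps in your write-up do not work as stated. First, in the sufficiency direction the inequality $\|[b,\mathcal P]\|_{S^p}^p\lesssim\sum_T|\langle b,h_T\rangle|^p\,\|[h_T,\mathcal P]\|_{S^p}^p$ via ``$\min(p,1)$-subadditivity'' fails: for $p>4n+2>1$ the Schatten quasi-norm is a genuine norm, and termwise estimates only yield an $\ell^1$ sum of the coefficients, not the $\ell^p$ sum that gives the Besov norm; the $\ell^p$ bound must come from the nearly-weakly-orthogonal machinery of Rochberg--Semmes/FLL applied to a decomposition of the kernel of $[b,\mathcal P]$ itself (using Theorem \ref{main1}), not from summing $\|[h_T,\mathcal P]\|_{S^p}$ --- and note the kernel of $[h_T,\mathcal P]$ is $(h_T(g)-h_T(\hat g))K(g,\hat g)$, which is not localized near $T$ in both variables. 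Second, in the necessity direction the displayed inequality $|\langle[b,\mathcal P]\phi_T,\mathbf e_i\psi_{\hat T}\rangle|\gtrsim |T|^{-1}\int_T\int_{\hat T}|b(g)-b(\hat g)|\,d\hat g\,dg$ is false as written: constancy of the sign of $K_i$ on $T\times\hat T$ removes cancellation coming from the kernel, but not from the factor $b(g)-b(\hat g)$, which changes sign; one must first split $T$ (or $\hat T$) according to a median of $b$ so that the symbol difference has a fixed sign, which is exactly the selection step inside the FLL argument and the purpose served, at the critical exponent, by the sub-tile construction of Lemma \ref{signlemma}.

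Third, part (2) is where the paper supplies genuinely new ingredients, and your sketch skips them. The symbol is only assumed to be in $L^1_{\rm loc}$, so neither ``all Haar coefficients vanish'' nor any pointwise gradient dichotomy can be invoked directly; moreover the correct mechanism is not that individual coefficients must vanish, but that at the critical exponent $p=4n+2$ (to which $p<4n+2$ is reduced via $S^p\subset S^{4n+2}$) a point of non-vanishing horizontal gradient contributes a fixed amount per scale, so the critical $\ell^{4n+2}$ sum over the roughly $2^{kQ}$ tiles near that point at each small scale diverges. The paper implements this by mollifying $b$ with $\psi_\epsilon$ and using right-translates $\tau^h$ of the tiling to pass the oscillation bound to $b*\psi_\epsilon$ (Lemma \ref{const}), by the stratified Taylor formula (Lemma \ref{taylor}) together with the sign-separation of sub-tiles (Lemma \ref{signlemma}) to obtain $|T|^{-1}\int_T|b*\psi_\epsilon-(b*\psi_\epsilon)_{T'}|\gtrsim {\rm width}(T)\,|\nabla_H (b*\psi_\epsilon)(g_0)|$ (Lemma \ref{lowerbound}), and then by the tile count to force $\nabla_H(b*\psi_\epsilon)\equiv 0$ for every $\epsilon$, hence $b$ constant. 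Without a substitute for this mollification-plus-counting argument, your part (2) remains an assertion of the Janson--Wolff dichotomy rather than a proof of it.
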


This paper is organised as follows. In Section 2 we provide the necessary preliminaries on
quaternionic Siegel upper half space and the quaternionic Heisenberg group. In Section 3 we give proofs of our main results Theorems \ref{main1}---\ref{main3} and Proposition \ref{prop0}. The application on regular Hardy space on the quaternionic Siegel upper half space will be provided in Section 4 and 5. The application on singular value estimates of the commutator of Cauchy--Szeg\H{o} projection will be given in Section 6.

\section{The flat model of the quaternionic  Siegel upper half space}

The boundary $\partial \mathcal U $ can be identified with the quaternionic Heisenberg group
 $\mathscr H^{n-1}$, which is the  space $\mathbb R^{4n-1}$ equipped with the multiplication given by
\begin{align}\label{law}
({\bm{t}},{\bm{y}})\cdot({\bm{t}'}, {\bm{y}}')=\bigg(\bm t+\bm t'+B(\bm y, \bm y'), {\bm{y}}+{\bm{y}}' \bigg),
\end{align}
\color{black}
where
${\bm{t}}=(t_1, t_2, t_3)$, ${\bm{t}}'=(t'_1, t'_2, t'_3)\in\mathbb R^3$,  ${\bm{y}}=(y_1,y_2,\cdots, y_{4n-4})$, ${\bm{y}}'=(y'_1,y'_2,\cdots, y'_{4n-4})\in\mathbb R^{4n-4}$, $B(\bm y, \bm y')=(B_1(\bm y, \bm y'),B_2(\bm y, \bm y'),B_3(\bm y, \bm y'))$, and
$$B_\alpha(\bm y, \bm y')=2\sum_{l=0}^{n-2}\sum_{j,k=1}^4b_{kj}^\alpha y_{4l+k}y'_{4l+j},\quad \alpha=1,2,3,$$
with
\begin{equation}\label{eq:b}
b^1:=\left( \begin{array}{cccc}
0&1 & 0 & 0\\
-1 & 0 & 0&0\\
0 & 0 & 0&-1\\
0 & 0 &1&0
\end{array}
\right ),
\quad
b^2:=\left( \begin{array}{cccc}
0&0 &1 & 0\\
0 & 0 & 0&1\\
-1 & 0 & 0&0\\
0 &-1 & 0 &0
\end{array}
\right ),
\quad
b^3:=\left( \begin{array}{cccc}
0&0 & 0 &1\\
0 & 0 &-1&0\\
0 &1 & 0&0\\
-1 & 0 & 0 &0
\end{array}
\right ).
\end{equation}
The identity element of $\mathscr H^{n-1}$ is $0\in\mathbb R^{4n-1}$, and the inverse element of $(\bm t, \bm y)$ is $(-\bm t, -\bm y)$.
 For any $g=(\bm t, \bm y)\in \mathscr H^{n-1}$, the homogeneous norm of $g$ is defined by
$$ \|g\|:= (|{\bm{y}}|^4+|\bm t|^2 )^{1\over 4}.$$
 According to \cite{Cy}, for any $g, g'\in \mathscr H^{n-1}$,  $d(g, g'):=\| g'^{-1}\cdot g\| $ is a distance.  We define  balls  on $\mathscr H^{n-1}$ by   $B(g,r):=\{ g'\mid d(g, g')<r \}$.

The quaternionic Heisenberg group $\mathscr H^{n-1}$ is a homogeneous group with dilations
$$\delta_r(\bm t, \bm y)=(r^2\bm t, r \bm y),\quad r>0.$$
Then for any measurable set $E\subset\mathscr H^{n-1}$,
$$|\delta_r(E)|=r^{Q}|E|,$$
 where $Q=4n+2$ is the homogeneous dimension of $\mathscr H^{n-1}$.
Denote by $\tau_h$ the left translation by $h$, i.e. $$\tau_h(g)=h\cdot g.$$
\color{black}

 The following $4n-1$ vector fields are left invariant on $\mathscr H^{n-1}$:
 \begin{align}\label{Y}
 Y_{4l+j} &={\partial\over \partial y_{4l+j}} + 2 \sum_{\alpha=1}^3\sum_{k=1}^4 b_{kj}^\alpha y_{4l+k} {\partial\over\partial t_\alpha},\quad l=0,\cdots, n-2, ~j=1,\cdots, 4,\\
 T_\alpha&={\partial\over\partial t_\alpha},\quad \alpha=1,2,3.\nonumber
 \end{align}
They form a basis for the Lie algebra of left-invariant vector field on $\mathscr H^{n-1}$.
The only nontrivial commutator relations are
\begin{equation}\label{eq:Y-bracket}
   [Y_{4l+k}, Y_{4l'+j}]=4\delta_{ll'}\sum_{\alpha=1}^3b_{kj}^\alpha{\partial\over \partial t_\alpha}, \quad l, ~l'=0,\ldots ,n-2; ~j,~k=1,\ldots,  4.
\end{equation}
 For convenience, we set $Y_{4n-4+\alpha}:=T_\alpha$,  $y_{4n-4+\alpha }:=t_\alpha$, $~\alpha=1,2,3$.  The standard sub-Laplacian on
 $\mathscr H^{n-1}$ is defined by $\triangle_H=\sum_{j=1}^{4n-4}Y_j^2$. For any multi-index   $I=(\alpha_1,\cdots, \alpha_{4n-1})\in\mathbb N^{4n-1}$, we set $Y^I:=Y_1^{\alpha_1}\cdots Y_{4n-1}^{\alpha_{4n-1}}$, and further set
 \begin{align}\label{di}
 d(I):=\sum_{j=1}^{4n-4}\alpha_j+\sum_{k=4n-3}^{4n-1}2\alpha_k ,\qquad |I|:=\sum_{j=1}^{4n-1}\alpha_j, \end{align}
which are called the  homogeneous  degree and topology degree of the differential $Y^I$,
respectively.

\color{black}

We can   identify the Siegel upper half space with $\mathscr U=\mathbb{R}_+\times\mathscr H^{n-1}$ by the following quadratic diffeomorphism:
  \begin{equation}    \label{eq:pi}
 \begin{split}
\pi:  \mathcal U  &\longrightarrow \mathscr U  ,\\
(q_1  , q')&\longmapsto \left (q_1-|q'|^2 , q'\right).
 \end{split} \end{equation}
The diffeomorphism $\pi$ in (\ref{eq:pi}) induces an isomorphism of Hardy spaces
\begin{equation}\label{eq:isomorphism}\begin{split}
   H^p(\mathscr U  )&\longrightarrow   H^p(\mathcal{U} ),\\ f &\mapsto F(q_1  , q' )=f(q_1-|q'|^2 , q'),
\end{split}\end{equation}
with $\|\cdot\|$ preserved.

  \begin{prop}  \label{prop:regular-equiv} An $\mathbb{H}$-valued function $f$ is regular on $\mathcal U $ if and only if $F:=f\circ \pi^{-1}$ satisfies
  \begin{equation}\label{eq:regular}
      \overline{Q}_mF=0,\qquad m=0,\ldots,n-1,
  \end{equation}
on $ \mathscr U$,  where
  \begin{equation*}\begin{split}
  \overline{Q}_0:&=\partial_t +
 \mathbf{i}\partial_{t_{1}}
 + \mathbf{j}\partial_{t_{2}}+ \mathbf{k}\partial_{t_{3}},\\
   \overline{Q}_{l+1 }:&= Y_{4 l +1}+
 \mathbf{i}Y_{4 l +2}
 + \mathbf{j}Y_{4 l  +3}+ \mathbf{k}Y_{4 l +4} ,\quad  l=0,\ldots,n-2.
 \end{split} \end{equation*}
\end{prop}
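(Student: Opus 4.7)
The plan is to prove the equivalence by a direct chain-rule computation under the quadratic diffeomorphism $\pi$ of \eqref{eq:pi}, and then to recognize the resulting expressions in terms of the left-invariant vector fields on $\mathscr H^{n-1}$. Under $\pi$ the coordinates on $\mathscr U$ are $t=x_1-|q'|^2$, $t_\alpha=x_{\alpha+1}$ for $\alpha=1,2,3$, and $y_{4l+j}=x_{4l+4+j}$ for $l=0,\ldots,n-2$, $j=1,\ldots,4$. Since $|q'|^2$ depends only on the $y$-coordinates, the chain rule delivers
\[
   \partial_{x_1}=\partial_t,\qquad \partial_{x_{\alpha+1}}=\partial_{t_\alpha},\qquad
   \partial_{x_{4l+4+j}}=\partial_{y_{4l+j}}-2\,y_{4l+j}\,\partial_t.
\]

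The first Cauchy--Fueter equation $\overline\partial_{q_1}f=0$ involves only $\partial_{x_1},\ldots,\partial_{x_4}$, so under the chain rule it transforms verbatim into $\overline Q_0 F=0$. For the remaining equations, set $(\mathbf e_1,\mathbf e_2,\mathbf e_3,\mathbf e_4):=(1,\mathbf i,\mathbf j,\mathbf k)$ and $\tilde q_l:=\sum_{j=1}^4 y_{4l+j}\mathbf e_j\in\mathbb H$. Substituting the chain rule into $\overline\partial_{q_{l+2}}f$ (for $l=0,\ldots,n-2$) produces a linear combination of the $\partial_{y_{4l+j}}F$ together with an unwanted term proportional to $\partial_t F\cdot\tilde q_l$. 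To match $\overline Q_{l+1}F$, which by \eqref{Y} involves only $\partial_{y_{4l+j}}$ and the center derivatives $\partial_{t_\alpha}$ and never $\partial_t$, one uses the already-derived first equation $\overline Q_0 F=0$, namely $\partial_t F=-\sum_{\alpha=1}^3 \mathbf e_{\alpha+1}\partial_{t_\alpha}F$, to replace $\partial_t F$ by center derivatives.

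After this substitution the proof reduces to the purely algebraic identity
\[
   \tilde q_l\,\mathbf e_{\alpha+1}\;=\;\sum_{j,k=1}^4 b^{\alpha}_{kj}\,y_{4l+k}\,\mathbf e_j,\qquad \alpha=1,2,3,
\]
which asserts that the matrices $b^1,b^2,b^3$ of \eqref{eq:b} are precisely the matrix representatives of (right) quaternion multiplication by $\mathbf i,\mathbf j,\mathbf k$ in the basis $\{\mathbf e_j\}$. This identity is verified by a direct calculation of the twelve products $\mathbf e_k\mathbf e_{\alpha+1}$, and matches precisely the coefficient structure in the definition of $Y_{4l+j}$. Combining this with the chain-rule step recovers $\overline Q_{l+1}F=\sum_j \mathbf e_j\,Y_{4l+j}F$, completing the forward implication. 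The converse is the same computation run in reverse: $\overline Q_0 F=0$ gives $\overline\partial_{q_1}f=0$ directly, and then reinserting $\partial_t F$ into $\overline Q_{l+1}F=0$ by the same substitution restores $\overline\partial_{q_{l+2}}f=0$. The main technical step—and the only place where all sign conventions must be tracked carefully—is the matrix identification above; once it is in hand, everything else is routine chain rule and rearrangement.
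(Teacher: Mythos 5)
Your proof is correct and takes essentially the same route as the paper: your chain-rule computation combined with eliminating $\partial_t F$ via $\overline{Q}_0F=0$ is exactly the paper's operator identity $\pi^{-1}_*\overline{Q}_{l}=\overline{\partial}_{q_{l+1}}+2q_{l+1}\overline{\partial}_{q_{1}}$ in rearranged form, and your right-multiplication identity for the matrices $b^\alpha$ is precisely the computation in \eqref{2.8}. No gaps to report.
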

\begin{proof}  The proof is similar to   \cite[Proposition 5.1]{Shi}.
The inverse of $\pi$ is
\begin{equation}\label{eq:pi-1}\begin{split}
 \pi^{-1}:\mathscr U &\longrightarrow \mathcal U ,\\
     ( t,\bm{t}, {\bm{y}})&\mapsto (x_1,\ldots,x_{4n})=\left(t+ |{\bm{y}}|^2 ,\bm{t},{\bm{y}}\right),
 \end{split}\end{equation}
  where   $\bm{t}=(t_1,t_2,t_3)$. Note that $\overline{\partial}_{q_{l+1 }}+2 q_{l+1 }
\overline{\partial}_{q_{ 1}}$ is a vector field tangential to the hypersurface $\partial \mathcal U $,  since by definition,  we have
$$\left(\overline{\partial}_{q_{l+1}}+2 q_{l+1 }
\overline{\partial}_{q_{ 1}}\right)\rho=0,$$ $l= 1,\cdots,n -1,$
where $\rho $ is the defining function (\ref{eq:boundary}) of $  \partial\mathcal{ U }$. Denote 
{ $\widetilde{q}_{l+1} := y_{4l-3} + y_{4l-2}\mathbf i + y_{4l-1}\mathbf j + y_{4l}\mathbf  k$}, $l = 1, \ldots , n-1$, and $\overline{\partial}_{\widetilde{q}_{l+1}}$ is defined as in (\ref{eq:CF}).
  We  claim that
\begin{equation}\label{eq:pi-Q0}
   \pi^{-1}_*\left(\overline{\partial}_{\widetilde{q}_{l+1}}+2\widetilde{ q}_{l+1 }
\overline{\partial}_{\bm{t}}\right) = \overline{\partial}_{q_{l+1}}+2 q_{l+1 }
\overline{\partial}_{q_{ 1}}
\end{equation}where $\overline{\partial}_{\bm{t}}:=\textbf{i}{\partial}_{t_{1}}+
\textbf{j}{\partial}_{t_{2}}+\textbf{k}
{\partial}_{t_{3}} $, and
\begin{equation}\label{eq:pi-Q1}
   \overline{\partial}_{\widetilde{q}_{l+1}}+2 \widetilde{q}_{l+1 }
\overline{\partial}_{\bm{t}}= \overline{Q}_{l },
\end{equation}$l= 1,\cdots,n -1 $.
      Namely, we have
\begin{equation}\label{eq:pi-Q}
  \pi^{-1}_*   \overline{Q}_{l } = \overline{\partial}_{q_{l+1}}+2 q_{l+1 }
\overline{\partial}_{q_{ 1}} .
\end{equation}

It follows from the definition of $\pi^{-1}$ in
(\ref{eq:pi-1})  that
\begin{equation*}\begin{split}
    \pi^{-1}_*\partial_{t_{\beta}}&=\partial_{x_{ 1+\beta}},\qquad\qquad \qquad \qquad   \beta=1,2,3,
    \\ \pi^{-1}_*\partial_{y_{4(l-1)+j}}&=\partial_{x_{4l+j}}+ 2x_{4l+j} \cdot
\partial_{x_{ 1}},\qquad   j=1,\cdots,4,\quad l=1,\cdots,n-1,
\end{split}\end{equation*}since $x_{4l+j}=y_{4(l-1)+j}$.
Then
\begin{equation*}\begin{aligned}
\pi^{-1}_*\left(\overline{\partial}_{\widetilde{q}_{l+1}}+2\widetilde{q}_{l+1}
\overline{\partial}_{\bm{t}}\right)=&\sum_{j=1}^4\textbf{i}_{j }\left(
{\partial}_{x_{4l+j}}+2x_{4l+j}
{\partial}_{x_{ 1}}\right)+2
 q_{l+1} \left(\textbf{i}{\partial}_{x_{ 2}}+
\textbf{j}{\partial}_{x_{ 3}}+\textbf{k}
{\partial}_{x_{ 4}}\right)\\=&
\overline{\partial}_{q_{l+1}}+2 q_{l+1}
\overline{\partial}_{q_{ 1}}
,
\end{aligned}\end{equation*}
if we denote  $\textbf{i}_1 :=1  $, $\textbf{i}_2: =\textbf{i}  $, $\textbf{i}_3 :=\textbf{j}  $, $\textbf{i}_4: =\textbf{k}  $. (\ref{eq:pi-Q0}) is proved.

To show (\ref{eq:pi-Q1}), consider right multiplication by  $\textbf{i}_\beta $. Noting  that
\begin{align*}
(x_1+x_2\textbf{i}+x_3\textbf{j}+x_4\textbf{k})\textbf{i}
=-x_2+x_1\textbf{i}+x_4\textbf{j}-x_3\textbf{k},\\
(x_1+x_2\textbf{i}+x_3\textbf{j}+x_4\textbf{k})\textbf{j}
=-x_3-x_4\textbf{i}+x_1\textbf{j}+x_2\textbf{k},\\
(x_1+x_2\textbf{i}+x_3\textbf{j}+x_4\textbf{k})\textbf{k}
=-x_4+x_3\textbf{i}-x_2\textbf{j}+x_1\textbf{k},
\end{align*}
we can write
\begin{align}\label{2.8}
(x_1+x_2\textbf{i}+x_3\textbf{j}+x_4\textbf{k})\textbf{i}_\beta= -(b^\beta x)_1-(b^\beta x)_2\textbf{i}-(b^\beta x)_3\textbf{j}-(b^\beta x)_4\textbf{k}=- \sum_{j=1}^4(b^\beta x)_j\textbf{i}_{j },
\end{align}
where $b^\beta$'s are given by (\ref{eq:b}), and $x$ is the column vector $(x_1,x_2,x_3,x_4)^t$.  Then we have \begin{equation*}\begin{aligned}
 \widetilde{q}_{l+1} \overline{\partial}_\mathbf t=& 
 {\left( y_{4(l-1)+1} +\textbf{i} y_{4(l-1)+2} +
\textbf{j} y_{4(l-1)+3} +\textbf{k} y_{4(l-1)+4} \right)}
\left(
\textbf{i}\partial_{t_{1}}+
\textbf{j}\partial_{t_{2}}+\textbf{k}\partial_{t_{3}}\right)\\=&
 \sum_{\beta=1}^3
\sum_{j,k=1}^{4}b_{jk}^\beta y_{4(l-1)+k} \textbf{i}_{j }
\partial_{t_{\beta}},
\end{aligned}\end{equation*}
and so
\begin{equation}\label{eq:Xj}\begin{split}\overline{\partial}_{\widetilde{q}_{l+1}}+ 2\widetilde{q}_{l+1}\overline{\partial}_{\bm{t}}
& =\sum_{j=1}^4\textbf{i}_{j }
{\partial}_{y_{4(l-1)+j}}+\sum_{j=1}^4\textbf{i}_{j }\sum_{\beta=1}^3
\sum_{ k=1}^{4}b_{k j}^\beta y_{4(l-1)+k}
\partial_{t_{\beta}}\\&
=
   Y_{4(l-1)+1}+\mathbf{i} Y_{4(l-1)+2}+\mathbf{j} Y_{4(l-1)+3}+\mathbf{k} Y_{4(l-1)+4},
\end{split}\end{equation}
by the antisymmetry of $b^\beta$'s.
It is also direct to see that
\begin{equation*}
   \pi^{-1}_*( \partial_{t}+
 \mathbf{i}\partial_{t_1}
 + \mathbf{j}\partial_{t_2}+ \mathbf{k}\partial_{t_3})=
\overline{\partial}_{q_{ 1}}.
\end{equation*}

Now the equivalence follows from
\begin{equation*}\begin{split}
 \left.   \overline{Q}_{l }\left(f\circ \pi^{-1}\right)\right|_{\pi(q)}&=\left.\left(\pi^{-1}_* \overline{Q}_{l }\right)f\right|_{q} =\left.\left(\overline{\partial}_{q_{l+1}}+2q_{l+1}
\overline{\partial}_{q_{ 1}} \right )f\right|_{q}=0,
\end{split}\end{equation*}by (\ref{eq:pi-Q}),
and $\left.( \partial_{t}+
 \mathbf{i}\partial_{t_1}
 + \mathbf{j}\partial_{t_2}+ \mathbf{k}\partial_{t_3})(f\circ \pi^{-1})\right|_{\pi(q)}=\left.\overline{\partial}_{q_{ 1}} f\right|_{q}=0$ similarly.
\end{proof}
\begin{rem}\label{eq:right}
   If $f$ is regular, then $f\lambda$ for any $\lambda\in \mathbb{H}$ is also regular. This is because $\overline{Q}_j(f\lambda)=\overline{Q}_j(f)\lambda=0$ by the associativity of quaternions. Thus the Hardy space $H^p(\mathscr U)$ is a right quaternionic vector space.
\end{rem}

By applying the reproducing
formula \eqref{eq:Szego0} to $ F(t+ |{\bm{x}}|^2 ,\bm{t}, {\bm{x}} )  =f( t,\bm{t}, {\bm{x}})$, we get   the reproducing
formula \eqref{eq:Szego} on $\mathscr U=\mathbb{R}_+\times\mathscr H^{n-1}$ with the reproducing kernel given by
\begin{equation}\label{eq:K}
 K((t,g),  g'):=S((t+|{\bm{x}}|^2+\bm{t},{\bm{x}}),(|{\bm{y}}|^2+ \mathbf{s},{\bm{y}})),
\end{equation}
where $g=(\bm{t},{\bm{x}}),  g'=(\mathbf{s},{\bm{y}})\in \mathscr{H}^{n-1}$.

The invariance of the Cauchy--Szeg\H o kernel \cite[Proposition 5.1]{CMW} has the following form.

\begin{prop}\label{prop:Inv-kernel} The Cauchy--Szeg\H o kernel has following invariance properties.
\begin{align*}
&
K((t, \tau_h(g)),  \tau_h(g'))=K((t, g),  g'),\\&
K(  (t, \mathbf{a}g), \mathbf{a} g')=K((t, g),  g'), \\&
K( (r^2 t, \delta_r( g)), \delta_r( g'  )  )r^{Q}=K((t, g),  g'),
\end{align*}
for $g, g'\in \mathscr H^{n-1},$
where $h\in \mathscr H^{n-1}$,
 $\mathbf{a}\in {\rm Sp}(n-1)$    and $t,r>0$.
\end{prop}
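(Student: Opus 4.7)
The plan is to transfer the three invariance properties of the Cauchy--Szeg\H o kernel $S(q,p)$ on $\mathcal U$ established in \cite[Proposition 5.1]{CMW} to $K$, via the defining identity \eqref{eq:K} and the quadratic diffeomorphism $\pi$ of \eqref{eq:pi} (together with its natural extension to $\partial\mathcal U$). The overarching principle is that each of the three symmetries of $\mathscr U=\mathbb{R}_+\times\mathscr H^{n-1}$ listed in the proposition conjugates through $\pi$ to a symmetry of $\mathcal U$ already known to leave $S$ (quasi-)invariant, so that plugging the conjugated symmetries into \eqref{eq:K} yields the claimed identities.

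First I would identify the conjugates explicitly. The dilation $\delta_r(\bm t,\bm y)=(r^2\bm t,r\bm y)$ on $\mathscr H^{n-1}$, paired with $t\mapsto r^2 t$ on $\mathbb R_+$, is conjugated by $\pi$ to the anisotropic dilation $(q_1,q')\mapsto(r^2 q_1,rq')$ on $\mathcal U$, which rescales the defining function $\rho$ by $r^2$. The $\mathrm{Sp}(n-1)$-action on $\bm x$ (together with the induced action on $\bm t$) is conjugated to the standard $\mathrm{Sp}(n-1)$-action on $q'\in\mathbb H^{n-1}$ fixing $q_1$, which preserves $|q'|^2$. For left translation by $h=(\bm s_0,\bm y_0)\in\mathscr H^{n-1}$, a direct computation using the group law \eqref{law} and the explicit form of $\pi^{-1}$ in \eqref{eq:pi-1} shows that the conjugated map acts on $\mathcal U$ as a boundary Heisenberg translation of the form $(q_1,q')\mapsto(q_1+c_h+2\overline{y_0}\,q',\,q'+y_0)$ for a suitable $c_h\in\mathbb H$ determined by $h$.

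Having identified these conjugates, applying \cite[Proposition 5.1]{CMW}, which asserts that $S$ is invariant under boundary Heisenberg translations and under the $\mathrm{Sp}(n-1)$-action and transforms with weight $r^{-Q}$ under the anisotropic dilation, finishes the three cases simultaneously through \eqref{eq:K}. The main technical step is the translation identity: one has to reconcile the twisting $B(\bm y_0,\bm x)$ in the Heisenberg group law \eqref{law} with the quaternionic twisting $2\overline{y_0}\,q'$ appearing in the boundary translation of $\mathcal U$, carefully accounting for the quadratic terms $|\bm x|^2,\,|\bm y|^2$ built into $\pi^{-1}$. This is essentially the group-law analogue of the Lie-algebra computation performed in the proof of Proposition \ref{prop:regular-equiv} (compare \eqref{2.8} and \eqref{eq:Xj}), where the antisymmetry of the matrices $b^\beta$ plays the same role; the other two invariances are then formal consequences of the homogeneity of $\pi$ and the unitarity of the $\mathrm{Sp}(n-1)$-action.
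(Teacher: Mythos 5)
Your proposal is correct and is essentially the paper's own route: the paper presents this proposition as the restatement of the invariance of $S$ from \cite[Proposition 5.1]{CMW} in the flat coordinates, i.e.\ precisely the transfer through \eqref{eq:K} and the diffeomorphism $\pi$ that you carry out (with the conjugated translations, ${\rm Sp}(n-1)$-action and dilations identified explicitly). The extra detail you supply on matching the twisting $B(\bm y_0,\bm x)$ with the quaternionic term $2\overline{y_0}\,q'$ is consistent with the choice of the matrices $b^\beta$ in \eqref{eq:b} and does not change the argument.
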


\section{Proofs of main results}

\subsection{Proof of Theorem \ref{main1}}

Recall that a function $P$ on a homogeneous Lie group $G$ is called a polynomial if $P\circ \exp$ is a polynomial on its Lie algebra $\mathfrak g$, where $\exp:\mathfrak g \longrightarrow G$ is the exponential mapping. On the  quaternionic Heisenberg group $\mathscr H^{n-1}$, this definition of polynomial coincides with usual one, i.e. polynomials on the  Euclidean space $\mathbb R^{4n-1}$. This is because the  exponential mapping is easily seen to be the identity mapping in this case as follows.
The exponential map $ \exp: \mathbb R^{4n-1}\longrightarrow \mathscr H^{n-1}$ is
\begin{equation*}
 u\mapsto \exp \bigg(\sum_{j=1}^{4n-1} u_j Y_{j } \bigg)(0).
\end{equation*}
$ \exp u$ is given by $\phi^u(1)$, where $\phi^u$ is the integral curve of the vector field $  \sum_{j=1}^{4n-1} u_j Y_{j }  $ starting from the origin. If we write $Y_{j } :=\sum_{m=1}^{4n-1}\mathscr B_{m j}(y){\partial\over\partial y_m}$,
then
\begin{equation*}
\sum_{j=1}^{4n-1} \left. u_j Y_{j }\right|_{\phi^u  (t)} = \sum_{j,m=1}^{4n-1}\mathscr B_{m j}(\phi^u  (t))u_j{\partial\over\partial y_m}
\end{equation*}
and
$\phi^u=( \phi^u_1,\ldots, \phi^u_{4n-1})^t$  is  the solution to
 the Cauchy problem
\begin{equation*}\left\{\begin{split}& \dot{\phi }^u _m(t)=\sum_{j=1}^{4n-1}\mathscr B_{mj}(\phi_u  (t)) u_j,  \qquad m=1,\ldots, 4n-1, \\
&\phi^u  (0)=0
    \end{split}  \right.
\end{equation*}
  (cf. \cite[\S 1.3.4]{BLU} for the exponential map of a homogeneous Lie group).  By the expression of $Y_j$'s in (\ref{Y}), we see that
\begin{equation*}\left\{\begin{split}& \dot{\phi }^u_1(t)=  u_1 ,\\
  &\qquad\vdots  \\
  & \dot{\phi }^u_{4n-4}(t)=  u_{4n-4},  \\
  & \dot{\phi }^u_{4n-4+\alpha}(t)=  u_{4n-4+\alpha}+ 2\sum_{l=0}^{  n-2}  \sum_{j,k=1}^4 b_{kj}^\alpha \phi_{4l+k}^u(t)  u_{4l+j},\qquad \alpha=1,2,3,
 \end{split}\right.\end{equation*}The unique solution is
$
   \phi^u _j (t)= u_jt
$, $j=1,\cdots, 4n-1$, since $(b_{kj}^\alpha   )$  are  skew symmetric. Namely, $ \exp(u)= u$ and the exp is trivial.

Now
every polynomial on $\mathscr H^{n-1}$ can be written uniquely as
$$P=\sum a_I\xi^I,\quad (\xi^I=y^{i_1}_1\cdots y^{i_{4n-4}}_{4n-4}t_1^{i_{4n-3}}t_2^{i_{4n-2}}t_3^{i_{4n-1}}),$$
where all but finitely many of coefficients $a_I$ vanish.
 We call a function $\varphi$ nonisotropic homogeneous of degree $m$ if
$$\varphi\left(\delta_r({\bm{t}}, {\bm{y}}) \right)=r^m\varphi({\bm{t}}, {\bm{y}}),$$
for any $({\bm{t}}, {\bm{y}})\in \mathscr H^{n-1}$ and $r>0$, where $\delta_r({\bm{t}}, {\bm{y}})=(r^2{\bm{t}}, r{\bm{y}})$.  Then
 $\xi^I$ is of homogeneous degree $d(I)=\sum_{j=1}^{4n-4}i_j+2\sum_{k=4n-3}^{4n-1}i_k$. The homogeneous degree of $P$ is defined to be $\max\{d(I): a_I\neq 0\}$.

\begin{proof}[Proof of Theorem \ref {main1}]
  Note that $K((1,0),g)=s(|{\bm{y}}|^2+1-\bm{t})$ for $g=(\bm{t},\bm y)\in \mathscr H^{n-1}\setminus \{0\}$.
We claim that
\begin{equation}\label{ss}
s(\sigma)={P_{2n-1}(x)\over |\sigma|^{4n}},\quad \sigma=x_1+x_2\mathbf i +x_3\mathbf j+x_4 \mathbf k,
\end{equation}
where $P_{2n-1}(x)=P_{2n-1}(x_1,x_2,x_3,x_4)$ is some usual homogeneous polynomial of degree $2n-1$ on $\mathbb{R}^4$.

In fact,
by induction, we can see
\begin{align*}\label{derivative}
{\partial^{l}\over \partial x_1^{l}}\bigg({ 1\over |\sigma|^4} \bigg)
 ={P_l(x)\over |\sigma|^{4+2l}},
\end{align*}
where
 $P_l$ is some quaternionic homogeneous polynomial of degree $l$ on $\mathbb R^4$.
Therefore, by (\ref{s}), we have
\begin{align*}
s(\sigma)
&= c_{n-1}{\partial^{2(n-1)}\over \partial x_1^{2(n-1)}}\bigg({ 1\over |\sigma|^4} \bigg)  {\overline \sigma} +
{(2n-2)}c_{n-1}{\partial^{2n-3}\over \partial x_1^{2n-3}}\bigg({ 1\over |\sigma|^4}\bigg) \\
&={P_{2n-2}(x)\over |\sigma|^{4(n-1)+4}}\overline \sigma+{P_{2n-3}(x)\over |\sigma|^{2(2n-3)+4}}\overline \sigma\\
&={P_{2n-1}(x)\over |\sigma|^{4n}}.
\end{align*}

Next, we claim that for any $g=(\bm{t}, {\bm{y}})\in \mathscr H^{n-1}$, $I=(\alpha_1,\cdots, \alpha_{4n-1})\in\mathbb N^{4n-1}$, we have
\begin{equation}\label{YH}
Y^Is\big(|{\bm{y}}|^2+1+\bm{t}\big)={H_{4n-2+4|I|-d(I)}\left({\bm{t}}, {\bm{y}}\right)\over\left [\left(|{\bm{y}}|^2+1\right)^2+|{\bm{t}}|^2\right]^{2n+|I|}},
\end{equation}
for some  nonisotropic polynomial $H_{4n-2+4|I|-d(I)}$ with homogeneous degree  $4n-2+4|I|-d(I)$.

In fact, by \cite[(2.8)]{CDLWW} and \eqref{ss}, we can see, for $g=(\bm{t}, {\bm{y}})\in\mathscr H^{n-1}$,
\begin{align*}
s\big(|{\bm{y}}|^2+1+\bm{t}\big)&={P_{2n-1}\left(|{\bm{y}}|^2+1, t_1, t_2, t_3\right)\over \left[\left(|{\bm{y}}|^2+1\right)^2+|{\bm{t}}|^2\right]^{2n}}
={\sum\limits_{|\gamma|=2n-1}C_\gamma(|{\bm{y}}|^2+1)^{\gamma_1} t^{\gamma_2}_1 t^{\gamma_3}_2t^{\gamma_4}_3\over \left[\left(|{\bm{y}}|^2+1\right)^2+|{\bm{t}}|^2\right]^{2n}}\\
&={\sum\limits_{|\gamma|=2n-1}\sum\limits_{k=0}^{\gamma_1}A_\gamma\tbinom{\gamma_1}{k}|{\bm{y}}|^{2k}t^{\gamma_2}_1 t^{\gamma_3}_2t^{\gamma_4}_3
\over \left[\left(|{\bm{y}}|^2+1\right)^2+|{\bm{t}}|^2\right]^{2n}}\\
&=:  {H_{4n-2}({\bm{t}}, {\bm{y}})\over \left[\left(|{\bm{y}}|^2+1\right)^2+|{\bm{t}}|^2\right]^{2n} } ,
\end{align*}
for some   nonisotropic polynomial $H_{4n-2}$ with homogeneous degree $4n-2$. Then
\begin{align*}
Y_{4l+j} s\big(|{\bm{y}}|^2+1+{\bm{t}}\big)
&=\frac{{\partial\over \partial y_{4l+j}} H_{4n-2}({\bm{t}}, {\bm{y}})+2\sum\limits_{\alpha=1}^3\sum\limits_{k=1}^4b_{kj}^{\alpha}y_{4l+k}{{\partial\over \partial t_{\alpha}}}H_{4n-2}({\bm{t}}, {\bm{y}})}{\left[\left(|{\bm{y}}|^2+1\right)^2+|{\bm{t}}|^2\right]^{2n}}\\
&\qquad -\frac{2nH_{4n-2}({\bm{t}}, {\bm{y}})}{\left[\left(|{\bm{y}}|^2+1\right)^2+|{\bm{t}}|^2\right]^{2n+1}}\left( 4|{\bm{y}}|^2y_{4l+j}+4\sum_{\alpha=1}^3\sum_{k=1}^4b_{kj}^{\alpha}y_{4l+k}t_{\alpha}\right)\\
& =:\frac{H_{4n+1}({\bm{t}}, {\bm{y}})}{[(|{\bm{y}}|^2+1)^2+|{\bm{t}}|^2]^{2n+1}}
\end{align*}
for some  nonisotropic polynomial $H_{4n+1}$ with homogeneous degree  $ 4n+1 $.
And
\begin{align*}
T_{\alpha}s\big(|{\bm{y}}|^2+1+{\bm{t}}\big)
&={\partial\over\partial t_\alpha}\left\{{H_{4n-2}({\bm{t}}, {\bm{y}})\over \left[\left(|{\bm{y}}|^2+1\right)^2+|{\bm{t}}|^2\right]^{2n}}\right\}\\
&=\frac{{\partial\over \partial t_\alpha} H_{4n-2}({\bm{t}}, {\bm{y}})}{\left[\left(|{\bm{y}}|^2+1\right)^2+|{\bm{t}}|^2\right]^{2n}}-\frac{4nt_\alpha H_{4n-2}({\bm{t}}, {\bm{y}})}{\left[\left(|{\bm{y}}|^2+1\right)^2+|{\bm{t}}|^2\right]^{2n+1}}\\
&=:\frac{H_{4n}({\bm{t}}, {\bm{y}})}{\left[\left(|{\bm{y}}|^2+1\right)^2+|{\bm{t}}|^2\right]^{2n+1}}
\end{align*}
for some   nonisotropic polynomial $H_{4n}$ with homogeneous degree  $ 4n$.

Now assume that
$$Y^Is(|{\bm{y}}|^2+1+{\bm{t}})=\frac{H_{4n-2+4|I|-d(I)}({\bm{t}}, {\bm{y}})}{\left[\left(|{\bm{y}}|^2+1\right)^2+|{\bm{t}}|^2\right]^{2n+|I|}}$$
for some nonisotropic polynomial $H_{4n-2+4|I|-d(I)}$ of degree $4n-2+4|I|-d(I)$. Then
\begin{align*}
 Y_{4l+j}Y^Is(|{\bm{y}}|^2+1+{\bm{t}})
&=\frac{{\partial\over \partial y_{4l+j}} H_{4n-2+4|I|-d(I)}({\bm{t}}, {\bm{y}})+2\sum\limits_{\alpha=1}^3\sum\limits_{k=1}^4b_{kj}^{\alpha}y_{4l+k}{{\partial\over \partial t_{\alpha}}}H_{4n-2+4|I|-d(I)}({\bm{t}}, {\bm{y}})}{\left[\left(|{\bm{y}}|^2+1\right)^2+|{\bm{t}}|^2\right]^{2n+|I|}}\\
&\quad -\frac{2nH_{4n-2+4|I|-d(I)}({\bm{t}}, {\bm{y}})}{[(|{\bm{y}}|^2+1)^2+|{\bm{t}}|^2]^{2n+|I|+1}}\Big( 4|{\bm{y}}|^2y_{4l+j}+4\sum_{\alpha=1}^3\sum_{k=1}^4b_{kj}^{\alpha}y_{4l+k}t_{\alpha}\Big)\\
&=\frac{H_{4n+1+4|I|-d(I)}({\bm{t}}, {\bm{y}})}{\left[\left(|{\bm{y}}|^2+1\right)^2+|{\bm{t}}|^2\right]^{2n+|I|+1}},
\end{align*}
where $l=0,\cdots, n-2$, $j=1,\cdots, 4.$
And
\begin{align*}
T_{\alpha}Y^Is(|{\bm{y}}|^2+1+{\bm{t}})
&={\partial\over\partial t_\alpha}\left\{{H_{4n-2+4|I|-d(I)}({\bm{t}}, {\bm{y}})\over \left[\left(|{\bm{y}}|^2+1\right)^2+|{\bm{t}}|^2\right]^{2n+|I|}}\right\}\\
&=\frac{{\partial\over \partial t_\alpha} H_{4n-2+4|I|-d(I)}({\bm{t}}, {\bm{y}})}{\left[\left(|{\bm{y}}|^2+1\right)^2+|{\bm{t}}|^2\right]^{2n+|I|}}-\frac{2(2n+|I|)t_\alpha H_{4n-2+4|I|-d(I)}({\bm{t}}, {\bm{y}})}{\left[\left(|{\bm{y}}|^2+1\right)^2+|{\bm{t}}|^2\right]^{2n+|I|+1}}\\
&=:\frac{H_{4n+4|I|-d(I)}({\bm{t}}, {\bm{y}})}{\left[\left(|{\bm{y}}|^2+1\right)^2+|{\bm{t}}|^2\right]^{2n+|I|+1}}.
\end{align*}
The claim is proved by induction.

By \eqref{YH} we can see
\begin{align*}
\left|Y^Is(|{\bm{y}}|^2+1+{\bm{t}})\right|&={\left|H_{4n-2+4|I|-d(I)}({\bm{t}}, {\bm{y}})\right|\over \left[\left(|{\bm{y}}|^2+1\right)^2+|{\bm{t}}|^2\right]^{2n+|I|}}
\leq\frac{C\sum\limits_{m=0}^{4n-2+4|I|-d(I)}\|({\bm{t}}, {\bm{y}})\|^{ m}}{\left[\left(|{\bm{y}}|^2+1\right)^2+|{\bm{t}}|^2\right]^{2n+|I|}}\\
&\leq C\frac{1}{\left[\left(|{\bm{y}}|^2+1\right)^2+|{\bm{t}}|^2\right]^{n+{1\over 2}+{1\over 4}d(I)}}\\
&\leq  C{1\over (\|g\|+1)^{4n+2+d(I)}}.
\end{align*}

The proof of Theorem  \ref{main1} is complete.
\end{proof}

\subsection{Proof of Proposition \ref{prop0}}

To begin with, we first recall the atoms and atomic Hardy space on $\mathscr H^{n-1}$, which is a special case in \cite{FoSt}.
\begin{defn}\label{def atom on Hn}
 We say a quaternion  valued function  $f$ on $\mathscr H^{n-1}$ is a {\it $(p,\infty,\alpha)$-atom}  if\\
{\rm (i)}  there exists a ball $B(g_0, r)$ such that $f\in L^\infty(B(g_0, r))$ and  $\operatorname{supp} f\subset B(g_0, r)$;\\
{\rm (ii)} $\|f\|_{L^\infty(\mathscr H^{n-1})} \leq \left| B(g_0, r)\right|^{ -{1\over p}}$;\\
{\rm (iii)} $\int_{\mathscr H^{n-1}}P(g)f(g)dg=0$,\\
for any quaternion  polynomial $P$ of homogeneous degree $\leq\alpha$, where $\alpha\geq [Q({1\over p}-1)]$, $Q=4n+2$ is the homogeneous dimension of $\mathscr H^{n-1}$ (it is equivalent to the vanishing for any scalar polynomials $P$ of homogeneous degree $\leq\alpha$).
\end{defn}

\begin{defn}\label{def Hardy at on Hn}
The atomic Hardy space $H^p_{at}(\mathscr H^{n-1})$ is the set of all quaternion  valued  tempered distributions of the form  $$\sum_{j=1}^\infty  f_j\lambda_j\quad {\rm with}\quad \lambda_j\in \mathbb{H},\quad \sum_{j=1}^\infty |\lambda_j|^p<+\infty,$$
$($the sum converges in the topology of $\mathcal{S}'$$)$, where each $f_j$ is a $(p,\infty,\alpha)$-atom.
Moreover, the norm (quasinorm) of $f\in H^p_{at}(\mathscr H^{n-1})$ is the infimum of $\Big(\sum_{j=1}^\infty |\lambda_j|^p\Big)^{1\over p}$ taken over all possible decomposition of $f$ in the form $f=\sum_{j=1}^\infty f_j \lambda_j$.
\end{defn}
 The atomic Hardy space $H^p_{at}(\mathscr H^{n-1})$ is a right quaternionic vector space.

\color{black}
\begin{prop}\label{prop2}
For each positive integer $k$, there exist $C_k>0$ and $b>0$ such that for all quaternion valued function  $f \in C^{k+1}(\mathscr H^{n-1})$ and all $g, g'\in \mathscr H^{n-1}$,
$$|f(g\cdot g')-P( g')|\leq C_k\| g'\|^{k+1} \sup_{\substack{ \|h\|\leq b^{k+1}\| g'\|,\\ d(I)=k+1}}
\left| Y^{I}f (g\cdot h)\right|,$$
where $P$ is the Taylor polynomial of $f$ at $g$ of homogeneous degree $k$.
 \end{prop}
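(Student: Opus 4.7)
The plan is to establish Proposition~\ref{prop2} as a Taylor-type remainder inequality on the homogeneous Lie group $\mathscr H^{n-1}$, adapting the classical Folland--Stein approach to the quaternionic Heisenberg setting.

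First, by the left-invariance identity $Y^I(f\circ L_g) = (Y^I f)\circ L_g$, the inequality at $(g, g')$ for $f$ is equivalent to the case $g = 0$ for $f\circ L_g$, so it suffices to prove
\begin{equation*}
|f(g') - P(g')| \leq C_k \|g'\|^{k+1} \sup_{\substack{\|h\|\leq b^{k+1}\|g'\|,\\ d(I)=k+1}} |Y^{I}f(h)|,
\end{equation*}
with $P$ the Taylor polynomial of $f$ at $0$ of homogeneous degree $k$.

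Next, I introduce the dilation path $\gamma(s) = \delta_s(g') = (s^2\bm t, s\bm y)$, $s\in[0,1]$, and set $\phi(s) = f(\gamma(s))$. Using the conversion $\partial_{y_{4l+j}} = Y_{4l+j} - 2\sum_{\alpha,k} b^\alpha_{kj} y_{4l+k} T_\alpha$ and the antisymmetry of $b^\alpha$ to annihilate a quadratic cross term, one computes
\begin{equation*}
\phi'(s) = \bigl((A + sB)f\bigr)(\gamma(s)), \quad A = \sum_j y_j Y_j, \quad B = 2\sum_\alpha t_\alpha T_\alpha,
\end{equation*}
with constant coefficients $y_j, t_\alpha$ taken from $g' = (\bm t, \bm y)$. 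Since $T_\alpha = \partial_{t_\alpha}$ and each $Y_j$ has only $y$-dependent coefficients, a direct check gives $[Y_j, T_\alpha]=0$, hence $[A, B] = 0$. By induction on $m$ exploiting this commutativity,
\begin{equation*}
\phi^{(m)}(s) = \sum_{I\,:\,d(I)\geq m} c_{I,m}\, s^{d(I)-m}\, y^{I_y} t^{I_t}\, (Y^I f)(\gamma(s))
\end{equation*}
with universal constants $c_{I,m}$, where $I=(I_y, I_t)$ and $d(I) = |I_y|+2|I_t|$. In particular $\phi^{(m)}(0)$ is supported on $d(I)=m$, so $\sum_{m=0}^k \phi^{(m)}(0)/m!$ is a polynomial in $g'$ of homogeneous degree $\leq k$ whose $Y^J$-derivatives at $0$ match those of $f$ for $d(J)\leq k$, and therefore equals $P(g')$ by uniqueness of the Taylor polynomial. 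The 1D Taylor integral formula then gives
\begin{equation*}
f(g') - P(g') = \frac{1}{k!}\int_0^1 (1-s)^k \phi^{(k+1)}(s)\, ds.
\end{equation*}

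To bound the remainder, note $|s^{d(I)-k-1} y^{I_y} t^{I_t}| \leq \|g'\|^{d(I)}$ on $s\in[0,1]$ and $\|\gamma(s)\|\leq\|g'\|$, so the $d(I)=k+1$ terms contribute exactly the desired $\|g'\|^{k+1} \sup_{d(I)=k+1} |Y^I f|$ bound. \textbf{The main obstacle} is the higher-degree contributions $d(I)>k+1$, whose coefficients $\|g'\|^{d(I)}$ exceed the target $\|g'\|^{k+1}$ when $\|g'\|>1$. To resolve this, I would combine a rescaling argument with an induction on $k$: under $f\mapsto f\circ\delta_\lambda$ the identity $Y^I(f\circ\delta_\lambda)(h) = \lambda^{d(I)}(Y^I f)(\delta_\lambda h)$ together with the matching degree-$k$ scaling of $P$ shows that both sides transform by the same power of $\lambda$, reducing the problem to $\|g'\|=1$. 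In this normalized regime, the sum of higher-$d(I)$ terms is finite (bounded by $d(I)\leq 2(k+1)$ from the structure of $\phi^{(k+1)}$), and each such term is absorbed into the $d(I)=k+1$ sup on the enlarged ball of radius $b^{k+1}\|g'\|$ by iteratively applying the case $k-1$ to $Y^{I_0} f$ for suitable sub-multi-indices $I_0$ with $d(I_0) = d(I)-(k+1)$. This iterative absorption is the technical heart of the proof and dictates the exponent in the factor $b^{k+1}$.
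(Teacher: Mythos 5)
Your dilation-path computation is correct as far as it goes, but it does not prove Proposition \ref{prop2}, and the gap sits exactly where you place it. Writing $\phi(s)=f(g\cdot\delta_s(g'))$, the $(k+1)$-st derivative contains terms $c_{I}\,s^{d(I)-k-1}y^{I_y}t^{I_t}(Y^If)(\gamma(s))$ with $k+1< d(I)\le 2(k+1)$ (already for $k=1$ one gets $y_jt_\alpha\,Y_jT_\alpha f$ and $t_\alpha t_\beta\,T_\alpha T_\beta f$, of homogeneous degrees $3$ and $4$), whereas the statement allows only derivatives of homogeneous degree exactly $k+1$ on the right-hand side. Your proposed repair does not close this. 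Rescaling to $\|g'\|=1$ is harmless but irrelevant: the obstruction is not the size of the factor $\|g'\|^{d(I)}$ but the fact that $\sup_{d(I)>k+1}|Y^If|$ simply cannot be pointwise dominated by $\sup_{d(J)=k+1}|Y^Jf|$ on a comparable ball --- take $k=1$ and $f(\bm t,\bm y)=t_1t_2$ near the origin, where $T_1T_2f\equiv1$ while every derivative of homogeneous degree $2$ tends to $0$. Moreover, ``applying the case $k-1$ to $Y^{I_0}f$'' only controls the deviation of $Y^{I_0}f$ from \emph{its own} Taylor polynomial, not the size of $Y^If$ itself, and it produces derivatives of $f$ of homogeneous degree $d(I_0)+k$, which is not $k+1$ in general; so the ``iterative absorption'' step, which you correctly identify as the technical heart, fails as described. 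What your argument genuinely yields is the weaker homogeneous Taylor inequality whose remainder is summed over all $I$ with $|I|\le k+1$ and $d(I)\ge k+1$ --- essentially the Bonfiglioli-type estimate recalled in Lemma \ref{taylor} --- not the stratified estimate asserted in Proposition \ref{prop2}.

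The missing idea is the stratified (horizontal-path) argument: because the horizontal fields $Y_1,\dots,Y_{4n-4}$ generate the Lie algebra, one joins $0$ to $g'$ by a bounded number of horizontal segments of length $\lesssim\|g'\|$ and iterates one-dimensional Taylor expansions along them, so that only derivatives of topological order $k+1$ in horizontal directions, i.e.\ homogeneous degree exactly $k+1$, ever appear; this is also where the enlarged ball of radius $b^{k+1}\|g'\|$ comes from. That is the content of the scalar stratified Taylor inequality \cite[Corollary 1.44]{FoSt}, and the paper's own proof is far shorter than your route: it does not reprove that inequality at all, but simply writes $f=f_1+f_2\mathbf i+f_3\mathbf j+f_4\mathbf k$, applies \cite[Corollary 1.44]{FoSt} to each real component $f_l$ with its left Taylor polynomial $P^l$, sets $P=P^1+P^2\mathbf i+P^3\mathbf j+P^4\mathbf k$, and combines the four estimates (using $|Y^If_l|\le|Y^If|$) to get the quaternionic statement with constant $4C_k$. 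Either adopt that reduction, or replace your dilation path by the horizontal-path induction; as it stands, your proof has a genuine gap.
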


\begin{proof}
Recall that for a real function  $f$ on   $\mathscr H^{n-1}$,
the  left Taylor polynomial of $f$ at $g$ with  homogeneous degree $k$ is the unique  polynomial $P$ of $f$   with homogeneous degree $k$ such that
$Y^IP(0)=Y^If(g)$ for all $d(I)\leq k$.
Write
$$f(g\cdot g')=f_1(g\cdot g')+f_2(g\cdot g')\mathbf i +f_3(g\cdot g')\mathbf j+f_4(g\cdot g') \mathbf k.$$
Then, by \cite[Corollary 1.44]{FoSt}, there exist $C_k>0$ and $b>0$ such that for each $f_l$, $l=1,2,3,4$,
$$\left|f_l(g\cdot g')-P^{l}( g') \right|\leq C_k\| g'\|^{k+1}\sup_{\substack{\|h\|\leq b^{k+1}\| g'\|\\d(I)=k+1}}
\left| Y^{I}f_{l}(g\cdot h)\right|,\quad l=1,2,3,4,$$
where $P^{l}$ is the left Taylor polynomial of $f_l$ at $g$ with homogeneous degree $k$.
Let
$$P( g')=P^1( g')+P^2( g')\mathbf i +P^3( g')\mathbf j+P^4( g')\mathbf k.$$
Then we have
\begin{equation}
\begin{split}
\left|f(g\cdot g')-P( g')\right|&=\left( \sum_{l=1}^4\left|f_l(g\cdot g')-P^{l}( g') \right|^2\right)^{1\over 2}
\leq  C_k\sum_{l=1}^4\| g'\|^{k+1}\sup_{\substack{\|h\|\leq b^{k+1}\| g'\|,\\ d(I)=k+1}}
 \left| Y^{I}f_{l}(g\cdot h)\right|  \\
 &\leq 4C_k\| g'\|^{k+1}\sup_{\substack{\|h\|\leq b^{k+1}\| g'\|,\\ d(I)=k+1}}
 \left| Y^{I}f (g\cdot h)\right|.
\end{split}
\end{equation}

The proof of Proposition \ref{prop2} is complete.
\end{proof}

\begin{lem}\label{lem1}
$\mathcal P(a)(1,\cdot)$ is uniformly bounded on $L^p(\mathscr H^{n-1})$, $0<p\leq 1$, for any $(p,\infty,\alpha)$-atom $a$.
\end{lem}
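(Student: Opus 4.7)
The plan is to bound $\|\mathcal P(a)(1,\cdot)\|_{L^p(\mathscr H^{n-1})}^p$ uniformly in the atom $a$ by a standard near/far decomposition relative to the support ball $B(g_0,r)$ of $a$. Proposition~\ref{prop:Inv-kernel} gives left translation invariance of $K$ in its spatial arguments, so without loss of generality I may assume $g_0=0$. I then split the $L^p$-integral as $\int_{B(0,cr)}+\int_{B(0,cr)^c}$, with $c:=2b^{\alpha+1}$ chosen to match the constant produced by the Taylor inequality of Proposition~\ref{prop2}.

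For the near piece, the idea is H\"older plus $L^{2}$-boundedness. Since $\mathcal P$ is the orthogonal projection $L^{2}(\mathscr H^{n-1})\to H^{2}(\mathscr U)$, one has $\|\mathcal P(a)(1,\cdot)\|_{L^{2}}\leq \|\mathcal P(a)\|_{H^{2}(\mathscr U)} \leq\|a\|_{L^{2}}\lesssim r^{Q/2-Q/p}$. Applying H\"older with exponents $(2/p, 2/(2-p))$ on the ball $B(0,cr)$ of volume $\sim r^{Q}$ produces a uniform $O(1)$ bound in which the $r$-powers cancel exactly.

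The far piece is the main work. For $g$ with $d(g,0)\geq cr$, I subtract from $K((1,g),\cdot)$ its left Taylor polynomial $P_g$ at the origin of homogeneous degree $\alpha$. The vanishing-moment condition on $a$ annihilates $P_g$, leaving
\begin{equation*}
\mathcal P(a)(1,g)=\int_{B(0,r)}\bigl[K((1,g),g')-P_g(g')\bigr]\,a(g')\,dg'.
\end{equation*}
Proposition~\ref{prop2}, applied to the function $f(\cdot)=K((1,g),\cdot)$ at the origin, bounds the bracket by $Cr^{\alpha+1}\sup_{\|h\|\leq b^{\alpha+1}r,\,d(I)=\alpha+1}|Y^I K((1,g),h)|$, where $Y^I$ acts in the second spatial variable.

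Converting this into pointwise decay in $d(g,0)$ is the step I expect to require the most care. Here I would use the identity $K((1,g),g')=K((1,0),g^{-1}\cdot g')$, which follows from the left-invariance in Proposition~\ref{prop:Inv-kernel}, together with the left-invariance of the vector fields $Y_j$, to write
\begin{equation*}
Y^I K((1,g),h)=\bigl(Y^I K((1,0),\cdot)\bigr)(g^{-1}\cdot h).
\end{equation*}
For $\|h\|\leq b^{\alpha+1}r$ and $d(g,0)\geq 2b^{\alpha+1}r$ the triangle inequality gives $\|g^{-1}\cdot h\|\geq d(g,0)/2$, so Theorem~\ref{main1} yields the decay $(d(g,0)+1)^{-(Q+\alpha+1)}$. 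Assembling these pieces,
\begin{equation*}
|\mathcal P(a)(1,g)|\lesssim r^{\alpha+1+Q-Q/p}\,(d(g,0)+1)^{-(Q+\alpha+1)},
\end{equation*}
and integrating the $p$-th power over $B(0,cr)^c$ converges precisely because $\alpha\geq[Q(1/p-1)]$ forces $p(Q+\alpha+1)>Q$. A direct check in the two regimes $r\leq 1$ and $r\geq 1$ shows that the various $r$-powers balance to a uniform constant, completing the estimate.
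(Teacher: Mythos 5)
Your argument is correct and follows essentially the same route as the paper's proof: the split into a near ball of radius comparable to $b^{\alpha+1}r$ handled by H\"older plus the $L^2$-projection bound, and a far region handled by subtracting the degree-$\alpha$ Taylor polynomial, invoking the vanishing moments, Proposition \ref{prop2}, the left-invariance identity $Y^IK((1,g),\cdot)=(Y^IK((1,0),\cdot))(g^{-1}\cdot\,\cdot)$, and the decay of Theorem \ref{main1}. The only cosmetic differences are that you normalise $g_0=0$ at the outset (the paper does this translation reduction in Proposition \ref{prop0} instead) and that you keep the $+1$ in the kernel decay and check the regimes $r\le 1$, $r\ge 1$, whereas the paper drops it and gets exact cancellation of the $r$-powers; both are fine.
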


\begin{proof}
Let  $a$ be any $(p,\infty,\alpha)$-atom with support in some ball $B=B(g_0,r)\subset \mathscr H^{n-1} $, $r>0$, and $\lambda:=\max\{2b^{\alpha+1}, 2\}$, where $b$ is the constant in Proposition \ref {prop2}.

Write
\begin{equation}\begin{split}
\int_{\mathscr H^{n-1}}\left| \mathcal P(a)(1, g)\right|^pdg
&=\int_{ B(g_0, \lambda r)}\left| \mathcal P(a)(1, g)\right|^pdg
+\int_{\mathscr H^{n-1}\setminus B(g_0, \lambda r)}\left| \mathcal P(a)(1, g)\right|^pdg
:=I_1+I_2.
\end{split}\end{equation}

For the term $I_1$, by H\" older's inequality, we have
\begin{align*}
I_1&=\int_{B(g_0, \lambda r)}\left| \mathcal P(a)(1, g)\right|^pdg
\leq\left|B(g_0, \lambda r) \right|^{1-{p\over 2}}\Big(\int_{B(g_0, \lambda r)}\left| \mathcal P(a)(1, g)\right|^2dg \Big)^{p\over 2}\\
&\leq \left|B(g_0, \lambda r) \right|^{1-{p\over 2}}\Big(\int_{\mathscr H^{n-1}}\left|a(g)\right|^2dg \Big)^{p\over 2}\\
&\leq C\left|B(g_0, \lambda r) \right|^{1-{p\over 2}}\Big( \left|B(g_0, r) \right|^{-{2\over p}+1}\Big)^{p\over 2}\\
& \lesssim 1,
\end{align*}
where the second inequality follows from the fact that $\mathcal P$ is an orthogonal projection operator from $L^2(\mathscr H^{n-1})$ to $H^2(\mathscr U)$.

For the term $I_2$,
by Proposition \ref{prop2} and the vanishing moment condition of $(p,\infty,\alpha)$-atom, we have
\begin{align*}
I_2&=\int_{\mathscr H^{n-1}\setminus B(g_0, \lambda r)}\left|\mathcal P(a)(1, g) \right|^pdg\\
&=\int_{\mathscr H^{n-1}\setminus B(g_0, \lambda r)}\left|
\int_{ {\mathscr H^{n-1}}}K((1, g),  g')a( g')d g'
 \right|^pdg\nonumber\\
 &=\int_{\mathscr H^{n-1}\setminus B(g_0, \lambda r)}\left|
\int_{B(g_0, r)}K\left((1,g),  g'\right)a( g')d g'
 \right|^pdg\nonumber\\
 &=\int_{\mathscr H^{n-1}\setminus B(g_0, \lambda r)}\left|
\int_{B(0, r)}K\left((1,g), g_0\cdot g''\right)a(g_0\cdot g'')d g''
 \right|^pdg\nonumber\\
  &=\int_{\mathscr H^{n-1}\setminus B(g_0, \lambda r)}\left|
\int_{B(0, r)}\left[K\left((1,g), g_0\cdot g''\right)-P( g'')\right]a(g_0\cdot g'')d g''
+ \int_{B(0, r)}P( g'')a(g_0\cdot g'')d g''
\right|^pdg\nonumber\\
&=\int_{\mathscr H^{n-1}\setminus B(g_0, \lambda r)}
\left|
\int_{B(0, r)}\left[K\left((1,g), g_0\cdot g''\right)-P( g'')\right]a(g_0\cdot g'')d g''
\right|^pdg\nonumber\\
&\leq \|a\|^p_{L^\infty(B(g_0,r))}\int_{\mathscr H^{n-1}\setminus B(g_0, \lambda r)}\left(
\int_{B(0, r)}\left|K\left((1,g), g_0\cdot g''\right)-P( g'')\right|d g''
\right)^pdg\nonumber\\
&\lesssim \|a\|^p_{L^\infty(B(g_0,r))}\int_{\mathscr H^{n-1}\setminus B(g_0, \lambda r)}\left(
\int_{B(0, r)}\| g''\|^{ {\alpha+1}} \sup_{\substack{ \|h\|\leq b^{\alpha+1}\| g''\|,\nonumber\\
d(I)=\alpha+1}}\left|Y^IK \left((1,g), g_0\cdot h\right)\right|d g''
\right)^pdg\nonumber\\
&\lesssim |B(g_0,r)|^{-1}r^{(\alpha+1)p}
\int_{\mathscr H^{n-1}\setminus B(g_0, \lambda r)}\left(
\int_{B(0, r)} \sup_{\substack{ \|h\|\leq b^{\alpha+1}\| g''\|,\nonumber\\
d(I)=\alpha+1}}\left|Y^IK \left((1,g), g_0\cdot h\right)\right|d g''
\right)^pdg,\nonumber
\end{align*}
where $P$ is the Taylor polynomial of $K((1,g),\cdot)$ at $g_0$ with homogeneous degree $\alpha$.

Since
$$K\left((1,g),  g'\right)=K\left((1,0), g^{-1}\cdot g'\right)=s(|{\bm{y}'}|^2+1-\bm{t}') $$
with $(\bm{t}', {\bm{y}'})=g^{-1}\cdot g'$, by left invariance, we have
$$Y^I_{( g')}\left[K\left((1,g),  g'\right) \right]=Y^I_{( g')}\left[K\left((1,0), g^{-1}\cdot g'\right)\right]
=Y^I_{(\bm{t}',{\bm{y}'})}\left[s\left(|{\bm{y}'}|^2+1-\bm{t}'\right) \right]\big |_{(\bm{t}', {\bm{y}'})=g^{-1}\cdot g'}.$$
Then, by Theorem \ref{main1}, we have
$$\left|Y^I_{( g')}\left[K\left((1,g),  g'\right) \right]\right|
\leq C d(0, g^{-1}\cdot g')^{-Q-d(I)}.$$
For any $g\in\mathscr H^{n-1}\setminus B(g_0, \lambda r)$ and $ g'\in B(g_0, b^{\alpha+1}r)$,
$$ d(0, g^{-1}\cdot g')= d(g,  g')\geq d(g, g_0)- d( g', g_0)\geq {1\over 2} d(g, g_0).$$

Consequently,
\begin{align*}
I_2&\lesssim |B(g_0,r)|^{-1}r^{(\alpha+1)p}
\int_{\mathscr H^{n-1}\setminus B(g_0, \lambda r)}\left(
\int_{B(0, r)} \sup_{\|h\|\leq b^{\alpha+1}\| g''\|}
 d(0,g^{-1}(g_0\cdot h))^{-4n-3-\alpha}d g''
\right)^pdg\\
&\lesssim |B(g_0,r)|^{-1}r^{(\alpha+1)p}|B(0,r)|^{p}
\int_{\mathscr H^{n-1}\setminus B(g_0, \lambda r)} d(g, g_0)^{(-4n-3-\alpha)p}
dg\\
&\lesssim r^{Q(p-1)+(\alpha+1)p}\int_{r}^{\infty}\int_S \tau^{(-4n-3-\alpha)p}\tau^{Q-1}d\sigma d\tau\\
&\lesssim r^{Q(p-1)+(\alpha+1)p+(-4n-3-\alpha)p+Q}\\
&\lesssim 1,
\end{align*}
where  $Q=4n+2$. This completes the proof.
\end{proof}

\begin{proof}[Proof of Proposition \ref{prop0}]
Let  $a$ be any $(p,\infty,\alpha)$-atom with support in some ball $B=B(g_0,r)\subset \mathscr H^{n-1} $ and $r>0$. By the translation invariance of Cauchy--Szeg\H o kernel in Proposition \ref{prop:Inv-kernel}, we have
\begin{align*}
\int_{\mathscr H^{n-1}}|\mathcal P(a)(\varepsilon,   h)|^pd  h
&=\int_{\mathscr H^{n-1}}|\mathcal P(a)(\varepsilon, g_0\cdot g)|^pdg\\
&=\int_{\mathscr H^{n-1}}\Big|\int_{\mathscr H^{n-1}}K((\varepsilon, g_0\cdot g),  g')a( g')d g'\Big|^pdg\\
&=\int_{\mathscr H^{n-1}}\Big|\int_{\mathscr H^{n-1}}K((\varepsilon, g_0\cdot g), g_0\cdot\tilde g')a(g_0\cdot\tilde g')d\tilde g'\Big|^pdg\\
&
=\int_{\mathscr H^{n-1}}\Big|\int_{\mathscr H^{n-1}}K((\varepsilon,g), \tilde g')a(g_0\cdot\tilde g')d\tilde g'\Big|^pdg
 \\
&=\int_{\mathscr H^{n-1}}\Big|\int_{\mathscr H^{n-1}}K((\varepsilon, g), \tilde g')\tilde a(\tilde g')d\tilde g'\Big|^pdg,
\end{align*}
where $\tilde a(\tilde g'):=a(g_0\cdot\tilde g')$ is supported on $B(0,r)$, and it is clear that
$$\|\tilde a\|_\infty=\|a\|_\infty\leq |B(g_0,r)|^{-{1\over p}}=|B(0,r)|^{-{1\over p}}.$$
 For any quaternion  polynomial $P$
of homogeneous degree $\alpha$,
since $P(g_0\cdot)$ is also a polynomial of the same order, we have
\begin{equation}\begin{split}
\int_{\mathscr H^{n-1}}P(g)\tilde a(g)dg&=\int_{\mathscr H^{n-1}}P(g)a(g_0\cdot g)dg=\int_{\mathscr H^{n-1}}P(g_0^{-1}\cdot g') a( g')d g'=0.
\end{split}\end{equation}
Therefore, $\tilde a(\tilde g')$ is a  $(p,\infty,\alpha)$-atom  centered at $0$. So it's sufficient to prove the result for $(p,\infty,\alpha)$-atom supported on $B(0,r)$.  Now by dilation invariance of Cauchy--Szeg\H o kernel in Proposition \ref{prop:Inv-kernel}, we have
\begin{align*}
\int_{\mathscr H^{n-1}}|\mathcal P(a)(\varepsilon,   h)|^pd  h
&=\int_{\mathscr H^{n-1}}\Big|\int_{\mathscr H^{n-1}}K((\varepsilon, \delta_{\sqrt{\varepsilon}} \tilde g), \delta_{\sqrt{\varepsilon}} g'')\tilde a(\delta_{\sqrt{\varepsilon}} g'')\varepsilon^{Q\over 2} d g''\Big|^p\varepsilon^{Q\over 2}d\tilde g\\
&= \int_{\mathscr H^{n-1}}\Big|\int_{\mathscr H^{n-1}}K((1, \tilde g),  g'')\tilde a(\delta_{\sqrt{\varepsilon}} g'')\varepsilon^{Q\over 2p} d g''\Big|^pd\tilde g \\
&=\int_{\mathscr H^{n-1}}\Big|\int_{\mathscr H^{n-1}}K((1, \tilde g),  g'')\tilde {\tilde a}( g'') d g''\Big|^pd\tilde g,
\end{align*}
where $\tilde {\tilde a}( g'') =\varepsilon^{Q\over 2p}\tilde a(\delta_{\sqrt{\varepsilon}} g'')$ is supported on $B(0,{r\over{\sqrt{\varepsilon}}})$ and
$$\big| \tilde {\tilde a}( g'') \big|\leq \varepsilon^{Q\over2 p} |B(0,r)|^{-{1\over p}}
=\big|B\big(0,{r\over{\sqrt{\varepsilon}}}\big)\big|^{-{1\over p}}. $$

For any quaternion  polynomial $P$
of homogeneous degree $\alpha$, 
since $P(\delta_r\cdot)$ is also a polynomial of the same order, we have
\begin{equation}\begin{split}
\int_{\mathscr H^{n-1}}P( g)\tilde {\tilde a}( g)d g
=\int_{\mathscr H^{n-1}}P( g)\varepsilon^{Q\over 2p}\tilde a(\delta_{\sqrt{\varepsilon}} g)d g
=\varepsilon^{{Q\over 2p}-{Q\over 2}}\int_{\mathscr H^{n-1}}P(\delta_{{1\over\sqrt{\varepsilon}}} g')\tilde a( g')d g'=0.
\end{split}\end{equation}
 That is to say $\tilde {\tilde a}$ is a $(p,\infty,\alpha)$-atom  supported on $B(0,{r\over{\sqrt{\varepsilon}}})$, and
\begin{equation}
\int_{\mathscr H^{n-1}}|\mathcal P(a)(\varepsilon,   h)|^pd  h
=\int_{\mathscr H^{n-1}}|\mathcal P(\tilde {\tilde a})(1, \tilde g)|^pd\tilde g.
\end{equation}
Therefore, it's sufficient to prove the result for $\varepsilon=1$ and $(p,\infty,\alpha)$-atom centered at $0$.
Then, by Lemma \ref{lem1}, we can see that
$$\int_{\mathscr H^{n-1}}\left|\mathcal P(a)(\varepsilon,  h) \right|^pd  h\leq C_{p,n,\alpha}.$$
Therefore,
$$\|\mathcal P(a)\|_{H^p(\mathscr U)}=\sup_{\varepsilon>0}\int_{\mathscr H^{n-1}}\left|\mathcal P(a)(\varepsilon,  h) \right|^pd  h\leq C_{p,n,\alpha}.$$

\color{black}
The proof of Proposition \ref{prop0} is complete.
\end{proof}

\subsection{Proof of Theorem \ref{main2}}

Note that  by direct calculation, we can see that
$$\bar{\sigma}\left(x_1+x_2{\bf i}\right)\sigma=x_1+x_2\left[\left(2y_2^2-1\right){\bf i}+2y_2y_3{\bf j}+2y_2y_4 {\bf k}\right],$$
if $\sigma=y_2{\bf i}+y_3{\bf j}+y_4{\bf k}$ with $|\sigma|=1$ (cf. \cite[(5.13)]{CMW}).
Therefore, for given $\xi=\xi_1+\xi_2{\bf i}+\xi_3{\bf j}+\xi_4\mathbf{k}\in\mathbb{H}_+$ with $\xi_3 \xi_4\neq 0$ (otherwise, we already have $\xi=\xi_1+\xi_2{\bf i}$), if we choose $x_1=\xi_1,~x_2=-|\operatorname{Im}\xi|$, where $|\operatorname{Im}\xi|:=(\xi_2^2+\xi_3^2+\xi_4^2)^{1\over 2}$, and
\begin{equation*}\left\{\begin{split}
y_2&=\bigg[ {1\over 2}\Big(1-{\xi_2\over |\operatorname{Im \xi}|} \Big)\bigg]^{{1\over 2}},\\
y_3&=-{\xi_3\over 2\left|{\operatorname{Im \xi}}\right|}\bigg[ {1\over 2}\Big(1-{\xi_2\over |\operatorname{Im \xi}|} \Big)\bigg]^{-{1\over 2}},
\\
y_4&=-{\xi_4\over 2\left|{\operatorname{Im \xi}}\right|}\bigg[ {1\over 2}\Big(1-{\xi_2\over |\operatorname{Im \xi}|} \Big)\bigg]^{-{1\over 2}},
    \end{split}  \right.
\end{equation*}which satisfy $y_2\neq0$ and $y_2^2+y_3^2+y_4^2=1$,
  then we have
$\bar{\sigma}\left(x_1+x_2{\bf i}\right)\sigma=\xi$, i.e.
$$\sigma\xi\bar\sigma=x_1+x_2{\bf i}=\xi_1-|\operatorname{Im}\xi|{\bf i}.$$
Hence, by $s(\sigma\xi\bar\sigma)=\sigma s(\xi)\bar\sigma$  for any $\sigma\in\mathbb H$ with $|\sigma|=1$  \cite[(5.11)]{CMW}, we have
\begin{equation*}\label{eq:s-xi}
   s(\xi)=\bar\sigma s(\xi_1-|\operatorname{Im}\xi|{\bf i})\sigma.
\end{equation*}
Thus  for any $g=(\bm t, \bm y)\in\mathscr H^{n-1}$, 
we have
\begin{equation}\label{eq:s-K}K(g)=s(|\bm y|^2+\bm t)=\bar\sigma s(|\bm y|^2-|\bm t|{\bf i})\sigma.\end{equation}

On the other hand, by \cite[(3.6)]{CDLWW}, for any $\alpha=x_1+x_2 {\bf i}$ with $x_1>0 $, we have
\begin{align}\label{ssigma1}
s(\alpha)
&= c_{n-1}\sum_{k=0}^{2n-2}   (2n-2)!  (2n-k-1) (k+1)    {\bar\alpha\over z^{2n-k}   \bar z^{k+2}} \\
&\quad -  c_{n-1}\sum_{k=0}^{2n-3}   (2n-2)!  (2n-k-2) (k+1)   {1\over z^{2n-k-1}  \bar z^{k+2}},\nonumber
\end{align}
where $ z =x_1+|x_2|{\bf i} $. Now  when $x_2<0 $, write $\alpha=re^{i\theta}$ with $\theta\in ({3\pi\over 2},{2\pi} )$, $r>0$. Then,
\begin{align}\label{ssigma1}
s(\alpha)=s(re^{i\theta})
&= \frac {c_{n-1}(2n-2)!}{r^{2n+1}}  \sum_{k=0}^{2n-2}   (2n-k-1) (k+1) e^{ i(2n-2k-3 )\theta}  \nonumber \\
&\quad -   \frac {c_{n-1}(2n-2)!}{r^{2n+1}} \sum_{k=0}^{2n-2}   (2n-k-2) (k+1)  e^{ i(2n-2k-3)\theta}\\
 &= \frac {c_{n-1}(2n-2)!e^{ i(2n -3)\theta}}{r^{2n+1}}  \sum_{k=0}^{2n-2}     (k+1) e^{-2 k i \theta}\nonumber   \\
  &=  c_{n-1}(2n-2)!e^{ i(2n -3 )\theta}  \frac {(2n-1)t^{2n}-2n t^{2n-1}+1}{ {r^{2n+1}}  (1-t)^2}\nonumber
\end{align}
by using the identity
\begin{equation*}
   \sum_{k=0}^{2n-2}     (k+1)t^k=\frac {(2n-1)t^{2n}-2n t^{2n-1}+1}{(1-t)^2}
\end{equation*}with $t=e^{-2   i \theta} $. Note that for $t^{2n}\neq1$,
\begin{equation*}
   | (2n-1)t^{2n}-2n t^{2n-1}+1|\geq  2n -| (2n-1)t^{2n} +1|>2n -2n=0,
\end{equation*} by $|t|=1$,  using the triangle inequality and
\begin{equation*}
   |(2n-1)(\cos\omega+i\sin\omega)+1| =[(2n-1)^2+1+2(2n-1)\cos\omega]^{\frac 12}<2n
\end{equation*}  when $\omega\neq 2k\pi$.
  But if $t^{2n}=1$,
\begin{equation*}
  (2n-1)t^{2n}-2n t^{2n-1}+1 =  (2n-1) -2n t^{ -1}+1=2n(1-e^{ 2   i \theta})\neq0,
\end{equation*}
for $\theta\in ({3\pi\over 2},{2\pi} )$.  Thus
$
s(x_1+x_2 {\bf i} )\neq0$ for $x_1>0 $, $x_2<0 $. Consequently, by \eqref{eq:s-K}, $K(g) \not=0,$ for $  g\in \mathscr H^{n-1}\setminus \{0\}.$
\qed

\color{black}

\subsection{Proof of Theorem \ref{main3}}

We use the work of \cite{Str} on self-similar tilings to find a ``nice'' decomposition of $\mathscr H^{n-1}$, analogous to the decomposition of $\mathbb R^n$ into dyadic cubes in classical harmonic analysis.
For $\bm y=(y_1,\cdots, y_{4n-4})$, we use $|\bm y|_\infty$ to denote $\max\{ |y_1|, |y_2|, \dots , |y_{4n-4} | \}$, 
while $\mathscr H^{n-1}_{\mathbb Z}$ denotes the subgroup $\{ (\bm a,\bm b) \in \mathscr H^{n-1} \mid \bm a \in \mathbb Z^{4n-4}, \bm b\in \mathbb Z^3\}$.

Denote by $A$ the basic tile
\begin{align}\label{basic tile}
A:=\left\{(\bm t,\bm y)\in\mathscr H^{n-1}\mid \bm y\in [0,1]^{4n-4}, ~ F_j(\bm y)\leq t_j<F_j(\bm y)+1, j=1,2,3 \right\},
\end{align}
where
$$F_j(\bm y)=\sum_{m=1}^\infty{1\over 4^m}B_j\big( \left[ 2^m\bm y\right]~{\rm mod} ~2, \langle2^m\bm y\rangle\big),$$
here [ , ] and $\langle\, ,\rangle$ denote the integer and fractional part functions, interpreted componentwise  ($[y]_j=[y_j]$, etc.),
and $B_j(\bm y,\bm y') $ is defined in \eqref{law}, which is bounded on $[0,1]^{4n-4}\times [0,1]^{4n-4}$.

If $\gamma=(\bm a, \bm b)\in\mathscr H^{n-1}_{\mathbb Z}$ then the image $\tau_{\gamma}(A)$ of $A$ under left translation by $\gamma$ is
$$
\tau_{\gamma}(A)=\left\{(\bm t, \bm y)\mid \bm y-\bm a\in [0,1]^{4n-4}, 0\leq t_j-b_j-B_j(\bm a,\bm y-\bm a)-F_j(\bm y-\bm a)<1, j=1,2,3\right\}.
$$

According to \cite{Str}, $\mathscr H^{n-1}=\cup_{\gamma\in \mathscr H^{n-1}_{\mathbb Z}}~ \tau_{\gamma}(A)$ is a (disjoint) tiling of $\mathscr H^{n-1}$ and it is self-similar, i.e.
\begin{align}\label{A}
\delta_{2}(A)=\bigcup_{\gamma\in\Gamma_0}\tau_{\gamma}(A),\quad {\rm or}\quad
A=\bigcup_{\gamma\in\Gamma_0}\delta_{1\over 2}\circ\tau_{\gamma}(A),
\end{align}
where
$\Gamma_0=\left\{(\bm a, \bm b): a_j=0 ~{\rm or}~1, b_j=0, 1, 2,  ~{\rm or}~ 3\right\}.$

\begin{lem}\label{lem:inner}
The tile $A$ has inner points.
\end{lem}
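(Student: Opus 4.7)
The plan is to exhibit an explicit interior point of $A$. The only subtle feature of the definition is the dependence of the $t_j$-slab on the self-similar functions $F_j$, so the whole argument reduces to pinning down a point where each $F_j$ is continuous and then opening the slab around it.

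First I would pick a base point $\bm y^* \in (0,1)^{4n-4}$ no coordinate of which is a dyadic rational; for concreteness one can take $\bm y^* = (1/3,\dots,1/3)$. The key observation is that for such a $\bm y^*$ and for each $m\ge 1$, $2^m\bm y^*$ has no integer coordinate, so there exists $\delta_m>0$ such that $[2^m\bm y]$ is constant on the Euclidean ball $B(\bm y^*,\delta_m)$. On that ball we therefore also have $\langle 2^m\bm y\rangle-\langle 2^m\bm y^*\rangle = 2^m(\bm y-\bm y^*)$, and by the bilinearity and boundedness of $B_j$ (on the bounded region $\{0,1\}^{4n-4}\times[0,1]^{4n-4}$) we get the pointwise estimate
\begin{equation*}
\Bigl|\tfrac{1}{4^m}\bigl[B_j([2^m\bm y]\mathrm{mod}\,2,\langle 2^m\bm y\rangle)-B_j([2^m\bm y^*]\mathrm{mod}\,2,\langle 2^m\bm y^*\rangle)\bigr]\Bigr| \le C\,\|\bm y-\bm y^*\|/2^m.
\end{equation*}
For $m>M$ I would instead use the trivial bound $C/4^m$ on the $m$-th term. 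Given $\varepsilon>0$, choose $M$ with $\sum_{m>M}2C/4^m<\varepsilon/2$; then for $\bm y$ in the (open) intersection of the balls $B(\bm y^*,\delta_1),\dots,B(\bm y^*,\delta_M)$ of radius less than $\varepsilon/(4C)$, summing the two estimates yields $|F_j(\bm y)-F_j(\bm y^*)|<\varepsilon$. Thus each $F_j$ is continuous at $\bm y^*$.

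With this continuity in hand, set $c_j:=F_j(\bm y^*)$ and consider $(\bm t^*,\bm y^*)$ with $\bm t^*=(c_1+\tfrac12,c_2+\tfrac12,c_3+\tfrac12)$. Choose $r>0$ small enough that $B(\bm y^*,r)\subset(0,1)^{4n-4}$ and $|F_j(\bm y)-c_j|<1/8$ on $B(\bm y^*,r)$ for $j=1,2,3$. Then for every $(\bm t,\bm y)$ in a small enough Euclidean ball around $(\bm t^*,\bm y^*)$, one has $\bm y\in B(\bm y^*,r)$ and $|t_j-c_j-\tfrac12|<1/8$, which gives $F_j(\bm y)<t_j<F_j(\bm y)+1$ strictly. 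Hence that entire ball is contained in $A$, and $(\bm t^*,\bm y^*)$ is an interior point.

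The single technical obstacle is the continuity claim for $F_j$ at a non-dyadic $\bm y^*$: although $F_j$ is discontinuous on the dense set of dyadic rationals of all levels, the oscillation splits cleanly into a local Lipschitz piece (where the integer parts are frozen and $B_j$ is linear in $\langle 2^m\bm y\rangle$) and a geometrically small tail; this is the one step that needs to be carried out carefully, but it is entirely elementary.
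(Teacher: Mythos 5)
Your proof is correct, but it follows a genuinely different route from the paper. The paper's argument avoids any continuity discussion: it works near the origin of the $\bm y$-variables, observing that for $\bm y\in(0,2^{-n_0})^{4n-4}$ with $n_0>2+[\log_4 M]$ the terms with $m<n_0$ in the series for $F_j$ vanish identically (the integer parts $[2^m\bm y]$ are zero, and $B_j(0,\cdot)=0$), so the geometric tail gives the uniform bound $|F_j(\bm y)|<\tfrac14$ on that whole box; hence the explicit open set $(0,2^{-n_0})^{4n-4}\times(\tfrac14,\tfrac34)^3$ sits inside $A$. You instead establish continuity of each $F_j$ at a point with no dyadic coordinates, splitting the oscillation into a frozen-integer-part Lipschitz piece for $m\le M$ plus a geometric tail, and then open a Euclidean ball around $(\bm t^*,\bm y^*)$ with $t_j^*=F_j(\bm y^*)+\tfrac12$; this is carried out correctly (the shrinking radii $\delta_m$ are harmless because only finitely many are used for each $\varepsilon$). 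What each buys: the paper's choice is shorter and produces the concrete box that is then used to fix the ``center'' $g_o$ of $A$ in the subsequent construction of tiles, while yours proves the stronger fact that $F_j$ is continuous at every non-dyadic $\bm y$, so $A$ has interior points over a dense set of fibers — more information than the lemma needs, at the cost of a slightly longer argument.
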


\begin{proof}
Fix a positive integer $n_0$ such that $n_0>2+[\log_4  M]$. If $\bm y\in\mathbb R^{4n-4}$ satisfies 
  $0<|\bm y|_\infty<2^{-n_0}$, then
$0<2^n y_l<1$, $l=1,\cdots, 4n-4$, and $[2^n\bm y]=0$ for $n<n_0$.

Let $M=\max_{\bm y,\bm y'\in [0,1]^{4n-4}}\left\{| B_j(\bm y,\bm y')|, j=1,2,3\right\}$, then we have
\begin{align*}
|F_j(\bm y)|\leq\sum_{n=n_0}^{\infty}{M\over 4^n}<{M\over 4^{n_0-1}}<{1\over 4},\quad j=1,2,3.
\end{align*}
This implies that
$$F_j(\bm y)<{1\over 4}\quad{\rm and}\quad F_j(\bm y)+1>1-{1\over 4}={3\over 4},\quad j=1,2,3.$$
Therefore, from the definition of $A$ as in \eqref{basic tile}, we see that
$$(0, 2^{-n_0})^{4n-4}\times \Big({1\over 4},  {3\over 4}\Big)\subset A.$$
The lemma is proved.
\end{proof}

In what follows, we denote $g_{o} :=(2^{-n_0-1}, \ldots, 2^{-n_0-1}, {1\over 2})$ as the
``center'' of $A$.

\begin{defn}\label{def:tiles}
We define
\[
\tile_0 := \{ \tau_g(A) : g \in \mathscr H^{n-1}_{\mathbb Z} \},
\qquad
\tile_j := \delta_{2^j} \tile_0
\quad\text{and}\quad
\tile := \bigcup_{j \in \mathbb Z} \tile_j .
\]
We call the sets $T \in \tile$ \emph{tiles}.
If $j \in \mathbb Z$ and $g \in \mathscr H^{n-1}_{\mathbb Z}$ and $T = \delta_{2^j} \circ\tau_g(A)$, then $T = \tau_{\delta_{2^j} (g)}( \delta_{2^j} (A))$,
and we further define
\[
{\rm cent}(T) := \delta_{2^j} \circ\tau_g( g_{o}),
\qquad
{\rm width}(T) := 2^j.
\]
\end{defn}

\begin{lem}\label{thm:Heisenberg-grid}
Let $\tile_j$ and $\tile$ be defined as above.
Then the following hold:
\begin{enumerate}
  \item for each $j \in \mathbb Z$, $\tile_j$ is a partition of $\mathscr H^{n-1}$, that is, $\mathscr H^{n-1} = \bigcup_{T \in \tile_j} T$;
  \item $\tile$ is nested, that is, if $T, T' \in \tile$, then either $T$ and $T'$ are disjoint or one is a subset of the other;
  \item for each $j \in \mathbb Z$ and $T\in\tile_j$, $T$ is a union of $2^{4n+2}$ disjoint congruent subtiles in $\tile_{j-1}$;
  \item there exists $g\in T$, such that $B(g, C_1 q) \subseteq T \subseteq B(g, C_2 q)$, where  $q = {\rm width}(T)$ for each $T \in \tile$; the constants $C_1$ and $C_2$ depend only on $\cdim$;
  \item if $T \in \tile_j$, then $\tau_g(T) \in \tile_j$ for all $g \in \delta_{2^j} (\mathscr H^{n-1}_{\mathbb Z})$, and $\delta_{2^k} (T) \in \tile_{j+k}$ for all $k \in \mathbb Z$.
\end{enumerate}
\end{lem}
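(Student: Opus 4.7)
The plan is to bootstrap from the base-level tiling $\mathscr H^{n-1} = \bigcup_{\gamma \in \mathscr H^{n-1}_{\mathbb Z}} \tau_\gamma(A)$ of Strichartz \cite{Str} and the self-similarity identity \eqref{A}, using three elementary facts. First, $\mathscr H^{n-1}_{\mathbb Z}$ is a subgroup of $\mathscr H^{n-1}$, because whenever $\bm y, \bm y' \in \mathbb Z^{4n-4}$ the bilinear forms $B_\alpha(\bm y, \bm y') = 2 \sum b^\alpha_{kj} y_{4l+k} y'_{4l+j}$ are (even) integers. Second, because each homogeneous dilation $\delta_r$ is a group homomorphism, one has the conjugation relation $\delta_r \circ \tau_g = \tau_{\delta_r g} \circ \delta_r$, and equivalently $\tau_g \circ \delta_{1/2} = \delta_{1/2} \circ \tau_{\delta_2 g}$. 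Third, balls transform predictably under these maps: $\delta_r(B(g,s)) = B(\delta_r g, rs)$ and $\tau_h(B(g,s)) = B(h \cdot g, s)$. Each of the five items then reduces to a short manipulation.

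For (1), the case $j = 0$ is Strichartz's theorem; applying the diffeomorphism $\delta_{2^j}$ to this disjoint tiling gives the partition of $\mathscr H^{n-1}$ by $\tile_j$. For (5), I write $T = \delta_{2^j} \circ \tau_g(A)$ and $h = \delta_{2^j}(g')$ with $g, g' \in \mathscr H^{n-1}_{\mathbb Z}$; the conjugation relation gives $\tau_h(T) = \delta_{2^j} \circ \tau_{g' \cdot g}(A) \in \tile_j$ by the subgroup property, while $\delta_{2^k}(T) = \delta_{2^{j+k}} \circ \tau_g(A) \in \tile_{j+k}$ is immediate. For (3), I apply $\delta_{2^j} \circ \tau_g$ to both sides of \eqref{A} and commute $\delta_{1/2}$ past $\tau_g$ to rewrite $T$ as
\begin{equation*}
T = \bigcup_{\gamma \in \Gamma_0} \delta_{2^{j-1}} \circ \tau_{\delta_2 g \cdot \gamma}(A),
\end{equation*}
which is a disjoint union of $|\Gamma_0| = 2^{4n-4} \cdot 4^3 = 2^{4n+2}$ members of $\tile_{j-1}$, noting that $\delta_2 g \cdot \gamma \in \mathscr H^{n-1}_{\mathbb Z}$ by the subgroup property.

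For (2), I iterate (3): every tile in $\tile_{j+k}$ is a disjoint union of tiles in $\tile_j$ for each $k \geq 0$, so combined with (1) any $T \in \tile_j$ and $T' \in \tile_{j+k}$ are either disjoint or satisfy $T \subseteq T'$; the case of arbitrary levels reduces to this one. For (4), Lemma \ref{lem:inner} supplies an inner point $g_{o}$ of $A$ with $B(g_{o}, C_1) \subseteq A$ for some $C_1 > 0$, while the estimate $|F_\alpha(\bm y)| \leq \sum_{m=1}^\infty M/4^m = M/3$ on $[0,1]^{4n-4}$ shows that $A$ is bounded, so $A \subseteq B(g_{o}, C_2)$ for some $C_2$ depending only on $n$. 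Pushing these inclusions forward by $\delta_{2^j} \circ \tau_g$ and using the ball transformation rule yields $B(\cent(T), C_1 2^j) \subseteq T \subseteq B(\cent(T), C_2 2^j)$, which is exactly (4) with $g = \cent(T)$. The main obstacle is simply the bookkeeping required to thread the noncommuting operations of dilation, left translation, and quaternionic Heisenberg multiplication through the subgroup $\mathscr H^{n-1}_{\mathbb Z}$ and the basic tile $A$; once the conjugation identity and the integrality of $B(\bm y, \bm y')$ are in hand, no deeper analysis beyond the Strichartz tiling and Lemma \ref{lem:inner} enters.
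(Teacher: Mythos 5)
Your proposal is correct and follows essentially the same route as the paper, whose proof simply cites the self-similarity identity \eqref{A} for items (2), (3), (5) and Lemma \ref{lem:inner} for item (4); you have merely made explicit the bookkeeping (the subgroup property of $\mathscr H^{n-1}_{\mathbb Z}$, the conjugation relation $\delta_r\circ\tau_g=\tau_{\delta_r g}\circ\delta_r$, and the behaviour of balls under translations and dilations) that the paper leaves implicit.
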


\begin{proof} It is clear that (1) holds.
(2) (3) (5) follow from \eqref{A}. (4) can be implied by Lemma \ref {lem:inner}.
\end{proof}

\color{black}
Every tile is a dilate and translate of the basic tile $A$, so all have similar geometry.
Hence each tile in $\tile_j$ has fractal boundary
and is ``approximately'' a quaternionic Heisenberg ball of radius $2^{j}$.
If two tiles in $\tile_j$ are ``horizontal neighbours'', then the distance between their centres is $2^{j}$, while if they are ``vertical neighbours'', then the distance is $2^{2j}$.

\begin{proof}[Proof of Theorem \ref{main3}]
Based on the construction of tiles, we see that for every $T\in \tile_{j}$ and for each fixed $N\in\mathbb{N}$, there exists a unique $T_{N+\mathfrak a_0}\in \tile_{N+j+\mathfrak a_0}$ such that $T\subset T_{N+\mathfrak a_0}$. Here $\mathfrak a_0$ is a  positive integer to be determined later.

We now fix $N\in\mathbb N$ and choose an arbitrary $T\in \tile_{j}$.
From Theorem \ref{main2}, we get that
$$ \quad K( g)\not=0,\qquad \ \forall  g\in \mathbb S^n,  $$
where $\mathbb S^n=\{g\in \mathscr H^{n-1}:\ \|g\|=1\}$ is the unit sphere in $\mathscr H^{n-1}$.

Since $K$ is a $C^\infty$ function in $\mathscr H^{n-1} \backslash \{0\}$, there exists $g_0$ in $\mathscr H^{n-1}$ with $\rho(g_0)=1$ such that
$$ |K(g_0)|=\min_{g\in \mathbb S^n } |K(g) |>0. $$

Hence, there exists $ 0<\varepsilon_1\ll1$ such that
\begin{align}\label{non zero e1}
 |K( g)|>{1\over 2} |K( g_0)|
\end{align}
for all $g\in U(\mathbb S^n , 4\varepsilon_1) = \{g\in\mathscr H^{n-1}: \exists \tilde g\in \mathbb S^n  {\rm\ \ such \ that} \ \ d(g,\tilde g)<4\varepsilon_1   \}$.

To continue, we first point out that for the chosen $T\in \tile_{j}$ and that unique tile $T_{N+\mathfrak a_0}\in \tile_{N+j+\mathfrak a_0}$ with $T\subset T_{N+\mathfrak a_0}$,
there exist $\hat h \in T_{N+\mathfrak a_0}$ with
$d(h, \hat h) = \mathfrak C 2^{N+j+\mathfrak a_0}$ and
 {$d(\hat h, T_{N+\mathfrak a_0}^c) >10C_2 2^j$, then $\tilde g_0 :=(\delta_{\mathfrak C^{-1}2^{-N-j-\mathfrak a_0}} (h^{-1} \cdot\hat h))^{-1} \in \mathbb S^n$}. Without lost of generality,
for $$  K(\tilde g_0)= K_1(\tilde g_0) +K_2(\tilde g_0) \textbf{i}+K_3(\tilde g_0)\textbf{j} +K_4(\tilde g_0)\textbf{k},  $$
we assume that {$|K_1(\tilde g_0)|\geq {1\over2} |K(\tilde g_0)|(> {1\over4} |K( g_0)|)$} and that $K_1(\tilde g_0)$ is positive. Then, there exists $0<\varepsilon_o<\varepsilon_1$ such that
\begin{align}\label{k1}
K_1(g)> {1\over4} |K( g_0)|,\quad g\in B(\tilde g_0, 4\varepsilon_o).
\end{align}

From the definition of $\tilde g_0 $ we see that
\begin{align}\label{g*}
 \hat h= h \cdot \delta_{\mathfrak C 2^{N+j+ \mathfrak a_0}}( \tilde g_0^{-1} ).
\end{align}
Next, we choose the integer $\mathfrak a_0$ so that $2^{N+\mathfrak a_0}> 5C_2\mathfrak C^{-1} \varepsilon_o^{-1}$. Then fix some $ \eta\in(0, 2 \varepsilon_o)$ such that
 the two balls $B(h,   \eta r)$ and $B( \hat h,   \eta r) $ with $r=\mathfrak C 2^{N+j+\mathfrak a_0}$ satisfy the following condition:
 {
$$5 C_2 2^j<  \eta r< 10 C_2 2^j.$$
}
Then we can deduce that $T\subset B(h,   \eta r)$ and $B( \hat h,   \eta r) \subset T_{N+\mathfrak a_0}$.

 It is direct that
 for every $g\in B(h,   \eta r)$, $\hat{g}\in B( \hat h,  \eta r)$, we can write
 $$ g =h\cdot \delta_r (g'_1), \quad  \hat{g} = \hat h\cdot \delta_r (g'_2), $$
 where $g'_1 \in B(0,   \eta)$, $g'_2 \in B(0,  \eta)$.

As a consequence, we have
\begin{align}\label{dilation}
K(g,\hat{g}) &= K\big( h\cdot \delta_r (g'_1) ,   \hat h\cdot \delta_r (g'_2)   \big)
= K\big(   h\cdot \delta_r (g'_1) ,   h \cdot \delta_r( \tilde g_0^{-1} )\cdot \delta_r (g'_2)   \big)\nonumber\\
&= K\big(    \delta_r (g'_1) ,     \delta_r( \tilde g_0^{-1} )\cdot \delta_r (g'_2)   \big)\\
&= K\big(    \delta_r (g'_1) ,     \delta_r( \tilde g_0^{-1} \cdot g'_2)   \big)\nonumber\\
&= r^{-Q} K\big(    g'_1,     \tilde g_0^{-1} \cdot g'_2  \big)\nonumber\\
&= r^{-Q} K\big(    (g'_2)^{-1} \cdot    \tilde g_0 \cdot g'_1  \big),\nonumber
\end{align}
where the second equality comes from \eqref{g*} and the third comes from the property of the left-invariance.

Next, we note that
\begin{align*}
d\big(    (g'_2)^{-1} \cdot    \tilde g_0 \cdot g'_1,\ \tilde g_0 \big) &= d\big(      \tilde g_0 \cdot g'_1,\  g'_2 \cdot  \tilde g_0 \big)
\leq  \, \left[ d\big(      \tilde g_0 \cdot g'_1,   \tilde g_0 \big)+ d\big(      \tilde g_0 , g'_2 \cdot  \tilde g_0 \big) \right]\\
&=  \, \left[  d\big(  g'_1,   0 \big)+ d\big( 0, g'_2 \big) \right]\\
&\leq 2  \eta\\
&<4  \varepsilon_o,
\end{align*}
which shows that $ (g'_2)^{-1} \cdot    \tilde g_0 \cdot g'_1$ is contained in the ball $B(\tilde g_0, 4 \varepsilon_o)$
for all $g'_1 \in B(0,  \eta)$ and for all $g'_2 \in B(0,  \eta)$.

Thus, from  \eqref{non zero e1} and \eqref{k1}, we obtain that
\begin{align}\label{lower bound e1}
| K\big(    (g'_2)^{-1} \cdot    \tilde g_0\cdot g'_1  \big)| > {1\over 2 } | K( g_0)|\quad {\rm with}
\quad K_1\big(    (g'_2)^{-1} \cdot    \tilde g_0 \cdot g'_1  \big)> {1\over 4 } | K( g_0)|>0,
\end{align}
 for all $g'_1 \in B(0,  \eta)$ and for all $g'_2 \in B(0,  \eta)$.

Now
combining the equality \eqref{dilation} and  inequality \eqref{lower bound e1} above, we obtain that
\begin{align}\label{lower bound e2}
|K(g,\hat{g})|   > {1\over 2 } r^{-Q} |K( g_0)| \quad {\rm with}\quad K_1(g,\hat{g}) > {1\over 4 } r^{-Q} |K( g_0)|
\end{align}
for every $g\in B(h,  \eta r)$ and for every $\hat g\in B( \hat h,  \eta r)$, where $K_1(g,\hat{g})$ and $K_1(\tilde g_0)$ have the same sign. Here $K(\tilde g_0)$ is a fixed
constant independent of $\eta$, $r$, $h$, $g_1$ and $g_2$. We denote
$$ C(n)= {1\over 2}|K( g_0)|.$$

From the lower bound \eqref{lower bound e2} above, we further obtain that
for the suitable $\eta\in (0,\varepsilon_o)$,
\begin{align}\label{lower bound e3}
|K(g,\hat{g})| > C(n) r^{-Q} \quad {\rm with}\quad K_1(g,\hat{g}) > {1\over 2 }  C(n)r^{-Q}
\end{align}
for every $g\in B(h,  \eta r)$ and for every $\hat{g}\in B( \hat h,  \eta r)$.

Based on the fact that $B( \hat h,  \eta r)\subset T_{N+\mathfrak a_0}$ and  {$\eta r> 5 C_2 2^j $}, there must be some tile $\hat T \in \tile_{j}$ such that $\hat{T}\subset B( \hat h,  \eta r)$. Also note that $T\subset B( h,  \eta r)$. Hence we obtain that  $\mathfrak a_{1} 2^{N+j}\leq d(\cent{(T)},\cent(\hat{T}))\leq \mathfrak a_{2}2^{N+j}$, where $\mathfrak a_1$ and $\mathfrak a_2$ depends only on $\mathfrak a_0$ and $\mathfrak C$. Moreover, we see that
  for all $(g,\hat g)\in T\times \hat{T}$, $K_1(g, \hat g)$ does not change sign and that
   for all $(g,\hat g)\in T\times \hat{T}$, $|K_1(g, \hat g)|\gtrsim  2^{-Q(N+j)}$, where the implicit constant depends on $C(n)$ and $\mathfrak a_0$.

The proof of Theorem \ref{main3} is complete.
\end{proof}
\color{black}

\section{regular functions and  heat kernel integrals on  the quaternionic Heisenberg group}

\subsection{Subharmonicity }

In the early attempts to generalize $H^p$ to several
  variables, Stein    and Weiss \cite{SW} considered a $\mathbb{R}^n$-valued function
$u = (u_1, u_2, . . . , u_n)$   on $\mathbb{R}_+^n$ that satisfies the   generalized Cauchy--Riemann equations
\begin{equation}\label{eq:GCR0}
   \frac {\partial u_j}{\partial x_k}= \frac {\partial u_k}{\partial x_j} \quad {\rm and}\quad \sum_{k=1}^n\frac {\partial u_k}{\partial x_k}=0,
\end{equation}
 which implies that $u$ is harmonic and $|u(x)|^p$ is subharmonic for $p\geq\frac {n-2}{n-1}$. Recall that for a domain $\Omega\subset \mathbb{R}^n$, an upper semicontinuous function $v :\Omega\rightarrow
 [-\infty,\infty)$
is {\it  subharmonic} if $\triangle v\geq 0$ in the sense
of distributions, where $\triangle$ is the Laplace operator.
 Based on the subharmonicity, they  built an $H^p$ theory for the generalized  Cauchy--Riemann systems. It is natural to try to get below $p=\frac {n-2}{n-1}$, and this can be done by studying higher
gradients of harmonic functions in place of (\ref{eq:GCR0}) by Calder\'on-Zygmund \cite{CZ}. So there is no restriction for real variable theory of the $H^p$ spaces, but such restriction is natural for the $H^p$ space of functions satisfying the   generalized Cauchy--Riemann equations.

The subharmonicity of   solutions to the Cauchy--Fueter equation on $\mathbb{R}^4$ was mentioned in
\cite[p.164]{SW} without proof. Let us recall Stein-Weiss general results \cite{SW}.
Let $U,V$ be two finite dimensional complex vector spaces which are irreducible representations of ${\rm Spin}(n)$, the covering group ${\rm SO}(n)$. $V$ is $n$-dimensional with a distinguished real $n$-dimensional subspace $V_0$, identified with $\mathbb{R}^n$. For $u\in C^1(\Omega,U)$ for a domain $\Omega$ in $\mathbb{R}^n$, the gradient of $u$, $\nabla u(x)$ for each $x\in \Omega$, is valued in $U \otimes V$. $U \otimes V$ is irreducible and can be decomposed as
\begin{equation*}
   U \otimes V= U \boxtimes V\oplus  (U \boxtimes V)^\perp
\end{equation*}
where $U \boxtimes V$ is the {\it  Cartan composition}, and $(U \boxtimes V)^\perp$ is its orthogonal complement. Then, we can decompose
\begin{equation*}
   \nabla u(x)=\partial u(x)+\overline{\partial} u(x)
\end{equation*}orthogonally,
and $\overline{\partial} $ is called the {\it generalized Cauchy--Riemann operator}.

\begin{thm} \label{thm:subharmonic} \cite[Theorem 1]{SW} If $u$ is a solution of $\overline{\partial} u=0$, then $u$ is harmonic and $|u(x)|^p$ is subharmonic for $p\geq\frac {n-2}{n-1}$.
\end{thm}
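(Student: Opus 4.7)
The plan is to split the proof into two steps: first establish harmonicity of $u$, then derive the subharmonicity of $|u|^p$ via a pointwise identity and an algebraic inequality whose sharpness dictates the threshold $p\geq (n-2)/(n-1)$.

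For harmonicity, I would use the orthogonal splitting $\nabla = \partial + \overline{\partial}$ of the gradient into its Cartan part $\partial u \in U\boxtimes V$ and its complement. Since the scalar Laplacian acts componentwise as $\Delta = \nabla^*\nabla$, it decomposes (up to commutators that vanish on $\mathbb{R}^n$) as $\partial^*\partial + \overline{\partial}^*\overline{\partial}$. The hypothesis $\overline{\partial} u = 0$ eliminates the second piece, and a representation-theoretic check (or, in the model system $\partial_j u_k = \partial_k u_j$, $\sum_k \partial_k u_k = 0$, the direct computation $\Delta u_j = \sum_k \partial_k\partial_k u_j = \sum_k \partial_j\partial_k u_k = 0$) shows $\partial^*\partial u$ also contributes zero, so each component of $u$ is harmonic.

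For subharmonicity, set $v = |u|^2$. Componentwise harmonicity gives $\Delta v = 2|\nabla u|^2$, and a chain-rule computation on $|u|^p = v^{p/2}$ yields
\begin{equation*}
\Delta |u|^p \;=\; p\,|u|^{p-2}|\nabla u|^2 \;+\; p(p-2)\,|u|^{p-4}\bigl|\textstyle\sum_j u_j\,\nabla u_j\bigr|^2.
\end{equation*}
This is automatic for $p\geq 2$; for $p<2$, nonnegativity is equivalent to the pointwise estimate
\begin{equation*}
(2-p)\,\bigl|\textstyle\sum_j u_j\,\nabla u_j\bigr|^2 \;\leq\; |u|^2\,|\nabla u|^2.
\end{equation*}
In the model case, writing the gradient as a matrix $A$ with $A_{jk} = \partial_k u_j$, the equations $\overline{\partial} u = 0$ force $A$ to be symmetric and traceless; moreover $\sum_j u_j\nabla u_j = Au$ and $|\nabla u|^2 = \|A\|_F^2$, so the inequality reduces to $(2-p)|Au|^2 \leq |u|^2 \|A\|_F^2$.

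The crux is the sharp algebraic inequality bounding the ratio $|Au|^2/(|u|^2\|A\|_F^2)$ over admissible $A$. Diagonalizing a symmetric traceless $A$ with eigenvalues $\lambda_1,\dots,\lambda_n$ satisfying $\sum_i \lambda_i = 0$, and optimizing $\lambda_{\max}^2/\sum_i\lambda_i^2$ by Lagrange multipliers, the extremum is $\frac{n-1}{n}$, realized when one eigenvalue equals $\sqrt{(n-1)/n}$ and the remaining $n-1$ equal $-\sqrt{1/(n(n-1))}$; the worst $u$ aligns with the top eigenvector. This yields $(2-p)\cdot\tfrac{n-1}{n}\leq 1$, i.e.\ the threshold $p\geq (n-2)/(n-1)$. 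For general $U$, the corresponding ratio bound on the Cartan component $U\boxtimes V$ follows from a Casimir eigenvalue computation on the irreducible summands of $U\otimes V$; identifying this eigenvalue and verifying that it produces the same constant $(n-1)/n$ is the main obstacle of the proof and is where the representation-theoretic input is essential.
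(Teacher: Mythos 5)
First, a remark on the comparison itself: the paper does not prove this statement --- it is imported verbatim from Stein--Weiss \cite[Theorem 1]{SW}, and only its consequence for the Cauchy--Fueter operator (Corollary \ref{cor:subharmonic}) is proved here, where the theorem is applied with $U=\mathbb{C}^2_R$, a spin representation of ${\rm Spin}(4)$, not the standard one. Measured against that, your proposal is essentially the classical Stein--Weiss argument, but carried out only for the model system \eqref{eq:GCR0} (i.e.\ $U=V=$ standard representation): harmonicity via $\Delta u_j=\partial_j\sum_k\partial_k u_k=0$, the identity for $\Delta|u|^p$, and the reduction to the sharp bound $|Au|^2\le\frac{n-1}{n}\,|u|^2\|A\|_F^2$ for symmetric traceless $A$, with the correct extremal spectrum and the correct threshold $p\ge\frac{n-2}{n-1}$. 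That part is right, modulo the standard caveat that the pointwise computation is only valid where $u\neq0$; one should regularize with $(|u|^2+\epsilon^2)^{p/2}$ (the same inequality gives $\Delta\ge0$ for the regularization) and pass to the limit to get subharmonicity across the zero set.

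The genuine gap is the general case, which you yourself label ``the main obstacle'': for arbitrary irreducible $U$ the needed input is the contraction bound $\bigl|\iota_{u/|u|}(\nabla u)\bigr|_V^2\le\frac{n-1}{n}|\nabla u|^2$ for $\nabla u$ lying in the Cartan composition $U\boxtimes V$, and this is precisely the representation-theoretic heart of \cite[Theorem 1]{SW}; asserting that it ``follows from a Casimir eigenvalue computation'' without identifying the eigenvalue or verifying that it yields the constant $\frac{n-1}{n}$ is not a proof, and without it the stated threshold is not established in the very case the paper uses (the spin representation in Corollary \ref{cor:subharmonic}). The harmonicity step in the general case is also unsound as written: granting $\nabla^*\nabla=\partial^*\partial+\overline{\partial}^{\,*}\overline{\partial}$ (which does hold, by orthogonality of the pointwise decomposition), the hypothesis $\overline{\partial}u=0$ gives $-\Delta u=\partial^*\partial u$, so claiming that ``a representation-theoretic check shows $\partial^*\partial u$ also contributes zero'' is circular --- that vanishing is equivalent to the harmonicity being proved. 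What is actually needed is an identity expressing $\Delta$ on $U$-valued functions through $\overline{\partial}$ alone (the analogue of $\Delta u=\operatorname{grad}\operatorname{div}u-\operatorname{curl}\operatorname{curl}u$ in the model case), and establishing such an identity for general $U$ again requires exactly the representation-theoretic work you have deferred.
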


\begin{cor} \label{cor:subharmonic} For any solution  $F$  to the Cauchy--Fueter equation on a domain in $\mathbb{R}^4$,  $|F(x)|^p$ is subharmonic for $p\geq\frac {2}{3}$.
\end{cor}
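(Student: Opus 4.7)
The plan is to derive the corollary as a direct specialisation of Theorem \ref{thm:subharmonic} once the Cauchy--Fueter operator on $\mathbb{R}^4$ is identified with the Stein--Weiss generalised Cauchy--Riemann operator $\overline{\partial}$ for an appropriate pair of $\mathrm{Spin}(4)$-representations $(U,V)$. With $n=4$ the Stein--Weiss exponent $\frac{n-2}{n-1}$ is exactly $\frac{2}{3}$, so no further analytic work is needed beyond the identification.

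First I would set up the representation-theoretic data. Use the exceptional isomorphism $\mathrm{Spin}(4)\cong \mathrm{SU}(2)\times\mathrm{SU}(2)$, with the vector representation $V=\mathbb{C}^4$ (of real form $\mathbb{R}^4$) realised as the tensor product of the two fundamental spinor representations, and take $U=\mathbb{C}^2\cong\mathbb{H}$ to be one of these fundamental representations. Then $U\otimes V$ decomposes into irreducibles; the highest-weight summand is the Cartan composition $U\boxtimes V$, and its orthogonal complement $(U\boxtimes V)^\perp$ is isomorphic to $U$. Projection of $\nabla F$ onto this complement defines a first-order $\mathrm{Spin}(4)$-equivariant operator $U\to U$.

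Next I would identify this Stein--Weiss projection with the Cauchy--Fueter operator $\overline{\partial}_q=\partial_{x_1}+\mathbf{i}\partial_{x_2}+\mathbf{j}\partial_{x_3}+\mathbf{k}\partial_{x_4}$. Since both operators are $\mathrm{Spin}(4)$-equivariant first-order operators mapping $U$-valued functions into a space isomorphic to $U$, and both are invariant under the left action by unit quaternions, Schur's lemma forces them to agree up to a nonzero scalar. Consequently, an $\mathbb{H}$-valued function $F$ satisfies $\overline{\partial}_q F=0$ if and only if the Stein--Weiss equation $\overline{\partial} F=0$ holds.

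With this identification in hand, Theorem \ref{thm:subharmonic} applies verbatim to $u=F$ with $n=4$, yielding that $F$ is harmonic and that $|F|^p$ is subharmonic for $p\geq \frac{n-2}{n-1}=\frac{2}{3}$, which is the claim. The main point to take care of is the representation-theoretic identification in the second step; once $\overline{\partial}_q$ has been recognised as the unique (up to scalar) $\mathrm{Spin}(4)$-equivariant projection of $\nabla$ onto $(U\boxtimes V)^\perp$, the corollary is an immediate consequence of Theorem \ref{thm:subharmonic}.
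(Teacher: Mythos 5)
Your overall reduction is the same as the paper's: both arguments specialise Theorem \ref{thm:subharmonic} with $n=4$, so that $\tfrac{n-2}{n-1}=\tfrac23$, and the entire content of the corollary is the identification of the Cauchy--Fueter operator with a Stein--Weiss generalised Cauchy--Riemann operator for $U=\mathbb{C}^2$. It is in that identification that your proposal has genuine gaps. First, the representation-theoretic bookkeeping is off: for $\mathrm{Spin}(4)\cong \mathrm{SU}(2)_L\times\mathrm{SU}(2)_R$, with $U=\mathbb{C}^2_R$ a half-spin representation and $V\cong\mathbb{C}^2_L\otimes\mathbb{C}^2_R$ the complexified vector representation, one has $U\otimes V\cong(\odot^2\mathbb{C}^2_R\otimes\mathbb{C}^2_L)\oplus\mathbb{C}^2_L$, so $(U\boxtimes V)^\perp$ is the \emph{opposite} half-spin representation $\mathbb{C}^2_L$, which is not isomorphic to $U$; your statement that the complement is isomorphic to $U$ is false, and it matters because the Schur-type multiplicity-one argument is about equivariant maps into $\mathbb{C}^2_L$, not into $U$. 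Second, and more seriously, the Schur step is not available as you state it: to conclude that $\overline{\partial}_q$ agrees with the Stein--Weiss projection up to a nonzero scalar you must first exhibit $\overline{\partial}_q$ as a $\mathrm{Spin}(4)$-equivariant operator from $\mathbb{C}^2_R$-valued functions to $\mathbb{C}^2_L$-valued functions, with the group acting simultaneously on $\mathbb{R}^4$ (by $q\mapsto aq\bar b$) and on the source and target values by the two different half-spin actions. ``Invariance under the left action by unit quaternions'' is only one $\mathrm{SU}(2)$ factor and does not pin the operator down: for instance, every operator $\sum_j c_j\partial_{x_j}$ with constant quaternionic coefficients acting on the left commutes with the right action of unit quaternions on values, and the conjugate operator $\partial_{x_1}-\mathbf{i}\partial_{x_2}-\mathbf{j}\partial_{x_3}-\mathbf{k}\partial_{x_4}$ enjoys the same loose symmetry while corresponding to the opposite chirality. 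Verifying the genuine equivariance is precisely the nontrivial step, and you have asserted it rather than proved it.

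Note also that the identification is not the naive ``apply the theorem verbatim to $u=F$'': in the paper one applies the matrix embedding \eqref{eq:embed-Q1} to $\overline{\partial}_qF=0$, obtains the $2\times2$ system \eqref{eq:GCR}, and reads off from its second column that the pair $u_0=F_1-\mathbf{i}F_2$, $u_1=F_3+\mathbf{i}F_4$ satisfies the Stein--Weiss equations \eqref{eq:dbar}; the passage $F\mapsto u$ rearranges and conjugates components, and the conclusion transfers only because $|u|=|F|$. So your plan is repairable, either by carrying out this explicit computation (as the paper does) or by honestly establishing the $\mathrm{Spin}(4)$-equivariance of $\overline{\partial}_q$ with the correct source and target representations and then invoking the multiplicity-one occurrence of $\mathbb{C}^2_L$ in $U\otimes V$; but as written, the central identification --- the only thing the corollary requires beyond quoting Theorem \ref{thm:subharmonic} --- is missing.
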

\begin{proof}
   Let us check that the Cauchy--Fueter operator on   $\mathbb{R}^4$ is the   generalized Cauchy--Riemann operator for  {$ {U}=\mathbb{C}^2$}, a spin representation of ${\rm Spin}(4)$. In \cite[Section 8]{SW}, Stein-Weiss discussed all such operators on $\mathbb{R}^4$. Then
$\frac {2}{3}$ is  {best one} for the Cauchy--Fueter operator by \cite[Theorem 4]{SW}.

Assume a unitary representation $\zeta\rightarrow R_\zeta$ of ${\rm Spin}(n)$ acts on $U$.
Suppose that $\{f_\alpha\}$ is an orthonormal  {basis of $U$} and $\{e_j\}$ is an orthonormal basis of $V_0$. We can write $ u(x)=\sum u_\alpha(x) f_\alpha$. Then $ U \otimes V$ has a basis $\{f_\alpha\otimes e_j\}$, and
\begin{equation*}
   \nabla u(x)=\sum\frac {\partial u_\alpha}{\partial x_j}(x) f_\alpha\otimes e_j.
\end{equation*}
Recall \cite[Section 2]{SW} that if $v(x):=R_\zeta[u(\rho_{\zeta^{-1}}(x))]$, where $\zeta\rightarrow \rho_\zeta$ is the rotation representation on $V_0$, then
\begin{equation}\label{eq:rep-gradient}
  \nabla v(0):=(R_\zeta \otimes\rho_\zeta) (\nabla u)(0) .
\end{equation}

Now let  $V$ be $ \mathbb{C}^4$ with a  {distinguished} real $4$-dimensional subspace given by the embedding of  $\mathbb H$ into the space of complex $2\times2$ matrices $\mathbb{C}^{2\times 2 } \simeq \mathbb{C}^4 $
\begin{equation}\label{eq:embed-Q1}q=x_1+  x_2\mathbf{i}+ x_3\mathbf{j}
+x_4\mathbf{k}  \mapsto
\tau(q)
	=	\begin{pmatrix}
	x_1+ix_4&-x_2-ix_3\\
x_2 -ix_3&x_1-ix_4
	\end{pmatrix}.
\end{equation} $\tau$ is a representation, i.e. $\tau(q_1q_2)	=\tau(q_1)\tau(q_2)	$ (cf. \cite{wang-alg}).	
We can identify ${\rm SU}(2)$ with unit quaternions. It is well known
   ${\rm Spin}(4)\cong {\rm SU}(2)\times  {\rm SU}(2)$, which acts on $V_0$ as
   $\rho(q_1,q_2)\tau(q)=\tau(q_1)\tau(q)\tau(\overline{q_2 })$.
We denote    ${\rm Spin}(4)\cong {\rm SU}(2)_L\times  {\rm SU}(2)_R$, where ${\rm SU}(2)_L$ and $ {\rm SU}(2)_R$ acts on $\mathbb{C}^2$ naturally,  {denoted by} $\mathbb{C}^2_L$ and $\mathbb{C}^2_R$,  {as}  $2$-dimensional column and row vectors, respectively. They are two  spin representations of ${\rm Spin}(4)$. $\mathbb{C}^4 $  {as the} space of $2\times2$ matrices means
\begin{equation*}
   \mathbb{C}^4\cong \mathbb{C}^2_L\otimes\mathbb{C}^2_R.
\end{equation*}

For $n=4$ and $U=\mathbb{C}^2_R$, we see that $(\frac {\partial u_\alpha}{\partial x_j}(x))$   is in the representation $\mathbb{C}^2_R\otimes \mathbb{R}^4$ by (\ref{eq:rep-gradient}). Under action $\tau$ in (\ref{eq:embed-Q1}), we see that $\tau(\frac {\partial u_\alpha}{\partial x_j}(x))\in \mathbb{C}_R^2\otimes  \mathbb{C}^2_L\otimes\mathbb{C}^2_R\cong \mathbb{C}_R^2\otimes\mathbb{C}^2_R\otimes  \mathbb{C}^2_L$
given by
\begin{equation*}
   \nabla u(x)=\sum_{\alpha,\beta,\mu=0,1} \partial_{\mu\beta}u_\alpha(x)f_\alpha\otimes f_{\beta}\otimes g_\mu
\end{equation*}
 if $\{f_\alpha\}$ is an orthonormal basis of $\mathbb{C}_R^2 $ and $\{g_\mu\}$ is an orthonormal basis of $\mathbb{C}_L^2$, and we denote
\begin{equation} (\partial_{ \mu\beta}):=\begin{pmatrix}
	\partial_{x_1}+i\partial_{x_2}&-\partial_{x_3}-i\partial_{x_4}\\
\partial_{x_3} -i\partial_{x_4}&\partial_{x_1}-i\partial_{x_2}
	\end{pmatrix},
\end{equation}

$\mathbb{C}_R^2\otimes \mathbb{C}^4$ decomposes into irreducible ones as
 \begin{equation*}\begin{split}
   \mathbb{C}_R^2\otimes \mathbb{C}^4&\cong \mathbb{C}_R^2\otimes  \mathbb{C}^2_L\otimes\mathbb{C}^2_R \cong \mathbb{C}_R^2\otimes \mathbb{C}^2_R\otimes  \mathbb{C}^2_L
   \cong \left(\odot^2\mathbb{C}^2_R \oplus  \Lambda^2\mathbb{C}^2_R\right)\otimes\mathbb{C}^2_L \cong \left (\odot^2\mathbb{C}^2_R\otimes\mathbb{C}^2_L\right)\oplus  \mathbb{C}^2_L.
\end{split}\end{equation*}
This symmetric and antisymmetric parts decomposition is realized as
\begin{equation*}
   f_\alpha\otimes f_{\beta}=f_\alpha\odot f_{\beta}+ f_\alpha\wedge f_{\beta},
  \end{equation*}
 where, $ f_\alpha\odot f_{\beta}:=(f_\alpha\otimes f_{\beta}+f_\beta\otimes f_{\alpha})/2$, $f_\alpha\wedge f_{\beta}:= (f_\alpha\otimes f_{\beta}-f_\beta\otimes f_{\alpha})/2$.
Thus, the projection of $\nabla u(x)$ to $   \mathbb{C}^2_L$ is simply
\begin{equation*}
    \overline{\partial} u(x)=\sum_{\mu=0,1} ( {\partial} _{\mu 1 }u_0 (x)- {\partial}_{ \mu 0} u_1 (x)) f_0\wedge f_{1}\otimes g_\mu.
\end{equation*}
As a result, the   generalized Cauchy--Riemann $\overline{\partial} u(x)=0$ for $U=\mathbb{C}^2_R$ is equivalent to
\begin{equation}\label{eq:dbar}
   \overline{\partial} _{ \mu 1}u_0 (x)-\overline{\partial}_{ \mu 0} u_1 (x)=0,\qquad \mu=0,1.
\end{equation}

On the other hand, apply the representation $\tau$ (\ref{eq:embed-Q1}) to the Cauchy--Fueter $
\overline\partial_{{q} }F=0$  to get
\begin{equation}\label{eq:GCR}
   \begin{pmatrix}
	{\partial} _{0 0} &{\partial} _{01 }\\
{\partial} _{1 0 }&{\partial} _{1 1 }
	\end{pmatrix}\begin{pmatrix}
	\widetilde{F}_0&-\overline{\widetilde{F}}_1\\
\widetilde{F}_1&\overline{\widetilde{F}}_0
	\end{pmatrix}=0,
\end{equation}
where $\widetilde{F}_0=F_{ { 1}}+
 \mathbf{i}F_{ { 2}},  \widetilde{F}_1=F_{ { 3}}- \mathbf{i}F_{ { 4}} $. From the second column of this matrices equation, we see that $u_1= \overline{\widetilde{F}}_1$ and $u_0= \overline{\widetilde{F}_0}$ satisfy the   generalized Cauchy--Riemann (\ref{eq:dbar}) for $U=\mathbb{C}^2_R$. Thus, by Theorem
\ref{thm:subharmonic}, $|u|^p=|F|^p$ is subharmonic for $p\geq\frac {2}{3}$.
\end{proof}

 \begin{prop}\label{lem:bound}
    There is a positive constant $C$
  such that for all $\frac 23\leq p \leq 1$ and $(\varepsilon , \xi)\in \mathscr U$, we have
  \begin{equation}\label{eq:bound}
    |f ( \varepsilon , \xi)|\leq  C\|f \|_{H^p( \mathscr U ) }\varepsilon^{-\frac {2n+1}p}
  \end{equation}
  for any $f\in H^p( \mathscr U ) $.
 \end{prop}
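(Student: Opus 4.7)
The plan is to exploit the invariance of $H^p(\mathscr U)$ under left Heisenberg translations and parabolic dilations to reduce the estimate to the single reference point $(\varepsilon,\xi)=(1,0)$, and then to apply the subharmonicity of $|F|^p$ for $p\ge 2/3$ on $\mathcal U$ (Corollary~\ref{cor:subharmonic}) in each quaternionic variable separately over a polydisk sitting inside $\mathcal U$.

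For the reduction, the Cauchy--Fueter operators $\overline{Q}_m$ are left-invariant on $\mathscr H^{n-1}$ and scale homogeneously of degrees $2$ and $1$ under the dilation $(t,g)\mapsto(\varepsilon t,\delta_{\sqrt\varepsilon}(g))$, so $\tilde f(t,g):=f\bigl(\varepsilon t,\tau_\xi\circ\delta_{\sqrt\varepsilon}(g)\bigr)$ is again regular on $\mathscr U$. Using $|\delta_r E|=r^Q|E|$ with $Q=4n+2$, a direct change of variables gives $\|\tilde f\|_{H^p(\mathscr U)}=\varepsilon^{-Q/(2p)}\|f\|_{H^p(\mathscr U)}$. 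Since $\tilde f(1,0)=f(\varepsilon,\xi)$, it suffices to prove $|\tilde f(1,0)|\le C\|\tilde f\|_{H^p(\mathscr U)}$; the desired exponent $(2n+1)/p=Q/(2p)$ then falls out of the rescaling.

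To prove this reduced estimate, set $F:=\tilde f\circ \pi^{-1}$ on $\mathcal U$, so that $F(1,0,\dots,0)=\tilde f(1,0)$ at a point where $\rho=1$. Since $F$ is separately regular in each quaternionic slot $q_l$, Corollary~\ref{cor:subharmonic} makes $|F|^p$ separately Euclidean subharmonic in each $q_l\in\mathbb R^4$ for $\tfrac{2}{3}\le p\le 1$. Iterating the sub-mean-value property over the polydisk
\begin{equation*}
D:=B_{\mathbb R^4}\bigl((1,0,0,0),r\bigr)\times\prod_{l=2}^n B_{\mathbb R^4}(0,r),\qquad r=\tfrac{1}{10n},
\end{equation*}
which one checks satisfies $\rho\ge\tfrac{1}{2}$ throughout (hence $D\subset\mathcal U$) and has Euclidean volume comparable to $1$, yields
\begin{equation*}
|F(1,0,\dots,0)|^p\le C\int_D |F(q)|^p\,dq.
\end{equation*}
The diffeomorphism $\pi$ is a shear of Jacobian~$1$, so changing variables to $(\eta,g)\in\mathbb R_+\times\mathscr H^{n-1}$ and applying Fubini bound the right-hand side by $|I|\sup_{\eta>0}\int_{\mathscr H^{n-1}}|\tilde f(\eta,g)|^p\,dg\le \|\tilde f\|_{H^p(\mathscr U)}^p$, where $I$ is the bounded $\eta$-interval swept out by $\pi(D)$.

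The main technical point—which also forces the sharp exponent $Q/2=2n+1$ rather than the naive topological $4n$—is the verification that $D\subset\mathcal U$: subharmonicity in $q_1$ is used on a $4$-ball of radius of order $\varepsilon$, while each remaining $q_l$ requires a $4$-ball of radius only of order $\sqrt{\varepsilon}$, reflecting the parabolic nature of the defining inequality $\mathrm{Re}\,q_1>|q'|^2$. Balancing these two scales against $\rho>0$, and keeping track of the Heisenberg Jacobian $\varepsilon^{Q/2}$ in the rescaling step, are the places requiring genuine care.
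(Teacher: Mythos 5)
Your proposal is correct and follows essentially the same route as the paper: reduce by invariance to a single point, use Corollary \ref{cor:subharmonic} to get subharmonicity of $|F|^p$ in each quaternionic variable, iterate the sub-mean value inequality over a quaternionic polydisk inside $\mathcal U$, and finish with the Jacobian-one shear $\pi$ plus Fubini against the $H^p$ norm. The only (harmless) difference is that you also normalize $\varepsilon=1$ via the parabolic dilation, so the factor $\varepsilon^{-(2n+1)/p}=\varepsilon^{-Q/(2p)}$ comes from the scaling of the $H^p$ norm, whereas the paper works directly at scale $\varepsilon$ with the anisotropic polydisk $B_{\mathbb H}(\varepsilon,\varepsilon/2)\times B_{\mathbb H}(0,\sqrt{\varepsilon}/2)\times\cdots\times B_{\mathbb H}(0,\sqrt{\varepsilon}/2)$.
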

 \begin{proof}
 Note that if $f\in H^p( \mathscr U )$, then $\widetilde{f}(\varepsilon, g')=f(\varepsilon,g\cdot g')$ (for a fixed $g$) is also regular, because
\begin{equation*}
   \overline Q_l\widetilde{f}( \varepsilon ,g'  )= \overline Q_l f( \varepsilon ,g\cdot g'  )=0
\end{equation*} due to the left invariance of $Y_j$'s and the fact that $f$ is regular. Moreover, we have
\begin{equation*}
   \int_{\mathscr H^{n-1} }|\widetilde{f}( \varepsilon , g' )|^pdg'=
\int_{\mathscr H^{n-1} }| {f}( \varepsilon , g'  )|^pdg'
\end{equation*}
by the invariance of the measure. We get $\widetilde{f}\in H^p( \mathscr U )$. So it is sufficient to prove (\ref{eq:bound}) for $g=0$.

Recall that
   $F(q_1  , q' )=f(q_1-|q'|^2 , q')$ is an element of $ H^p(\mathcal{U} )$ with the same norm by (\ref{eq:isomorphism}).   By Corollary \ref{cor:subharmonic},  $|F|^p$ is   subharmonic on $ \mathcal{ U }$ for each quaternionic variable. Consequently, the submean value inequality holds for each quaternionic variable. By
 the  subharmonicity of $|F|^p(q_1,0,\ldots,0)$,  we get
   \begin{equation*}
     |F|^p(\varepsilon,0)\leq \frac 1{| B_{\mathbb H}(\varepsilon,\varepsilon/2)|} \int_{ B_{\mathbb H}(\varepsilon,\varepsilon/2)}|F|^p(q_1,0,\ldots,0)dV(q_1),
   \end{equation*}
    where  $ B_{\mathbb H}(x,r)$ is a ball in $\mathbb{H}$ with radius $r$ and center $x$. Then apply the submean value inequality to subharmonic function $|F|^p(q_1,q_2,0,\ldots,0)$ in variable $q_2$ to get  \begin{equation*}
     |F|^p(\varepsilon,0)\leq \frac 1{| B_{\mathbb H}(\varepsilon,\varepsilon/2)\times  B_{\mathbb H}(0,\sqrt\varepsilon/2)|} \int_{ B_{\mathbb H}(\varepsilon,\varepsilon/2)} dV(q_1)\int_{   B_{\mathbb H}(0,\sqrt\varepsilon/2)}|F|^p(q_1,q_2,0,\ldots,0)dV(q_2).
   \end{equation*}  Repeating this procedure, we finally get
   \begin{equation*}
     |F|^p(\varepsilon,0)\leq \frac 1{|D|} \int_D|F|^p(q)dV(q),
   \end{equation*}
  where $D= B_{\mathbb H}(\varepsilon,\varepsilon/2)\times  B_{\mathbb H}(0,\sqrt\varepsilon/2)\times \cdots\times  B_{\mathbb H}(0,\sqrt\varepsilon/2)$. Noting that $D\subset\{q \in \mathcal{U}: \varepsilon/4< \operatorname{Re} q_1-|q'|^2< 3\varepsilon/2 \}$, we have
   \begin{align*}
     |F|^p(\varepsilon,0)&\leq \frac 1{|D|}    \int_{\{  \varepsilon/4< \operatorname{Re} q_1-|q'|^2< 3\varepsilon/2 \}}  \left|F\left(x_1 ,x_2\cdots ,  x_{4n}\right)\right|^p dx_1dx_2\cdots dx_{4n}
      \\& \leq \frac {2^{4n}}{\varepsilon^{2n+2}}    \int_{(\frac {\varepsilon}4,\frac {3\varepsilon}2)\times \mathbb{R}^{4n-1}}  \left|F\bigg(x_1+\sum_{j=5}^{4n}|x_j|^2,x_2\cdots ,  x_{4n}\bigg)\right|^p dx_1dx_2\cdots dx_{4n}
      \\
    & \leq \frac {2^{4n}}{\varepsilon^{2n+2}}  \int_{\varepsilon/4}^{  3\varepsilon/2 }dx_1\int_{\partial\mathcal{U} } |F (p+ x_1  \mathbf{e}_1)|^p d\beta(p)
     \leq \frac {5\cdot 2^{4n-2}}{\varepsilon^{2n+1}}\|F\|_{H^p(\mathcal U )}^p,
\end{align*}
as in \cite[(4.5)]{CMW},
  where    we have used the coordinates transformation
  \begin{equation*}
     (x_1 ,\cdots ,  x_{4n} )\rightarrow\bigg(x_1+\sum_{j=5}^{4n}|x_j|^2,x_2\cdots ,  x_{4n}\bigg),
  \end{equation*}
    whose  Jacobian is the identity. The estimate follows from $ F  (\varepsilon,0)=f(\varepsilon,0)$ and $\|F\|_{H^p(\mathcal U )}=\|f\|_{H^p(\mathscr U )}$.
  \end{proof}

\subsection{The heat equation }
 Recall that it is a fundamental fact in  $H^1$ theory that an $H^1$ harmonic function     on $\mathbb{R}^{n+1}_+$  can be expressed as Poisson integral of its boundary value. To develop $H^1$ theory for holomorphic  functions on a strongly pseudoconvex domain ${\mathcal D }$, there is a natural candidate, the {\it Poisson-Szeg\H o kernel}, which is the function $\mathscr P(z, w)$ on ${\mathcal D }\times \partial{\mathcal D }$ defined by
\begin{equation}\label{eq:Poisson-Szego}
   \mathscr P(z, w)=\frac {|S(z, w)|^2}{ {S(z, z)}}, \qquad z\in {\mathcal D }, w\in \partial{\mathcal D },
\end{equation}if $S(z, z)\neq 0$. Here $S(z, w)$ is the {\it Cauchy--Szeg\H o kernel} reproducing $H^2(\mathcal D )$ functions, which is holomorphic in $z$ and anti-holomorphic in $w$.
$P$ reproduces
$H^2(\mathcal D )$ functions, because, for $f\in H^2(\mathcal D )$,
\begin{equation*}
   \int_{{\mathcal S}} \mathscr P(z, w)f(w)d\beta(w)=\frac {1}{ {S(z, z)}} \int_{\partial{\mathcal D }} S(z, w) \overline{S(z, w)}f(w)d\beta(w)=f(z)
\end{equation*}
by Cauchy--Szeg\H o kernel   reproducing $H^2(\mathcal D )$ function  $\overline{S(z, \cdot)}f(\cdot) $ for fixed $z$ and $ {S(z, z)} $ real.
We have
$
   \int_{\partial{\mathcal D }} \mathscr P(z, w) d\beta(w)=1.
$ Note that Cauchy--Szeg\H o kernel restricted to the boundary is a singular integral, and so it is not an approximation to the identity.
The Poisson-Szeg\H o  reproducing formula was used by Koranyi \cite{Ko} and
Garnett-Latter  \cite{GL} to study holomorphic $H^1$ space over the unit ball in $\mathbb{C}^n$ and its atomic decomposition. This construction does not work in the noncommutative case, because   $\overline{S(z, \cdot)}f(\cdot)$ is not regular in general although $\overline{S(z, \cdot)}$ and $f(\cdot)$ are both regular. For example $x_1+\mathbf{i}x_2$ and $x_2+\mathbf{j}x_4$ are both annihilated by the Cauchy--Fueter operator $\overline{\partial}_{q}:={\partial}_{x_{1}}+\textbf{i}{\partial}_{x_{2}}+
\textbf{j}{\partial}_{x_{3}}+\textbf{k}
{\partial}_{x_{4}} $, but their product $(x_1+\mathbf{i}x_2)(x_2+\mathbf{j}x_4)$ is not.

In early 1970s, little information of  Cauchy--Szeg\H o kernel for a strongly pseudoconvex domain  in  $\mathbb{C}^n$ was known. Because   real and imaginary parts of a holomorphic function are both harmonic, Stein \cite{St} used the Euclidean Poisson integral in a clever way to prove the admissible convergence almost everywhere of a bounded holomorphic function with approach regions defined in terms of Carnot--Carath\'eodory (C-C) distance on the boundary. This technique was used to control     the (non)tangential maximal function
of an $H^p$ function on some bounded pseudoconvex domains in  $\mathbb{C}^n$ by Krantz--Li \cite{KL1} and Dafni \cite{Da}. But in the unbounded case,   Poisson integral is an integral with respect to the surface measure of the boundary, which blows up at infinity with respect to the Lebegue measure on the Heisenberg group. Moreover, since the Lipschitz constant of the boundary is unbounded, the detail of Poisson integral is not known directly. The key point is to find a reproducing formula that matches the geometry of the boundary.

On the other hand, since  a holomorphic function is annihilated by the Beltrami--Laplace operator associated to a K\"ahler metric,  potential theory associated to this operator (the    Dirichlet problem and its solution, etc.)  gives us information  of  holomorphic   function on the domain. In general, we need to choose a K\"ahler metric matching  C-C geometry on the boundary.   Geller \cite{Gel} wrote down the Beltrami--Laplace operator associated to the complex hyperbolic metric on the   Siegel upper half space, and found the solution to the corresponding  Dirichlet problem explicitly. It was  generalized by  Graham \cite{Gra} to some modifications of  the Beltrami--Laplace equation. This solution formula   reproduces  holomorphic   functions,  playing the role of Poisson integral,  and  can be used to prove the boundary value of a  holomorphic $H^1$ function  on the   Siegel upper half space belongs to the boundary Hardy space $H^1$ over the Heisenberg group \cite{Gel}.

\begin{proof}[Proof of Theorem \ref{prop:regular-sublap}]
By (\ref{eq:b})---(\ref{eq:Y-bracket}), we have
  \begin{align*}
Q_{l+1 } \overline{Q}_{l+1 } = &(Y_{4 l +1}-
 \mathbf{i}Y_{4 l +2}
 -\mathbf{j}Y_{4 l +3}-\mathbf{k}Y_{4 l +4})( Y_{4 l +1}+
 \mathbf{i}Y_{4 l +2}
 + \mathbf{j}Y_{4 l  +3}+ \mathbf{k}Y_{4 l +4})\\
= &\sum_{j=1}^4 Y_{4 l +j}^2+\mathbf{i}([Y_{4 l +1},
Y_{4 l +2}]-[Y_{4 l  +3}, Y_{4 l +4}] )+\mathbf{j}([Y_{4 l +1},
Y_{4 l +3}]+[Y_{4 l +2}, Y_{4 l +4}] )\\&\qquad\qquad+\mathbf{k}([Y_{4 l +1},
Y_{4 l +4}]-[Y_{4 l +2}, Y_{4 l +3}] )\\
= &\sum_{j=1}^4 Y_{4 l +j}^2+4\mathbf{i}\sum_{\alpha=1}^3 (b_{12}^\alpha-b_{34}^\alpha)   \partial_{ t_\alpha}+4\mathbf{j}\sum_{\alpha=1}^3 (b_{13}^\alpha+b_{24}^\alpha)   \partial_{ t_\alpha} +4\mathbf{k}\sum_{\alpha=1}^3 (b_{14}^\alpha-b_{23}^\alpha)   \partial_{ t_\alpha} \\=&\sum_{j=1}^4 Y_{4 l +j}^2+ 8\left(
\textbf{i}\partial_{t_{1}}+
\textbf{j}\partial_{t_{2}}+\textbf{k}\partial_{t_{3}}\right),
  \end{align*}
 for $l=0,\cdots,n-2$. Hence if $f$ is regular, we have
 \begin{equation*}\begin{split}
  ( \triangle_H+8(n-1) \partial_{t })f= \left(-\sum_{l=0}^{n-2} Q_{l+1 }\overline{Q}_{l+1 } + 8(n-1) \overline Q_0\right)f=0
  \end{split}\end{equation*}
  by $\overline Q_jf=0$ by Proposition \ref{prop:regular-equiv}.
 \end{proof}
 We can use heat semigroup $e^{-\frac t{8(n-1)}\triangle_H}$ to define the Littlewood--Paley function   and the Lusin area integral to study boundary behavior of regular  functions on $\mathscr U$.

 \begin{rem} (1) In \cite{Gel}, Geller developed the $H^1$ theory for holomorphic functions on the Siegel upper half space $\mathcal{D}:=\{(z',z_{n+1})\in \mathbb{ C}^n\times \mathbb{C}^1\mid \rho=\operatorname{Im} z_{n+1}-|z'|^2\}$, by using the   Laplace--Beltrami operator
   \begin{equation}\label{eq:Beltrami}
     \triangle_B=4\rho\left(\sum_{j=1}^n  Z_{ j}\overline{Z}_{ j}+4\rho Z_{n+1}\overline{Z}_{n+1}+2\mathbf{i}n \overline{Z}_{n+1}\right)=4\rho\left(\frac 14\sum_{j=1}^n  (X_{ j}^2+Y_{ j}^2)+ \partial _{\rho}^2+\partial_{ s}^2 -n\partial_ \rho\right)
  \end{equation}
  with respect to  the Bergman metric
 (cf. \cite[(2.2)]{Gra}), which annihilates holomorphic functions ($\overline{Z}_jf=0$), where $Z_j=\frac 12(X_j-\mathbf{i}Y_j)$ with $X_j=\partial_{x_j}+2y_j\partial_s$ and $Y_j=\partial_{y_j}-2x_j\partial_s$ and $Z_{n+1}=\frac 12(\partial_s-\mathbf{i}\partial_\rho)$.

 (2)
  If we throw out the term $4\rho Z_{n+1}\overline{Z}_{n+1}$ in (\ref{eq:Beltrami}),  holomorphic functions are annihilated by  the heat operator $\sum_{j=1}^n  (X_{ j}^2+Y_{ j}^2)  -4n\partial_\rho$. This fact seems to not have been noticed before in literatures, and can be used to simplify proofs in \cite{Gel} and may be generalized to other domains.
 \end{rem}

 \subsection{The heat kernel integral}

 Denote
 \begin{equation*}
    \mathcal{L}:= \frac 1{8(n-1)}\sum_{j=1}^{4 n-4}  Y_j^2-\partial_{t }.
 \end{equation*}We need the following   maximum principle for heat equation. Although there is a  maximum principle for heat equation on  any
 homogeneous group  for smooth functions  in \cite[Proposition 8.1]{FoSt}, we give its proof here since we need the proof for nonsmooth functions later.
 \begin{prop}  \label{prop:max}  {\rm (Maximum principle)} Let $D$ be a bounded domain in $\mathscr H^{n-1}$ and $\Omega=(0,T)\times D $ for $T>0$. Suppose that
 $v\in C^2( \overline{{\Omega}}) $,  $v|_{[0,T)\times \partial D }\leq 0$, $v|_{ \{0\}\times D }\leq 0$ and $\mathcal{L}v\geq 0$ in $\Omega$. Then $v\leq 0$ in $\Omega$.
\end{prop}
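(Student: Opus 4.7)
\medskip

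\noindent\textbf{Proof proposal for Proposition \ref{prop:max}.} The plan is to use the standard parabolic maximum principle argument adapted to the sub-Laplacian $\frac{1}{8(n-1)}\sum_{j=1}^{4n-4} Y_j^2$ on $\mathscr{H}^{n-1}$. The key observation is that although $Y_j$ is not $\partial_{y_j}$, it is still a smooth vector field, so we can reason along its integral curves exactly as in the elliptic test: if $w$ attains a local maximum at $g_0$, then the integral curve $s\mapsto \phi_j^s(g_0)$ of $Y_j$ through $g_0$ gives a one-variable function $s\mapsto w(\phi_j^s(g_0))$ with local maximum at $s=0$, whence $Y_jw(g_0)=0$ and $Y_j^2w(g_0)\leq 0$.

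The first step is to reduce to a strict inequality. Set
\begin{equation*}
w_\varepsilon(t,g):=v(t,g)-\varepsilon t,\qquad \varepsilon>0,
\end{equation*}
so that $\mathcal{L}w_\varepsilon=\mathcal{L}v+\varepsilon\geq \varepsilon>0$ in $\Omega$, while $w_\varepsilon\leq v\leq 0$ on the parabolic boundary $\bigl([0,T)\times\partial D\bigr)\cup\bigl(\{0\}\times D\bigr)$. Since $\overline{\Omega}=[0,T]\times \overline{D}$ is compact and $w_\varepsilon$ is continuous, $w_\varepsilon$ attains its maximum on $\overline{\Omega}$ at some point $(t_0,g_0)$.

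The second step is to derive a contradiction assuming $w_\varepsilon(t_0,g_0)>0$. Under this assumption the maximum cannot lie on the parabolic boundary, so either $(t_0,g_0)\in\Omega$ (interior in both variables) or $t_0=T$ with $g_0\in D$. In either case $g_0$ is in the interior of $D$, so for every $j=1,\ldots,4n-4$ the integral-curve argument gives $Y_j^2 w_\varepsilon(t_0,g_0)\leq 0$; summing yields $\sum_j Y_j^2 w_\varepsilon(t_0,g_0)\leq 0$. Moreover, $\partial_t w_\varepsilon(t_0,g_0)=0$ in the interior case and $\partial_t w_\varepsilon(t_0,g_0)\geq 0$ in the case $t_0=T$. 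Combining,
\begin{equation*}
\mathcal{L}w_\varepsilon(t_0,g_0)=\frac{1}{8(n-1)}\sum_{j=1}^{4n-4} Y_j^2 w_\varepsilon(t_0,g_0)-\partial_t w_\varepsilon(t_0,g_0)\leq 0,
\end{equation*}
contradicting $\mathcal{L}w_\varepsilon\geq\varepsilon>0$. Hence $w_\varepsilon\leq 0$ on $\overline{\Omega}$, which gives $v(t,g)\leq\varepsilon t\leq\varepsilon T$, and sending $\varepsilon\to 0^+$ yields $v\leq 0$ on $\Omega$.

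The only point that requires a little care is the second-derivative test along the vector fields $Y_j$: because the $Y_j$ are not coordinate derivatives but smooth (polynomial-coefficient) vector fields, one must justify $Y_j^2w_\varepsilon(t_0,g_0)\leq 0$ via the integral curve $\phi_j^s$, which is well-defined locally since $g_0$ is an interior point of $D$ and the coefficients of $Y_j$ are smooth. I expect no genuine obstacle here; the quaternionic Heisenberg structure enters only through the specific form of the $Y_j$'s, but the maximum-principle argument is purely local and needs only that each $Y_j$ is a smooth vector field on $\mathscr{H}^{n-1}$. The argument is also clearly pointwise, so it will extend to the non-smooth settings needed later provided $v$ is $C^2$ in a neighbourhood of the putative maximum, which is exactly the hypothesis $v\in C^2(\overline{\Omega})$.
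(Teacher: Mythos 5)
Your proof is correct and follows essentially the same route as the paper: perturb to make $\mathcal{L}$ strictly positive, locate an extremal point not on the parabolic boundary, and contradict via $\partial_t$-sign plus the second-derivative test along the integral curves of the $Y_j$ (the paper uses exactly this curve argument, with the curve $s\mapsto g^*\cdot(\ldots,0,s,0,\ldots)$). The only cosmetic difference is that the paper subtracts $\kappa_1 t+\kappa_2$ and works at the first time $v$ touches zero, while you work at the global maximum of $v-\varepsilon t$ and treat $t_0=T$ separately; both are the standard parabolic argument.
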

\begin{proof} If   replace $v$ by $v-\kappa_1t-\kappa_2$ with $ \kappa_1,\kappa_2>0$, we may assume $v|_{[0,T)\times\partial\Omega }< 0$, $v|_{ \{0\}\times\Omega }<0$ and $\mathcal{L}v> 0$.

Suppose that $v>0$ somewhere in $ {\Omega}$. Let $t^*:=\inf\{t\mid v(t,g)>0$ for some $g\in \overline{\Omega}\}$. By continuity
and $v$ negative on the boundary $[0,T)\times\partial\Omega\cup  \{0\}\times\Omega $, we see that there exists $(t^*, g^*) \in \Omega$
such that $v(t^*, g^*)=0$ and $t^*>0$. We must have $v(t,g )<0$ for $0<t<t^*$, $g\in\Omega$, and so $\partial_tv(t^*, g^*)\geq 0$.
On the other hand, $v(t^*,\cdot ) $ attains its maximum at $g^*$, which implies that $Y_jv(t^*, g^*)=0$ and
\begin{equation*}
   Y_j^2v(t^*, g^*)=\left.\frac {d^2}{ds^2} v(t^*,g^*(\ldots,0,s,0,\ldots) )\right|_{s=0}\leq 0,
\end{equation*}$j=1,\ldots,4n-4$, where $s$ appears in the $j$-th entry. Consequently, we get $\mathcal{L}v(t^*, g^*)\leq 0$, which contradicts  to $\mathcal{L}v> 0$ in $\Omega$. Thus $v-\kappa_1t-\kappa_2\leq 0$. Now letting $\kappa_1,\kappa_2\rightarrow 0+$, we get the result.
 \end{proof}

  \begin{prop}  \label{prop:heat-kernel-H1}  Suppose $f\in H^1(\mathscr U )$ and $h_t(g)$ is the heat kernel   $e^{-\frac t{8(n-1)}\triangle_H}$. Then (\ref{eq:heat-rep}) holds.
\end{prop}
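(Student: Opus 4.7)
The plan is to prove the representation first with a positive time-shift, via the maximum principle of Proposition~\ref{prop:max}, and then pass to the limit as the shift tends to zero. Fix $\varepsilon_0>0$ and set
\[
u_{\varepsilon_0}(t,g):=\int_{\mathscr H^{n-1}}h_t(g'^{-1}\cdot g)\,f(\varepsilon_0,g')\,dg',\qquad (t,g)\in\mathscr U.
\]
By Proposition~\ref{lem:bound} applied with $p=1$ we have $f(\varepsilon_0,\cdot)\in L^\infty(\mathscr H^{n-1})$, and by definition of $H^1(\mathscr U)$ also $f(\varepsilon_0,\cdot)\in L^1(\mathscr H^{n-1})$. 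Hence $u_{\varepsilon_0}$ is smooth on $\mathscr U$, satisfies $\mathcal L u_{\varepsilon_0}=0$, and extends smoothly to $\{t=0\}$ with boundary value $f(\varepsilon_0,\cdot)$, which is itself smooth since regular functions are real-analytic. The heart of the proof is the \emph{shifted identity} $f(t+\varepsilon_0,g)=u_{\varepsilon_0}(t,g)$ for every $(t,g)\in\mathscr U$.

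To prove this shifted identity I would work componentwise, writing $f=f_1+f_2\mathbf i+f_3\mathbf j+f_4\mathbf k$, with each scalar component satisfying $\mathcal L f_l=0$ by Theorem~\ref{prop:regular-sublap}. The difference $v(t,g):=f_l(t+\varepsilon_0,g)-(u_{\varepsilon_0})_l(t,g)$ is a smooth solution of $\mathcal L v=0$ on $[0,T]\times\mathscr H^{n-1}$ with $v(0,\cdot)\equiv 0$, and is uniformly bounded on the slab. For each fixed $t\in(0,T]$, $v(t,g)\to 0$ as $\|g\|\to\infty$: on the convolution side this is dominated convergence using $f(\varepsilon_0,\cdot)\in L^1$ and the uniform boundedness and decay of $h_t$; on the $f(t+\varepsilon_0,\cdot)$ side it follows from the $L^1$ control of slices $f(\varepsilon,\cdot)$ combined with subharmonicity of $|f|$ on $\mathcal U$ (Corollary~\ref{cor:subharmonic}, applicable since $p=1\ge 2/3$) via a submean-value argument over boxes. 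Given $\eta>0$ and $T>0$, choose $R$ so large that $|v|\le\eta$ on $[0,T]\times(\mathscr H^{n-1}\setminus B(0,R))$. Applying Proposition~\ref{prop:max} to both $v-\eta$ and $-v-\eta$ on $(0,T)\times B(0,R)$ (noting $\mathcal L(v\pm\eta)=0\ge 0$) yields $|v|\le\eta$ on that cylinder; since $\eta$ and $T$ are arbitrary, $v\equiv 0$, establishing the shifted identity.

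Finally I let $\varepsilon_0\to 0^+$. The family $\{f(\varepsilon_0,\cdot)\}_{\varepsilon_0>0}$ is bounded in $L^1(\mathscr H^{n-1})$, so by weak-$*$ compactness of bounded measures there exists a sequence $\varepsilon_k\to 0^+$ along which $f(\varepsilon_k,\cdot)$ converges to some finite measure $f^b$ on $\mathscr H^{n-1}$. For each fixed $(t,g)\in\mathscr U$ the map $g'\mapsto h_t(g'^{-1}\cdot g)$ belongs to $C_0(\mathscr H^{n-1})$, whence
\[
u_{\varepsilon_k}(t,g)\longrightarrow\int_{\mathscr H^{n-1}}h_t(g'^{-1}\cdot g)\,df^b(g'),
\]
while $f(t+\varepsilon_k,g)\to f(t,g)$ by continuity of $f$. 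Combined with the shifted identity, this yields the heat-kernel representation (\ref{eq:heat-rep}). Standard Fefferman--Stein-type arguments adapted to the heat maximal function identify $f^b$ with an $L^1$ density rather than a merely singular measure, and uniqueness of the radial limit (being determined by the representation formula) fixes $f^b$ independently of the subsequence.

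The main obstacle I expect is the uniqueness step for the shifted identity: Proposition~\ref{prop:max} is stated only on bounded spatial domains, so the delicate point is the uniform-in-$t$ spatial decay of $v(t,\cdot)$. The convolution side is routine, but the side of $f(t+\varepsilon_0,g)$ genuinely requires the subharmonicity of $|f|^p$ on $\mathcal U$; this is precisely the input that restricts the Hardy theory developed in this paper to $2/3<p\le 1$.
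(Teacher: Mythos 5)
Your proposal is correct and follows essentially the same route as the paper's proof: a shifted identity $f(t+\varepsilon_0,\cdot)=e^{-\frac{t}{8(n-1)}\triangle_H}f(\varepsilon_0,\cdot)$ proved componentwise via Proposition \ref{prop:max}, with the uniform spatial decay of the difference obtained from the heat-kernel side by $L^1$-decay and from the $f$-side by a (sub)mean-value argument over product boxes using $f\in L^1$ of a slab, followed by letting the shift tend to zero. The only cosmetic differences are that the paper uses the mean-value equality for the harmonic pull-back $\pi^*\circ\tau_{g_0}^*f$ rather than subharmonicity of $|f|$, and your weak-$*$ compactness treatment of the limit $\varepsilon_0\to0$ is somewhat more elaborate than (but no less sketchy than) the paper's direct passage to the radial limit $f^b$.
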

\begin{proof}
 Since $f_{(k)}(t ,g):=f (t+1/k,g) $ is smooth and satisfies the heat equation (\ref{eq:regular-sublap}) for   $ t\geq 0, g \in  \mathscr H^{n-1}$,
 and $f ( 1/k,\cdot)\in L^1(\mathscr H^{n-1})$, let
 \begin{equation*}
    \widehat{f}_{(k)}( t, g)=\int_{\mathscr H^{n-1}}h_t(g'^{-1}\cdot g)f \Big( \frac{1}{k},g'\Big)dg',
 \end{equation*}
 which is smooth in $\mathscr U$ by the heat kernel estimate  \cite[Theorem IV 4.2]{VSC},  and satisfies
   $ \mathcal{L}\widehat{f}_{(k)}=0$. To apply  maximum principle to each components, write $f_{(k)}=f_ {(k);1} +\mathbf{i }f_{(k);2} +\mathbf{j }f_{(k);3} +\mathbf{k} f_{(k);4}, $ $\widehat{f}_{(k)}=\widehat{f}_{(k);1} +\mathbf{i }\widehat{f}_{(k);2} +\mathbf{j }\widehat{f}_{(k);3 } +\mathbf{k} \widehat{f}_{(k);4 } $. Then,
 \begin{equation*}
    \mathcal{L}\left(f_{(k);j }-\widehat{f}_{(k);j }\right)=0\quad {\rm and} \quad \left.\left(f_{(k);j}-\widehat{f}_{(k);j}\right)\right|_{\{0\}\times \mathscr H^{n-1}}=0,
 \end{equation*} $j=1,2,3,4$.
 We claim that for given $T>0$ and $\varepsilon>0$, there exists $R>0$ such that
 \begin{equation}\label{eq:claim}
    |f_{(k);j}-\widehat{f}_{(k);j}|\leq \varepsilon \quad {\rm on} \quad  [0,T)\times \partial B(0,R).
 \end{equation}
   Then we can apply the maximum principle (Proposition \ref{prop:max}) to $f_{(k);j}-\widehat{f}_{(k);j}- \varepsilon$ to get $f_{(k);j}-\widehat{f}_{(k);j}\leq \varepsilon$. Now let $R\rightarrow\infty$ and $T\rightarrow\infty$, we get $f_{(k);j}\leq \widehat f_{(k);j}$ on $\mathscr U $. The same argument gives us $\widehat f_{(k);j} \leq {f}_{(k);j} $ on $  \mathscr U $. Hence $f_{(k)}= \widehat{f}_{(k)}$ on $ \mathscr U $, i.e.
 \begin{equation*}
    {f} \left( t+\frac 1k, g\right)=\int_{\mathscr H^{n-1}}h_t(g'^{-1}\cdot g)f \left( \frac 1k,g'\right)dg'.
 \end{equation*}
  Then letting $k\rightarrow +\infty$, we get
(\ref{eq:heat-rep}).

To prove the claim (\ref{eq:claim}), note that
\begin{equation}\label{eq:estimate1}
  \left |\int_{\mathscr H^{n-1}}h_t(g'^{-1}\cdot g)f \left ( \frac 1k, g'\right)dg'\right|\leq{\varepsilon\over 2}
\end{equation}by the heat kernel estimate again,
for $\|g\|\geq R$ and $t>\frac 1k$ for sufficiently large $R$, since $f  ( \frac 1k, \cdot )\in L^1(\mathscr H^{n-1})$.
For the estimate for $f_{(k)}$, we need to use the mean value formula for $\pi^* f$, which  is regular on $\mathcal U $  and
\begin{equation*}
   {\tau}_{\pi^{-1}(g_0)}^*\circ \pi^* f= \pi^*\circ\tau_{g_0}^* f
\end{equation*}
by definition of $\pi$ in (\ref{eq:pi}), where $ {\tau}_{\pi^{-1}(g_0)}$ is the corresponding translation of $\mathcal U $ with $ \pi^{-1}(g_0)\in\partial \mathcal U $.
Then given $\varepsilon>0$, if $R$ is sufficiently large, we have
\begin{equation}\label{eq:small}
   \int_{\|g_0^{-1}\cdot g\|<\frac 1k, |t-t_0|<\frac 1{2k}}|f( t, g)| dt dg \leq \int_{\|  h \|\geq R,  t\in(\frac 1{2k},T+\frac  2{k})}|f( t,g)| dt d g \leq\varepsilon,
\end{equation}
for $\|g_0\|>R+1,  t_0\in (\frac 1{ k},T +\frac 1{ k})$,
since $f\in L^1((\frac 1{2k},T+\frac 2{k})\times\mathscr H^{n-1})$ by $f\in H^1(\mathscr U )$. Note that
$\pi^*\circ\tau_{g_0}^* f$ is regular in the product $\Omega_k:=B_{\mathbb{H}}(t_0,\delta)\times B_{\mathbb{H}}(0,\delta)\times\ldots\times B_{\mathbb{H}}(0,\delta)\subset
\mathcal{U}_n$ for some $\delta>0$ only depending on $k$, satisfying
\begin{equation*}
\pi(\Omega_k)\subset \left\{ ( t,g  )\mid \| g\|<\frac 1k, |t-t_0|<\frac 1{2k}\right\}\subset \mathscr{U},
\end{equation*}
  where $B_{\mathbb{H}}(q,r)$ is a ball in $\mathbb{H}$. Hence,
\begin{align}\label{eq:estimate2}
  | f( t_0,g_0)|&=\left|\pi^*\circ\tau_{g_0}^* f( t_0,0)\right|
  =\frac 1{|\Omega_k|}\left|\int_{\Omega_k}\pi^*\circ\tau_{g_0}^*f( t,g)dt  dg \right|\\&
  \leq \frac 1{|\Omega_k|}\left|\int_{\|g \|<\frac 1k,|t-t_0|<\frac 1{2k} }\tau_{g_0}^* f( t,g) dt dg\right|\nonumber\\
  &
  \leq \frac 1{|\Omega_k|}\left|\int_{\|g_0^{-1}\cdot g\|<\frac 1k, |t-t_0|<\frac 1{2k} } f( t,g)dt dg \right|\nonumber\\
  & \leq{\varepsilon\over|\Omega_k|},\nonumber
  \end{align}
  for $\|g_0\|>R+1,  t_0\in (\frac 1{ k},T +\frac 1{ k})$, by (\ref{eq:small}).
The claim follows from estimates (\ref{eq:estimate1}) and (\ref{eq:estimate2}).
  \end{proof}

 \begin{prop}  \label{prop:heat-kernel-Hp}  Suppose $f\in H^p(\mathscr U )$  for   $\frac {2}{3} < p \leq 1$ and continuous on $\overline{\mathscr U }$.  Then for $\frac {2}{3} < q\leq p  $, we have
  \begin{equation}\label{eq:heat-rep-p}
   | f ( t,g )|^q \leq \int_{\mathscr H^{n-1}}h_t(g'^{-1}\cdot g)|f(0,g' )|^qdg'.
 \end{equation}
\end{prop}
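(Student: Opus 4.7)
The plan is to mimic the maximum-principle argument in the proof of Proposition \ref{prop:heat-kernel-H1}, with $u:=|f|^q$ in place of $f$ and the weaker $\mathcal{L}$-subcaloric inequality $\mathcal{L}u\geq 0$ in place of the heat equation $\mathcal{L}f=0$. Define
\[
\Phi(t,g):=\int_{\mathscr H^{n-1}}h_t(g'^{-1}\cdot g)\,|f(0,g')|^q\,dg',
\]
which is well-defined since continuity on $\overline{\mathscr U}$ and Fatou's lemma give $|f(0,\cdot)|\in L^p$, hence $|f(0,\cdot)|^q\in L^{p/q}$ with $p/q\geq 1$. Then $\mathcal{L}\Phi=0$ and $\Phi(0,\cdot)=u(0,\cdot)$, so $v:=u-\Phi$ satisfies $v(0,\cdot)=0$ and $\mathcal{L}v=\mathcal{L}u\geq 0$; the desired inequality $u\leq\Phi$ will follow from Proposition \ref{prop:max} applied on truncated domains, together with lateral boundary control and passage to the limit.

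The crux is the sub-caloric inequality $\mathcal{L}|f|^q\geq 0$ for $q\in[2/3,1]$. Using the regularity identity of Theorem \ref{prop:regular-sublap} in conjunction with the pointwise identities $\partial_t|f|^2=2\,\mathrm{Re}(\bar f\,\partial_t f)$ and $\triangle_H|f|^2=2\sum_j|Y_jf|^2+2\,\mathrm{Re}(\bar f\,\triangle_H f)$, the $\mathrm{Re}(\bar f\,\triangle_H f)$ terms cancel to give the exact identity
\[
\mathcal{L}|f|^2=\frac{1}{4(n-1)}\sum_{j=1}^{4n-4}|Y_jf|^2\geq 0.
\]
Writing $|f|^q=(|f|^2)^{q/2}$ and applying the chain rule,
\[
\mathcal{L}|f|^q=\frac{q}{8(n-1)}|f|^{q-2}\sum_j|Y_jf|^2+\frac{q(q-2)}{8(n-1)}|f|^{q-4}\sum_j\bigl(\mathrm{Re}(\bar f\,Y_jf)\bigr)^2,
\]
so for $q<2$ the non-negativity reduces to the pointwise bound
\[
\sum_j\bigl(\mathrm{Re}(\bar f\,Y_jf)\bigr)^2\leq\frac{|f|^2\sum_j|Y_jf|^2}{2-q},
\]
whose worst case $q=2/3$ requires the constant $3/4$. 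For each block $l=0,\ldots,n-2$, the constraint $\overline{Q}_{l+1}f=0$ of Proposition \ref{prop:regular-equiv} says that the quaternionic $4$-tuple $(Y_{4l+1}f,\ldots,Y_{4l+4}f)$ satisfies exactly the algebraic relation $w_1+\mathbf{i}w_2+\mathbf{j}w_3+\mathbf{k}w_4=0$ of the partial derivatives of a flat Cauchy--Fueter solution on $\mathbb{R}^4$, so the purely algebraic Stein--Weiss inequality underlying Corollary \ref{cor:subharmonic} applies blockwise, and summing over $l$ yields the claim.

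Once sub-caloricity is in hand, I would close the proof as in Proposition \ref{prop:heat-kernel-H1}. On $\Omega_{T,R}:=(0,T)\times B(0,R)$, the pointwise bound of Proposition \ref{lem:bound}, namely $|f(t,g)|\lesssim\|f\|_{H^p(\mathscr U)}\,t^{-(2n+1)/p}$, together with Gaussian decay of $h_t$ and $L^{p/q}$-integrability of $|f(0,\cdot)|^q$ away from compact sets, shows that both $u$ and $\Phi$ can be made smaller than any prescribed $\varepsilon>0$ on the lateral boundary $[0,T]\times\partial B(0,R)$ once $R$ is large, by arguments parallel to \eqref{eq:estimate1}--\eqref{eq:estimate2}. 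Applying Proposition \ref{prop:max} to $v-\varepsilon$ on $\Omega_{T,R}$ and then letting $R\to\infty$, $\varepsilon\to 0^+$, and $T\to\infty$ in turn gives $v\leq 0$ on $\mathscr U$, which is \eqref{eq:heat-rep-p}.

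The main obstacle is the horizontal Stein--Weiss inequality $\sum_j(\mathrm{Re}(\bar f\,Y_jf))^2\leq\tfrac{3}{4}|f|^2\sum_j|Y_jf|^2$, which is the algebraic heart of the $q\geq 2/3$ threshold: although its Euclidean analog is standard, one has to check carefully that the quaternionic algebra identity underlying Corollary \ref{cor:subharmonic} transfers intact to the horizontal derivatives $Y_j$ despite the twisting of the latter by the $b^\alpha$-matrices. A secondary technical point is that $|f|^q$ need not be $C^2$ at zeros of $f$ when $q<2$; this is handled by replacing $|f|^q$ throughout by $(|f|^2+\delta)^{q/2}$, for which the same cancellations produce a non-negative correction, and letting $\delta\to 0^+$ at the end.
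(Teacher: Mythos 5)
Your proposal is correct and, in outline, is the paper's own strategy: prove that $|f|^q$ is $\mathcal{L}$-subcaloric (off the zero set of $f$) via the Stein--Weiss algebraic inequality responsible for the exponent $2/3$, then compare with the heat extension of $|f(0,\cdot)|^q$ on truncated cylinders $(0,T)\times B(0,R)$, using the lateral estimates of Proposition \ref{prop:heat-kernel-H1}. Two devices differ, both legitimately. First, you obtain $\sum_j\bigl(\mathrm{Re}(\bar f\,Y_jf)\bigr)^2\leq\tfrac34|f|^2\sum_j|Y_jf|^2$ blockwise from $\overline{Q}_{l+1}f=0$, since each $4$-tuple $(Y_{4l+1}f,\dots,Y_{4l+4}f)$ satisfies the flat constraint $w_1+\mathbf{i}w_2+\mathbf{j}w_3+\mathbf{k}w_4=0$; the paper (Proposition \ref{prop:sub}) instead pulls back to $\mathcal U$ and applies Corollary \ref{cor:subharmonic} to $|\pi^*f|^q$ at a point where $Y_j=\partial_{x_j}$. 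These are equivalent: the inequality is pointwise and first order, and any constrained data is realized by a linear Cauchy--Fueter solution, so the twisting of the $Y_j$ is irrelevant. Second, you smooth with $(|f|^2+\delta)^{q/2}$, and indeed $\mathcal{L}(|f|^2+\delta)^{q/2}\geq\tfrac{q\delta}{8(n-1)}(|f|^2+\delta)^{q/2-2}\sum_j|Y_jf|^2\geq0$, whereas the paper avoids regularization by subtracting $\kappa t$ and showing a first-contact point cannot lie on $\{f=0\}$ because the heat integral is strictly positive; your route buys a smooth comparison at the cost of perturbed boundary data, so compare with $\Phi+\delta^{q/2}$ (using $(a+b)^{q/2}\leq a^{q/2}+b^{q/2}$ and $\mathcal{L}(\delta^{q/2})=0$) before letting $\delta\to0$. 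One small correction: Proposition \ref{lem:bound} is uniform in $g$ and cannot by itself make $|f|^q$ small on the lateral boundary; that smallness comes, as in \eqref{eq:estimate2}, from the sub-mean value inequality for $|\pi^*\circ\tau_{g_0}^*f|^q$ together with integrability of $|f|^p$ over slabs, which you also cite, so this is a misattribution rather than a gap.
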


We need the following parabolic version of subharmonicity of $|u|^p$ in the Euclidean case \cite[section 3.2.1 in chapter 7]{St70}.
 \begin{prop}  \label{prop:sub}  Suppose $f$ is regular on $\mathscr U  $. Then for any  $\frac {2}{3} \leq p \leq 1$, we have
 \begin{equation}\label{eq:p-subparabolic}
   \mathcal{ L}|f|^p( t,g  )\geq 0,
 \end{equation}
 for $( t,g  )\in\mathscr U$ with $f( t,g  )\neq 0$.
\end{prop}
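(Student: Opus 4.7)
My plan is to reduce the desired parabolic subharmonicity to a block-wise inequality and then invoke a pointwise Stein--Weiss gradient estimate. For each $l=0,\ldots,n-2$ introduce the block operator
\begin{equation*}
\tilde{\mathcal L}_l := \sum_{j=1}^{4}Y_{4l+j}^2 - 8\partial_t,
\end{equation*}
so that $\sum_{l=0}^{n-2}\tilde{\mathcal L}_l = \Delta_H - 8(n-1)\partial_t = 8(n-1)\mathcal L$. It is therefore enough to show $\tilde{\mathcal L}_l|f|^p\geq 0$ at every point where $f\neq 0$, for each $l$.

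The first ingredient is a block heat identity. The termwise computation in the proof of Theorem~\ref{prop:regular-sublap} shows $Q_{l+1}\overline Q_{l+1}=\sum_j Y_{4l+j}^2+8(\overline Q_0-\partial_t)$ for each $l$. Applying this to $f$ and using $\overline Q_{l+1}f=0$ and $\overline Q_0 f=0$ gives $\sum_j Y_{4l+j}^2 f=8\partial_t f$. Since $\tilde{\mathcal L}_l$ has real coefficients, each real component $f_i$ of $f=f_1+\mathbf i f_2+\mathbf j f_3+\mathbf k f_4$ separately satisfies $\tilde{\mathcal L}_l f_i=0$. Starting from $|f|^p=(\sum_i f_i^2)^{p/2}$ and applying the chain rule to $Y_{4l+j}^2$ and $\partial_t$, then using $\tilde{\mathcal L}_l f_i=0$ to cancel the cross term $\sum_i f_i\,\tilde{\mathcal L}_l f_i$, I obtain the clean identity
\begin{equation*}
\tilde{\mathcal L}_l|f|^p = p|f|^{p-2}\bigl[\tilde A_l+(p-2)\tilde C_l\bigr],\qquad \tilde A_l:=\sum_{i,j=1}^{4}(Y_{4l+j}f_i)^2,\quad \tilde C_l:=\sum_{j=1}^{4}(Y_{4l+j}|f|)^2.
\end{equation*}

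The second ingredient is the pointwise Stein--Weiss inequality $\tilde A_l\geq\tfrac43\tilde C_l$. Encode the block gradient as the real $4\times 4$ matrix $M^{(l)}_{ji}:=Y_{4l+j}f_i$. Expanding $\overline Q_{l+1}f=0$ into its four real quaternionic components produces four linear equations on the entries of $M^{(l)}$,
\begin{equation*}
M^{(l)}_{11}=M^{(l)}_{22}+M^{(l)}_{33}+M^{(l)}_{44},\quad M^{(l)}_{12}=-M^{(l)}_{21}-M^{(l)}_{34}+M^{(l)}_{43},\quad\ldots,
\end{equation*}
which coincide exactly with the algebraic constraints on the Jacobian of a Cauchy--Fueter-regular map $\mathbb R^4\to\mathbb H$. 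The pointwise Stein--Weiss lemma---the algebraic content driving the sharp exponent $p=2/3$ in Corollary~\ref{cor:subharmonic}---asserts that any $4\times 4$ real matrix $M$ obeying these constraints satisfies $\|M\|_F^2\geq\tfrac43|M\hat v|^2$ for every unit $\hat v\in\mathbb R^4$. Applied with $\hat v=f/|f|$ this gives $\tilde A_l=\|M^{(l)}\|_F^2\geq\tfrac43|M^{(l)}(f/|f|)|^2=\tfrac43\tilde C_l$.

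Putting the two ingredients together, for $p\geq\tfrac23$,
\begin{equation*}
\tilde A_l+(p-2)\tilde C_l\geq \tilde A_l-\tfrac43\tilde C_l+\bigl(p-\tfrac23\bigr)\tilde C_l\geq\bigl(p-\tfrac23\bigr)\tilde C_l\geq 0,
\end{equation*}
so $\tilde{\mathcal L}_l|f|^p\geq 0$ for each $l$ wherever $f\neq 0$, and summing over $l$ and dividing by $8(n-1)$ delivers $\mathcal L|f|^p\geq 0$. The principal obstacle is extracting the $\tfrac43$-gradient inequality for the non-commuting vector fields $Y_j$; the resolution is that at every point it is purely a statement about a finite-dimensional constrained quadratic form, and the constraints from $\overline Q_{l+1}f=0$ are identical to the Euclidean Cauchy--Fueter ones, so the algebraic Stein--Weiss bound transfers verbatim. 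An equivalent alternative is to apply Corollary~\ref{cor:subharmonic} directly to $F=f\circ\pi^{-1}$ on $\mathcal U$ (which is Cauchy--Fueter-regular in each quaternionic variable) and to read off the pointwise $\tfrac43$-Jacobian bound there, transferring it to $\mathscr U$ via $\pi$.
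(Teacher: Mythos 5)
Your proposal is correct, and it rests on the same two pillars as the paper's own proof: the heat-type equation satisfied by regular functions (the displayed computation in the proof of Theorem \ref{prop:regular-sublap} does give your blockwise identity $\sum_{j=1}^{4}Y_{4l+j}^2 f=8\partial_t f$ once $\overline Q_0f=0$ is used) and the Stein--Weiss $2/3$-subharmonicity. The execution, however, is packaged differently. The paper does not decompose into blocks: it computes $8(n-1)\mathcal L|f|^p$ globally by the chain rule and the heat equation, reduces to the point $(t,0)$ by left translation, and then observes that the resulting quadratic form in the $Y_jf$ coincides, because $Y_jf(t,0)=\partial_{x_j}(\pi^*f)(t,0)$, with the expression for $\sum_j\partial_{x_j}^2|u|^p\geq0$ coming from the Euclidean generalized Cauchy--Riemann system, so Corollary \ref{cor:subharmonic} is quoted as a black box. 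You instead isolate the algebraic content as a finite-dimensional inequality, $\|M\|_F^2\geq\tfrac43|M\hat v|^2$ for every $4\times4$ real matrix $M$ satisfying the Cauchy--Fueter constraints and every unit $\hat v$, and apply it blockwise with $\hat v=f/|f|$; this yields the slightly stronger conclusion $\tilde{\mathcal L}_l|f|^p\geq0$ for each block and avoids the translation/pullback bookkeeping. The one step you assert rather than prove is precisely that matrix inequality: it is not stated in this form in the paper (nor verbatim in Stein--Weiss), so it needs a proof or a precise derivation, but this is easy and closes no real gap. For instance, apply Corollary \ref{cor:subharmonic} with $p=2/3$ to the affine regular function $u(x)=\hat v+L(x)$ whose differential is $M$ and evaluate $\triangle|u|^{2/3}$ at $x=0$; or argue directly: after a right quaternionic rotation (permissible since $Fq$ is regular for any constant unit quaternion $q$, cf. Remark \ref{eq:right}) one may take $\hat v=e_1$, and then each of the four constraint equations expresses one entry of the first column of $M$ as a signed sum of three entries drawn from pairwise disjoint triples outside that column, so Cauchy--Schwarz gives $\|M\|_F^2\geq|Me_1|^2+\tfrac13|Me_1|^2=\tfrac43|Me_1|^2$. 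With that lemma substantiated, your argument is complete; what it buys over the paper's route is a self-contained pointwise statement and a per-block refinement, while the paper's route buys the convenience of never isolating the finite-dimensional inequality, at the cost of the translation-to-$(t,0)$ and pullback step.
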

\begin{proof}
Since $\tau_{g_0}^*  f $ is also a regular function for fixed $g_0 \in\mathscr H^{n-1}$ and $\mathcal{ L}$ is invariant under translations, we only need to show (\ref{eq:p-subparabolic}) at point $(t,0 )$. Let $F(q)= \pi ^*f(q)$ be the pull back function on $\mathcal{U}$.
Recall that
the Cauchy--Fueter equation  $
\overline\partial_{{q}_j }F=0$ is equivalent to the  generalized Cauchy--Riemann $\overline{\partial} u(x)=0$, where  $u=(u_0,u_1)$ is a $\mathbb{C}^2$-valued function with \begin{equation}\label{eq:u-F}
      u_0=F_{ { 1}}-
 \mathbf{i}F_{ { 2}}    ,\qquad
   u_1=  F_{ { 3}}+ \mathbf{i}F_{ { 4}}
  \end{equation}
 in (\ref{eq:dbar})-(\ref{eq:GCR}). Thus $\pi^*  f $ is also a  regular function,  and by Corollary \ref{cor:subharmonic}
  \begin{equation*}
     |u|^p=|\pi^*  f |^p,
  \end{equation*}
   for   $\frac {2}{3} < p \leq 1$  is subharmonic with respect to each  quaternionic variable; i.e.
  \begin{equation}\label{eq:subharmonic-p}
     \sum_{j=4l+1}^{4l+4} \partial_{x_j}^2 |u|^p\geq 0
  \end{equation}for $l=0,\ldots,n-1$.

On the other hand, we have \begin{equation}\label{eq:p-subharmonic0}\begin{split}  \partial_t |f|^p&=\frac p2 (f,f)^{\frac p2-1}((\partial_t f,f)+( f,\partial_t f)),
  \end{split} \end{equation}
  and
    \begin{equation*}
     Y_j|f|^p =\frac p2 (f,f)^{\frac p2-1}((Y_j f,f)+(f,Y_j f)),
 \end{equation*}where $(\cdot,\cdot)$ is the quaternionic inner product on $\mathbb{H}$ given by $(q,q')= \overline{q}q'$ for $q,q'\in \mathbb{H}$. Note that $(Y_j f,f)+(f,Y_j f)=2{\rm Re} (Y_j f,f)$.
  Then, we have
   \begin{equation}\label{eq:p-subharmonic00}\begin{split}
   \sum_{j=1}^{4n-4}  Y_j^2|f|^p=&\frac p2\left(\frac p2-1\right) (f,f)^{\frac p2-2}4 \sum_{j=1}^{4n-4} \left({\rm Re} (Y_j f,f) \right)^2\\&+\frac p2 (f,f)^{\frac p2-1}\left(\bigg( \sum_{j=1}^{4n-4} Y_j^2 f ,f\bigg)+\bigg( f, \sum_{j=1}^{4n-4} Y_j^2 f\bigg)+ 2\sum_{j=1}^{4n-4}|Y_j f|^2\right).
   \end{split} \end{equation}Then
   (\ref{eq:p-subharmonic00}) minus (\ref{eq:p-subharmonic0}) multiplied by $8(n-1)$ gives us
  \begin{equation}\label{eq:p-subharmonic}\begin{split}
   8(n-1) \mathcal{ L} |f|^p=&  p \left(  p-2 \right) (f,f)^{\frac p2-2}  \sum_{j=1}^{4n-4} \big({\rm Re} (Y_j f,f) \big)^2 + p  (f,f)^{\frac p2-1}  \sum_{j=1}^{4n-4}|Y_j f|^2
   \end{split} \end{equation}by Theorem  \ref{prop:regular-sublap}.

Following the above calculation, the subharmonicity (\ref{eq:subharmonic-p}) implies
  \begin{equation}\label{eq:p-subharmonic2}\begin{split}
   0\leq  \sum_{j=1}^{4n-4} \partial_{x_j}^2 |u|^p&=   p \left(  p-2 \right) (u,u)^{\frac p2-2}  \sum_{j=1}^{4n-4} \left({\rm Re} (\partial_{x_j} u,u) \right)^2 + p  (u,u)^{\frac p2-1}  \sum_{j=1}^{4n-4}|\partial_{x_j} u|^2\\
   &= p \left(  p-2 \right) (F,F)^{\frac p2-2}  \sum_{j=1}^{4n-4} \left({\rm Re} (\partial_{x_j} F,F) \right)^2 + p  (F,F)^{\frac p2-1}  \sum_{j=1}^{4n-4}|\partial_{x_j} F|^2
 \end{split} \end{equation}
 by using (\ref{eq:u-F}). Here, by abuse of notations, $(u,u)$ or $(\partial_{x_j} u,u)$ is  complex inner product on $\mathbb{C}$.
Apply (\ref{eq:p-subharmonic2}) to   (\ref{eq:p-subharmonic})  to get $\mathcal{ L} |f|^p(t ,0 )\geq 0$ by  $Y_j f(t ,0 )=\partial_{x_j} F(t ,0 )$ by $  \pi_*\partial_{x_j}|_{(t ,0 )}=  Y_j|_{(t ,0 )} $.

The proof of Proposition \ref{prop:sub}  is complete.
  \end{proof}

 {\it Proof of Proposition  \ref{prop:heat-kernel-Hp}}.
  The
  maximum principle can not be applied to $|f|^q$  directly because it is not  smooth on $|f|=0$. It is not easy to construct an auxiliary regular function as in \cite[section 3.2.1 in chapter 7]{St70} to overcome this difficulty. But the argument of the proof of
  maximum principle can   be applied in this case as follows. Denote $v (t ,g)=|f|^q(t ,g)-\kappa t$ for $\kappa>0$ and
  \begin{equation*}
    \widehat{v} ( t, g):=\int_{\mathscr H^{n-1}}h_t(g'^{-1}\cdot g)v (0 ,g)dg'.
 \end{equation*}
 Here $v (0 ,\cdot)=|f|^q(0,\cdot)$ is in $L^r(\mathscr H^{n-1})$ with $r=p/q$. Thus, $\widehat{v} ( t, g)$ is smooth and
  \begin{equation*}
     \mathcal{L}\widehat{v} =0.
  \end{equation*}
  As (\ref{eq:estimate1})-(\ref{eq:estimate2}) in the proof of Proposition \ref{prop:heat-kernel-H1}, we can show that for given $T>0$ and $\varepsilon>0$, there exists $R>0$ such that $ | {v} -\widehat{v} |\leq \varepsilon$ on $ [0,T) \times\partial B(0,R) $ with  the mean value formula replaced by the submean value inequality for $|\pi^*\circ\tau_{g_0}^* f|^q$ in (\ref{eq:estimate2}). Then,
 \begin{equation}\label{eq:L-+}
    \mathcal{L}({v} -\widehat{v} -2\varepsilon)(t,g )>0
 \end{equation} when $|{v} | (t,g )\neq 0$,
  by  Proposition \ref{prop:sub} and $\mathcal{L} (-\kappa t) =\kappa>0$. Moreover,  ${v} -\widehat{v} -2\varepsilon$ is negative on $ [0,T) \times\partial B(0,R) \cup \{0\}\times B(0,R)$.

  Now suppose that $({v} -\widehat{v} -2\varepsilon)(t,g  )\geq 0$ at some point in  $ (0,T) \times B(0,R)$. Then, as in the proof of Proposition \ref{prop:max}, we can find $(t^*,g^* ) \in  (0,T) \times B(0,R)$
such that $({v} -\widehat{v} -2\varepsilon)(t^*,g^* )=0$  and $({v} -\widehat{v} -2\varepsilon)(t,g  )<0$ for $0<t<t^*$, $g\in B(0,R)$. Note that   we must have $|{v} |(t^*,g^* )\neq0 $. Otherwise, we have $({v} -\widehat{v} -2\varepsilon)(t^*,g^* )<0$, since $\widehat{v} ( t, g) =\int_{\mathscr H^{n-1}}h_t(g'^{-1}\cdot g) |f|^q(0,\cdot)dg' >0$ on $\mathscr U$ by positivity of the heat kernel \cite[Theorem IV 4.3]{VSC}. Consequently, ${v} -\widehat{v} -2\varepsilon$ is smooth at $(t^*,g^* )$ and so we have $\mathcal{L}({v} -\widehat{v} -2\varepsilon)(t^*,g^* )\leq0$, which contradicts (\ref{eq:L-+}). Thus $({v} -\widehat{v} -2\varepsilon)(t,g  )<0$ on  $ (0,T) \times B(0,R)$ for any fixed $\varepsilon, T >0$ and sufficiently large $R$. The result follows.

The proof of Proposition  \ref{prop:heat-kernel-Hp} is complete.\qed

\section{Hardy space $H^p(\mathscr U)$ and the regular atomic decomposition }

We need some results about the Hardy space $H^p$ over homogeneous groups in {FoSt}, of which    the quaternionic Heisenberg group is a special case. Recall that if $f\in \mathcal{S}'(\mathscr H^{n-1})$ and $\phi\in  \mathcal{S}$, the {\it nontangential  maximal function } $M_\phi f$ is defined as
\begin{equation*}
   M_{\phi}f(g):=\sup\limits_{\|g'^{-1}\cdot g\|  <t } \big|f* \phi_t(g')\big|.
\end{equation*}
If $N\in\mathbb{ N}$, $f\in \mathcal{S}'(\mathscr H^{n-1})$, we define the {\it nontangential  grand maximal function } $M_{(N )}f$ by
\begin{equation*}
   M_{(N )}f(g) := \sup\limits_{\phi\in  \mathcal{S}, \|\phi\|_{(N)}\leq1} M_{\phi}f(g),
\end{equation*}
where
\begin{equation*}
   \|\phi\|_{(N)}:=\sup\limits_{ |I|  \leq N, g\in \mathscr H^{n-1}} (1+\|g\|)^{(N+1)(Q+1)}\big|X^I \phi (g)\big|,
\end{equation*} $Q=4n+2$.
If $0<p\leq 1$, define the {\it (boundary) Hardy space} $H^p (\mathscr H^{n-1})$ over the quaternionic Heisenberg group  to be
\begin{equation*}
   H^p (\mathscr H^{n-1}):=\left\{f \in \mathcal{S}'(\mathscr H^{n-1})\mid M_{(N_p)}f\in L^p(\mathscr H^{n-1})\right\},
\end{equation*}where $N_p:=[Q({1\over p}-1)]+1$.

If $u$ is a continuous function on $\mathscr U $, define the maximal function $u  ^*$ on $\mathscr H^{n-1}$ by
 \begin{equation}\label{eq:nontangential}
    u  ^*(g) := \sup\limits_{\|g'^{-1}\cdot  g\|^2 <t } \big|u(t, g')\big|.
 \end{equation}
\begin{prop} \label{prop:FS} \cite[Proposition 8.4]{FoSt}
Suppose that $0<p\leq 1$, and $f\in \mathcal{S}'(\mathscr H^{n-1})$. Then $f\in H^p (\mathscr H^{n-1})$ if and only if $ u  ^*\in L^p(\mathscr H^{n-1})$, where $u(g)=f* h_t(g)$.
 \end{prop}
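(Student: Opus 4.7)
The strategy is to identify $u^{*}$ with the nontangential maximal function associated to a single fixed Schwartz function, namely $h_{1}$, after which the proposition reduces to the standard single-function characterization of $H^{p}(\mathscr H^{n-1})$. Because $\triangle_H$ is homogeneous of degree $2$ under the dilations $\{\delta_r\}$, uniqueness of the heat kernel forces the scaling identity
\[
h_t(g) \;=\; t^{-Q/2}\, h_1(\delta_{1/\sqrt{t}}g) \;=\; (h_1)_{\sqrt{t}}(g),
\]
in the convention $\phi_s(g):=s^{-Q}\phi(\delta_{1/s}g)$. Gaussian upper bounds for $h_t$ and all of its left-invariant derivatives (Varopoulos--Saloff-Coste--Coulhon, \cite{VSC}) yield $h_1 \in \mathcal S(\mathscr H^{n-1})$ with finite $\|h_1\|_{(N_p)}$. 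Changing variable $s=\sqrt{t}$ in the definition \eqref{eq:nontangential} then produces the identification
\[
u^{*}(g) \;=\; \sup_{s>0}\sup_{\|g'^{-1}\cdot g\|<s}\bigl|(f*(h_1)_s)(g')\bigr| \;=\; M_{h_1}f(g).
\]

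With this reduction the forward implication is immediate: since $\|h_1\|_{(N_p)}<\infty$, the definition of the grand maximal function yields pointwise
\[
u^{*}(g) \;=\; M_{h_1}f(g) \;\leq\; \|h_1\|_{(N_p)}\, M_{(N_p)}f(g),
\]
hence $u^{*}\in L^{p}(\mathscr H^{n-1})$ whenever $f\in H^{p}(\mathscr H^{n-1})$.

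For the converse I would invoke the Folland--Stein machinery (Chapter 2 of \cite{FoSt}) asserting that on any stratified homogeneous group the $L^{p}$-norm of $M_{(N)}f$ is controlled by that of $M_{\phi}f$ for any single $\phi \in \mathcal S$ with $\int \phi \neq 0$. The argument proceeds in two stages: first, a Tauberian-type step exchanging tangential for nontangential maxima with respect to $\phi=h_1$, exploiting that $h_1$ is strictly positive with $\int h_1=1$ and that $\mathscr H^{n-1}$ is doubling; second, a resolution-of-the-identity step writing a generic $\phi \in \mathcal S$ as an integral $\int_0^\infty c(s)\,(h_1*\psi)_s\,ds$ built from $h_1$ and an auxiliary fixed Schwartz function $\psi$, which converts $M_{\phi}f$ into a weighted average of translates and dilates of $M_{h_1}f$ controllable by the Fefferman--Stein vector-valued maximal inequality on the doubling group. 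The main obstacle will be the Tauberian step: on a general homogeneous group it is not automatic that a single positive Schwartz function controls the entire grand maximal function, and the standard cure is to iterate using the semigroup identity $h_{t_1+t_2}=h_{t_1}*h_{t_2}$ to bootstrap the estimate. All required ingredients---doubling, Schwartz-class heat kernel, and semigroup property---hold on $\mathscr H^{n-1}$, so the Folland--Stein argument transfers verbatim to yield the reverse inclusion.
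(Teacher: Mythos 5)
The paper offers no proof of its own here --- the statement is quoted directly from \cite[Proposition 8.4]{FoSt} --- and your argument is exactly the standard one underlying that citation: the parabolic scaling $h_t=(h_1)_{\sqrt t}$ identifies $u^*$ with $M_{h_1}f$ (the apertures $\|g'^{-1}\cdot g\|^2<t$ and $\|g'^{-1}\cdot g\|<\sqrt t$ match), the forward direction follows from $\|h_1\|_{(N_p)}<\infty$, and the converse is the Folland--Stein one-function maximal characterization applied to $\phi=h_1\in\mathcal S$ with $\int h_1=1$; so the proposal is correct and essentially the same route. The only caveat is that your gloss on the converse overstates what is needed: the Folland--Stein theorem requires only $\int\phi\neq 0$, not positivity of $h_1$ or the semigroup identity, so those "bootstrapping" steps are unnecessary rather than genuine obstacles.
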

In Proposition \ref{prop:FS}, $ h_tf(g)$ is well defined since the heat kernel $h_t \in \mathcal{S}(\mathscr H^{n-1})$ {\cite[Proposition 1.74]{FoSt}.}
\begin{thm}\cite{FoSt}\label{lem-atom}
Suppose $0<p\leq 1$, then $H^p_{at}(\mathscr H^{n-1})= H^p(\mathscr H^{n-1} )$ and  $\|\cdot\|_{H^p_{at}(\mathscr H^{n-1})}\approx \|\cdot\|_{H^p(\mathscr H^{n-1})}$.
\end{thm}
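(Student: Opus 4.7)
The plan is to invoke the general theory of Hardy spaces on homogeneous Lie groups of Folland--Stein, since $\mathscr H^{n-1}$ is a stratified (hence homogeneous) group and the relevant structure (dilations $\delta_r$, homogeneous norm $\|\cdot\|$, left-invariant vector fields, homogeneous dimension $Q=4n+2$) has been set up in Section~2. The theorem is obtained by proving the two inclusions $H^p_{at}(\mathscr H^{n-1})\hookrightarrow H^p(\mathscr H^{n-1})$ and $H^p(\mathscr H^{n-1})\hookrightarrow H^p_{at}(\mathscr H^{n-1})$ with control of the (quasi-)norms.

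For the easy direction $H^p_{at}\hookrightarrow H^p$, first I would show that every $(p,\infty,\alpha)$-atom $a$ satisfies $\|M_{(N_p)}a\|_{L^p(\mathscr H^{n-1})}\lesssim 1$ uniformly. By left-translation invariance and the dilation property of $H^p$, one reduces to atoms supported in $B(0,1)$. Split the integral of $|M_{(N_p)}a|^p$ over a large ball $B(0,\lambda)$ and its complement. On $B(0,\lambda)$ use H\"older's inequality together with the $L^2$-boundedness of the nontangential grand maximal operator and the $L^2$ normalisation of $a$. On $\mathscr H^{n-1}\setminus B(0,\lambda)$, use the vanishing moment condition of order $\alpha\geq [Q(1/p-1)]$ together with Proposition~\ref{prop2} (the homogeneous Taylor remainder estimate) applied to the Schwartz test functions $\phi_t$, obtaining pointwise decay of $M_{(N_p)}a(g)$ of order $\|g\|^{-(Q+\alpha+1)}$, which is $p$-summable at infinity by the choice of $\alpha$. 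The claim for $H^p_{at}$ follows from the $p$-subadditivity $\|\sum f_j\lambda_j\|_{H^p}^p\leq \sum |\lambda_j|^p\|f_j\|_{H^p}^p$ (valid for $0<p\leq 1$) and Proposition~\ref{prop:FS}.

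For the harder direction $H^p\hookrightarrow H^p_{at}$, given $f\in H^p(\mathscr H^{n-1})$ (identified with a tempered distribution whose nontangential grand maximal function lies in $L^p$), I would perform a Calder\'on--Zygmund type stopping time decomposition at each dyadic level on the open set $\Omega_k:=\{g:M_{(N_p)}f(g)>2^k\}$. Using a Whitney decomposition of $\Omega_k$ into Heisenberg balls $\{B_k^j\}$ adapted to the homogeneous norm on $\mathscr H^{n-1}$, together with an associated smooth partition of unity, produce a bad part $b_k=\sum_j b_k^j$ and a good part $g_k$. The differences $f\chi_{\Omega_k}-f\chi_{\Omega_{k+1}}$ are then shown, after projecting away polynomials of homogeneous degree $\leq\alpha$ against each $b_k^j$, to split into $L^\infty$-normalised, mean-zero pieces supported on $B_k^j$ with the right size $\sim 2^k|B_k^j|^{-1/p}$; up to a harmless multiplicative constant these are $(p,\infty,\alpha)$-atoms with coefficients $\lambda_{k,j}=c\,2^k|B_k^j|^{1/p}$, and $\sum_{k,j}|\lambda_{k,j}|^p\lesssim \sum_k 2^{kp}|\Omega_k|\lesssim \|M_{(N_p)}f\|_{L^p}^p\sim\|f\|_{H^p}^p$ by the distribution function identity.

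The main obstacle is the construction and the convergence in $\mathcal{S}'$ of the atomic decomposition in the second step: one must verify that the telescoping sum $f=\sum_k(f\chi_{\Omega_k^c}-f\chi_{\Omega_{k-1}^c})$ (read in the distributional sense) assembles into an atomic series with the correct estimates, and one has to check carefully that the polynomial-subtraction step preserves the vanishing moment conditions of order $\alpha\geq [Q(1/p-1)]$ and that the smooth cut-offs associated to the Whitney covering produce molecules that are genuine atoms after normalisation. All of these steps are standard on stratified groups and have been carried out in \cite{FoSt}; this identifies their Hardy space (defined via $M_{(N_p)}$, or equivalently via the heat-semigroup maximal function of Proposition~\ref{prop:FS}) with the atomic space $H^p_{at}(\mathscr H^{n-1})$ defined above, yielding the stated equivalence of quasi-norms.
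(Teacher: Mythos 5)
Your proposal is correct and matches the paper's treatment: the paper gives no independent proof of this statement but simply cites the Folland--Stein theory of Hardy spaces on homogeneous groups, of which $\mathscr H^{n-1}$ is a special case, exactly as you do after sketching the standard two inclusions. Your outline of the atom-to-$H^p$ estimate and the Calder\'on--Zygmund/grand-maximal-function decomposition is the standard argument carried out in \cite{FoSt}, so deferring those steps to that reference is precisely what the paper intends.
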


 \begin{prop} \label{prop:bded}
 If $f\in H^p(\mathscr U )$ for $\frac{2}{3}<p\leq 1$, then $f(\varepsilon,\cdot)\in H^p (\mathscr H^{n-1})$ for any $\varepsilon>0$ and
 $$\|f(\varepsilon,\cdot)\|_{H^p (\mathscr H^{n-1})}\leq C\|f\|_{H^p(\mathscr U )}.$$
 \end{prop}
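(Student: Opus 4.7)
My plan is to apply Proposition~\ref{prop:FS} to reduce the claim ``$f(\varepsilon,\cdot)\in H^{p}(\mathscr H^{n-1})$'' to ``$u^{*}\in L^{p}(\mathscr H^{n-1})$'', where $\phi:=f(\varepsilon,\cdot)$ and $u(t,g):=(\phi* h_{t})(g)$, and then to derive the required $L^{p}$-bound on $u^{*}$ from the sub-caloric inequality of Proposition~\ref{prop:heat-kernel-Hp} combined with the $L^{p/q}$-boundedness of the Hardy--Littlewood maximal function for $q\in(2/3,p)$.

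First I would set $F_{\varepsilon}(t,g):=f(t+\varepsilon,g)$. Proposition~\ref{lem:bound} gives $\phi\in L^{\infty}(\mathscr H^{n-1})$, and $\|\phi\|_{L^{p}}\leq\|f\|_{H^{p}(\mathscr U)}$ is immediate from the definition of the Hardy norm, so in particular $\phi\in L^{2}(\mathscr H^{n-1})$. The function $F_{\varepsilon}$ is regular on $\mathscr U$, smooth on $\overline{\mathscr U}$, belongs to $H^{p}(\mathscr U)$ with norm at most $\|f\|_{H^{p}(\mathscr U)}$, and has boundary value $F_{\varepsilon}(0,\cdot)=\phi$; Proposition~\ref{prop:heat-kernel-Hp} therefore yields
\begin{equation*}
|F_{\varepsilon}(t,g)|^{q}\leq(|\phi|^{q}* h_{t})(g),\qquad (t,g)\in\mathscr U,\ q\in(2/3,p].
\end{equation*}

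Next I would identify $F_{\varepsilon}$ with $u$ on $\mathscr U$. Both are smooth solutions of $\mathcal{L}v=0$ (by Theorem~\ref{prop:regular-sublap}), both lie in $L^{2}(\mathscr H^{n-1})$ for each fixed $t>0$ (from $\phi\in L^{2}$ and $F_{\varepsilon}(t,\cdot)\in L^{p}\cap L^{\infty}$), and both have common initial datum $\phi$; uniqueness of the $L^{2}$-heat semigroup on the stratified group $\mathscr H^{n-1}$ then forces $F_{\varepsilon}=u$. Taking nontangential suprema in the sub-caloric inequality and invoking Gaussian bounds for $h_{t}$ on $\mathscr H^{n-1}$, I obtain
\[
(u^{*}(g))^{q}\leq\sup_{\|g'^{-1}\cdot g\|^{2}<t}(|\phi|^{q}* h_{t})(g')\leq C\,M(|\phi|^{q})(g),
\]
where $M$ is the Hardy--Littlewood maximal operator on $(\mathscr H^{n-1},d)$. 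Choosing $q\in(2/3,p)$, so that $p/q>1$, the $L^{p/q}$-boundedness of $M$ together with $\|\,|\phi|^{q}\,\|_{L^{p/q}}^{p/q}=\|\phi\|_{L^{p}}^{p}$ yields
\[
\|u^{*}\|_{L^{p}(\mathscr H^{n-1})}^{p}=\int_{\mathscr H^{n-1}}\bigl((u^{*})^{q}\bigr)^{p/q}dg\leq C\int_{\mathscr H^{n-1}}|\phi|^{p}dg\leq C\|f\|_{H^{p}(\mathscr U)}^{p},
\]
and Proposition~\ref{prop:FS} concludes the argument.

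The main obstacle is the identification $F_{\varepsilon}=u$. The $L^{1}$-based maximum-principle argument of Proposition~\ref{prop:heat-kernel-H1} is unavailable because neither $\phi$ nor $F_{\varepsilon}(t,\cdot)$ need be integrable when $p<1$; I would therefore work in the $L^{2}$ category, and the $L^{2}$-convergence $F_{\varepsilon}(t,\cdot)\to\phi$ as $t\to0^{+}$ (which is what turns the classical identity $\mathcal{L}F_{\varepsilon}=0$ into an $L^{2}$-strong Cauchy problem to which uniqueness applies) is established from pointwise convergence on compacta together with the uniform $L^{\infty}$-bound from Proposition~\ref{lem:bound} and the uniform tightness of $|F_{\varepsilon}(t,\cdot)|^{p}$ supplied by the sub-caloric inequality with $q=p$.
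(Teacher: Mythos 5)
Your overall strategy---the sub-caloric inequality of Proposition \ref{prop:heat-kernel-Hp} with some $q\in(\tfrac23,p)$, domination of the nontangential supremum of $|f(t+\varepsilon,\cdot)|^q$ by the Hardy--Littlewood maximal function of $|f(\varepsilon,\cdot)|^q$, the $L^{p/q}$-boundedness of $M$, and the characterisation in Proposition \ref{prop:FS}---is exactly the paper's, and that part of your argument is sound. The gap is in the identification $f(t+\varepsilon,\cdot)=f(\varepsilon,\cdot)\ast h_t$, and it stems from a false premise: you dismiss the route through Proposition \ref{prop:heat-kernel-H1} on the grounds that ``neither $\phi$ nor $F_\varepsilon(t,\cdot)$ need be integrable when $p<1$'', but they are integrable. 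By Proposition \ref{lem:bound}, $F_\varepsilon:=f(\varepsilon+\cdot,\cdot)$ is bounded on $\mathscr U$, so $|F_\varepsilon(t,g)|\le \|F_\varepsilon\|_{L^\infty(\mathscr U)}^{1-p}\,|F_\varepsilon(t,g)|^p$, whence $\sup_{t>0}\int_{\mathscr H^{n-1}}|F_\varepsilon(t,g)|\,dg\le \|F_\varepsilon\|_{L^\infty(\mathscr U)}^{1-p}\,\|f\|_{H^p(\mathscr U)}^p<\infty$. Thus $F_\varepsilon\in H^1(\mathscr U)$ and Proposition \ref{prop:heat-kernel-H1} applies verbatim to give the heat-kernel representation $f(t+\varepsilon,g)=\int_{\mathscr H^{n-1}}h_t(g'^{-1}\cdot g)f(\varepsilon,g')\,dg'$; this is precisely how the paper closes the step.

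Having discarded that route, you replace it by ``uniqueness of the $L^{2}$-heat semigroup'', applied to a classical solution of $\mathcal{L}v=0$ that you only know to lie in $L^{2}$ for each fixed $t$ and to converge to $\phi$ in $L^{2}$ as $t\to0^+$. As written this is an assertion rather than an argument: semigroup uniqueness concerns mild/semigroup solutions, and passing from a pointwise caloric function on the unbounded space $\mathscr U$ to the semigroup solution requires either a Phragm\'en--Lindel\"of type maximum principle for bounded caloric functions on $\mathscr H^{n-1}$ or an energy/duality argument with control of spatial decay; you supply neither, and within the paper the only available tool is Proposition \ref{prop:max} on bounded cylinders, whose extension to the unbounded setting (controlling the lateral boundary via $L^1$ information) is exactly the content, and the work, of the proof of Proposition \ref{prop:heat-kernel-H1}. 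So either you must prove such a uniqueness statement, essentially redoing that proposition, or you should simply observe the $H^1(\mathscr U)$ membership above and quote it. With that correction your proof coincides with the paper's.
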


 \begin{proof}
 Note that $f$ is smooth on $\mathscr U$ since $\pi^*f$ is harmonic.   For fixed $\varepsilon>0$, $f(\varepsilon+\cdot,\cdot)\in H^p(\mathscr U )$ by definition and is continuous on $\overline{\mathscr U}$. Apply     Proposition \ref {prop:heat-kernel-Hp} to it to get
 \begin{equation*}
 |f(t+\varepsilon,g)|^q\leq\int_{\mathscr H^{n-1}}h_t(g'^{-1}\cdot g) |f(\varepsilon,g')|^qdg'
 \end{equation*} if we choose $\frac{2}{3}<q<p$.
 Then,  $r=\frac{p}{q}>1$ and $|f(\varepsilon,\cdot)|^q\in L^r(\mathscr H^{n-1})$ and
 \begin{equation*}
 \sup_{|g^{-1}\cdot h|<t}|f(t+\varepsilon,g)|^q\leq  \sup_{|g^{-1}\cdot h|<t}\int_{\mathscr H^{n-1}}h_t(g'^{-1}\cdot g) |f(\varepsilon,g')|^qdg'\leq
 { C_0M\left({|f(\varepsilon,\cdot)|^q}\right)(h)},
 \end{equation*}
where $M$ is the Hardy--Littlewood maximal function on $\mathscr H^{n-1}$.

On the other hand,  $f(t+\varepsilon,g) =e^{-\frac{t}{8(n-1)}\Delta_H}f(\varepsilon,g)$
by Proposition \ref{prop:heat-kernel-H1} since we can see that $f(\varepsilon+\cdot,\cdot) \in H^1(\mathscr U )$ by its boundedness  on $\mathscr U  $ by Lemma \ref {lem:bound}.
Hence,
 \begin{align*}
 \left\|  \sup_{{|g^{-1}\cdot h|<t}}e^{-\frac{t}{8(n-1)}\Delta_H}f(\varepsilon,g) \right\|_{L^p(\mathscr H^{n-1})}
& =\left\|\sup_{{|g^{-1}\cdot h|<t}} |f(t+\varepsilon,g)|^q\right\|_{L^r(\mathscr H^{n-1})}
\leq   C_0\left\|M\left({|f(\varepsilon,\cdot)|^q}\right)(h)\right\|_{L^r(\mathscr H^{n-1})}  \\
&\leq C C_0\|f(\varepsilon,\cdot)\|_{L^p(\mathscr H^{n-1})}\leq C C_0\|f\|_{H^p(\mathscr U )}.
 \end{align*}
 Thus, $f(\varepsilon,\cdot)\in H^p (\mathscr H^{n-1}) $ by Proposition \ref{prop:FS} and their $H^p (\mathscr H^{n-1}) $ quasinorms are uniformly bounded.

The proof of Proposition \ref{prop:bded} is complete.
 \end{proof}

Recall that a quasinorm $\|\cdot\|_A$ of vector space $A$ is called \emph {$c$-norm} if
 $\|a+b\|_A\leq c\left(\|a\|_A+\|b\|_A\right).$

 \begin{lem}[\cite{BL}, [Lemma 3.10.1]\label{lem:cnorm}
 For a $c$-norm on vector space $A$, there exists a norm $\|\cdot\|^*_A$ on $A$ such that
 \begin{align}\label{cnorm}
 \|\cdot\|^*_A\leq \|\cdot\|^\rho_A\leq 2\|\cdot\|^*_A,
 \end{align}
 with $(2c)^\rho=2$.
 \end{lem}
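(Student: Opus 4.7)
The plan is to construct $\|\cdot\|^*_A$ explicitly as a Birnbaum--Orlicz-type infimum over finite decompositions, and then verify the required two-sided comparison by means of the classical Aoki--Rolewicz argument underlying \cite{BL}, Lemma 3.10.1.

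First, I would set
$$\|a\|^*_A \;:=\; \inf\Big\{\sum_{j=1}^N \|a_j\|_A^\rho : N\in\mathbb{N},\ a=\sum_{j=1}^N a_j\Big\},$$
where the infimum runs over all finite representations of $a$ as a sum of elements of $A$. Three properties are then essentially immediate: (i) subadditivity $\|a+b\|^*_A\le \|a\|^*_A+\|b\|^*_A$, obtained by concatenating near-optimal decompositions of $a$ and $b$; (ii) $\rho$-homogeneity $\|\lambda a\|^*_A=|\lambda|^\rho\|a\|^*_A$, directly from the $1$-homogeneity of $\|\cdot\|_A$; and (iii) the easy direction $\|a\|^*_A\le\|a\|_A^\rho$, from the trivial one-term decomposition $a=a$. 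Thus $\|\cdot\|^*_A$ is a $\rho$-homogeneous subadditive functional, which is the kind of ``norm'' required in the statement.

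The main step is the reverse inequality $\|a\|_A^\rho\le 2\|a\|^*_A$. Concretely, I would establish the Aoki--Rolewicz bound: for \emph{every} finite decomposition $a=\sum_{j=1}^N a_j$,
$$\|a\|_A^\rho \;\le\; 2\sum_{j=1}^N\|a_j\|_A^\rho,$$
after which passing to the infimum on the right yields the desired estimate. I would prove this by dyadic grouping in magnitude. After normalizing $\sum_j\|a_j\|_A^\rho=1$, I would partition the indices into blocks $J_k:=\{j:2^{-k-1}<\|a_j\|_A^\rho\le 2^{-k}\}$, with cardinality bound $|J_k|\le 2^{k+1}S_k$ where $S_k:=\sum_{j\in J_k}\|a_j\|_A^\rho$ and $\sum_k S_k=1$. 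Each block sum $b_k:=\sum_{j\in J_k}a_j$ is controlled by a binary pairing: iterating $\|x+y\|_A\le c(\|x\|_A+\|y\|_A)$ to combine at most $|J_k|$ terms yields $\|b_k\|_A\le (2c)^{\lceil\log_2|J_k|\rceil}\max_{j\in J_k}\|a_j\|_A$. Inserting $\max_{j\in J_k}\|a_j\|_A\le 2^{-k/\rho}$ together with the defining relation $(2c)^\rho=2$, i.e.\ $2c=2^{1/\rho}$, the $k$-dependent factors collapse and one obtains a geometric estimate for $\|b_k\|_A^\rho$ in terms of $S_k$.

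The main obstacle is then the final reassembly $a=\sum_k b_k$ without reintroducing a constant that depends on the (a priori unbounded) number of nonempty blocks. This is dealt with by a second level of dyadic pairing applied to the $b_k$'s themselves, again driven by the critical identity $(2c)^\rho=2$, which is precisely what makes the resulting geometric series converge to the sharp constant~$2$. This completes the construction and is the content of the cited Bergh--L\"ofstr\"om lemma; once Aoki--Rolewicz is in place, the equivalence $\|\cdot\|^*_A\le\|\cdot\|_A^\rho\le 2\|\cdot\|^*_A$ follows as stated.
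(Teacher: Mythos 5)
The paper does not actually prove this lemma --- it is quoted verbatim from Bergh--L\"ofstr\"om \cite{BL}, Lemma 3.10.1 --- so the only thing to assess is whether your reconstruction is sound. Your setup is the standard one and matches \cite{BL}: define $\|a\|^*_A$ as the infimum of $\sum_j\|a_j\|_A^\rho$ over finite decompositions, note subadditivity and $\|a\|^*_A\le\|a\|_A^\rho$, and reduce everything to the key inequality $\|\sum_{j=1}^N a_j\|_A^\rho\le 2\sum_{j=1}^N\|a_j\|_A^\rho$. The gap is in your proof of that key inequality. Your magnitude blocks $J_k$ give, after the binary-tree pairing, only $\|b_k\|_A^\rho\le 2^{\lceil\log_2|J_k|\rceil}2^{-k}\le 2|J_k|2^{-k}\le 4S_k$; this is \emph{not} geometric in $k$ (indeed $\|b_k\|_A^\rho$ can be of size $\approx 1$ for arbitrarily many values of $k$, e.g.\ when $S_k=1/N$ for $N$ consecutive levels), so there is no geometric series to sum in the final reassembly. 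The ``second level of dyadic pairing applied to the $b_k$'s'' is exactly the problem you started with --- a sum of finitely many elements with controlled $\ell^\rho$-mass --- so as described the argument is circular, and in any case the factor $4$ already lost per block is incompatible with the sharp constant $2$ in the statement; iterating the scheme multiplies constants rather than collapsing them.

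The mechanism that actually closes the argument (and is what \cite{BL} exploits) is different: normalize $\sum_j\|a_j\|_A^\rho=1$, round each weight $\|a_j\|_A^\rho$ \emph{up} to a dyadic value $2^{-k_j}$ (so the total rounded weight is $<2$), and then merge summands pairwise \emph{only when they carry the same dyadic value}, noting that two elements of norm $\le 2^{-k/\rho}$ combine, via $2c=2^{1/\rho}$, into one element of norm $\le 2^{-(k-1)/\rho}$, i.e.\ of the next dyadic value --- a binary ``carrying'' scheme that preserves total weight. When no two pieces share a level, the weight bound $<2$ forces the surviving pieces to occupy pairwise distinct levels $m_0<m_1<\cdots$ with $m_i\ge i$, and only then does the nested quasi-triangle inequality produce a genuine geometric series: $\|\sum_i z_i\|_A\le\sum_i c^{i+1}2^{-m_i/\rho}\le 2^{1/\rho-1}\sum_{i\ge0}2^{-i}=2^{1/\rho}$, which is precisely the constant $2$ after raising to the power $\rho$. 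Your outline contains the right ingredients (dyadic structure and the identity $(2c)^\rho=2$) but not this carrying step, and without it the cross-level summation --- the step you yourself flag as the main obstacle --- is asserted rather than proved; as written the proof does not go through.
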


We now turn to the proof of Theorem \ref{thm2}.

\begin{proof}[Proof of Theorem \ref{thm2}]
(1) Suppose $f\in H^p(  \mathscr U )$. We now prove that $f\in H_{at} ^p(  \mathscr U )$.

To begin with,
for $\mathfrak f \in L^1(\mathscr H^{n-1})\cap L^\infty (\mathscr H^{n-1})$ and fixed $(t,g)\in\mathscr U$, let
\begin{equation*}
   S_{(t,g)}(\mathfrak f ):=\mathcal P(\mathfrak f ) (t,g)
\end{equation*}
  be defined as the integral of the product of the bounded function $
 K((t,g), \cdot)
$ and $\mathfrak f $.  Thus,   $S_{(t,g)}$ is a linear functional on $H ^p(\mathscr H^{n-1})\cap L^1(\mathscr H^{n-1})\cap L^\infty(\mathscr H^{n-1})$,  a dense subspace of the quasi-Banach space $ H^p(\mathscr H^{n-1})$ by \cite[Theorem 3.33] {FoSt}. We claim that $S_{(t,g)}$ is continuous on $H ^p(\mathscr H^{n-1})\cap L^1(\mathscr H^{n-1})\cap L^\infty(\mathscr H^{n-1})$ with respect to the $H ^p(\mathscr H^{n-1})$  norm. Then it can be naturally extended to a continuous linear functional on $H^p(\mathscr H^{n-1})$.

To prove the claim for $\mathfrak f =\sum_{\alpha=1}^4 \mathfrak f _\alpha \mathbf{i}_\alpha\in H ^p(\mathscr H^{n-1})\cap L^1(\mathscr H^{n-1})\cap L^\infty(\mathscr H^{n-1})$ ($\mathbf{i}_1=1, \mathbf{i}_2=\mathbf{i},\mathbf{i}_3=\mathbf{j},\mathbf{i}_4=\mathbf{k}$), note that scalar functions $\mathfrak f _\alpha \in H ^p(\mathscr H^{n-1})\cap L^1(\mathscr H^{n-1})\cap L^\infty(\mathscr H^{n-1})$ has the Calder\'on--Zygmund decomposition (\cite[Section B in chapter 3] {FoSt})
 $$\mathfrak f _\alpha = g_\alpha ^k+\sum_i b^k_{\alpha;i}$$  of degree $a$ and height $2^k$, where $b^k_{\alpha;i}$ is supported on $B(g_j,2r_j)$ with
\begin{equation*}
   \cup B(g_j,r_j)=\Omega_\alpha^k:=\{g\in \mathscr H^{n-1}; M_{(N)}f_\alpha(g)>2^k\}  , \qquad B(g_j,T_2 r_j)\cap (\Omega_\alpha^k)^c=\varnothing
\end{equation*}
  and no point of $\Omega_\alpha^k$ lies in more than $L$ of the balls $B(g_j,T_2 r_j)$.

Note also that by \cite[Theorem 3.17] {FoSt}, $ g_\alpha ^k\to \mathfrak f _\alpha$ in  $H ^p(\mathscr H^{n-1})$ as $k\to +\infty$ and by \cite[Theorem 3.20] {FoSt} $ g_\alpha ^k\to0$ uniformly as $k\to -\infty$.
Hence, $\mathfrak f _\alpha $
has the following decomposition  \cite[Section B in chapter 3] {FoSt}
\begin{equation}\label{eq:sum-g}
   \mathfrak f _\alpha =\sum_{k=-\infty}^{\infty} (g_\alpha ^{k+1}-g_\alpha ^k).
\end{equation}

We point out that  the summation \eqref{eq:sum-g} is only taken over $k\leq N_\alpha$ for some $  N_\alpha$ depending on $ \|f_\alpha\|_{L^\infty(\mathscr H^{n-1})} $,  because $g_\alpha ^{k+1}-g_\alpha ^k=0$ for $k>N_\alpha$ by $\Omega_\alpha^k=\varnothing$ for such $k$. That is,
\begin{equation}\label{eq:sum-g cut}
   \mathfrak f _\alpha =\sum_{k=-\infty}^{N_\alpha} (g_\alpha ^{k+1}-g_\alpha ^k).
\end{equation}
Moreover, we have that
\begin{equation}\label{eq:sum-g1}
   g_\alpha ^{k+1}-g_\alpha ^k=\sum_i \lambda^k_{\alpha;i} a^k_{\alpha;i}
\end{equation}
with $a^k_{\alpha;i}$ being a $(p,\infty,\alpha)$-atom supported on $B(g_i,T_2 r_i)$, where $\lambda^k_{\alpha;i}\in\mathbb R$ and
\begin{equation}\label{eq:estimate-atom}
\|\lambda^k_{\alpha;i} a^k_{\alpha;i}\|_{L^\infty(\mathscr H^{n-1})}\leq C_2 2^k,\qquad
   \sum_k\sum_i|\lambda^k_{\alpha;i} |^p\leq C_4\|Mf_\alpha\|_{L^p(\mathscr H^{n-1})}^p.
\end{equation}
for some absolute constants $C_2,C_4>0$.

Note that
\begin{align}\label{eq:sum-g1}
  \sum_{k=-\infty}^{N_\alpha}  \sum_i \|\lambda^k_{\alpha;i} a^k_{\alpha;i}\|^2_{L^2(\mathscr H^{n-1})}&\leq\sum_{k=-\infty}^{N_\alpha}  \sum_i C_2 2^{2k}|B(g_j,T_2 r_j)|
  \leq C_2 L\sum_{k=-\infty}^{N_\alpha}    2^{2k}|\Omega_\alpha^k |\\
  &\leq 2^{ N_\alpha(2-p )}C_2  L\sum_{k=-\infty}^{N_\alpha}    2^{ k(p-1)}|\Omega_\alpha^k |2^{k-1}\nonumber\\
  &\leq  2^{ N_\alpha(2-p )}C_2  L\int_{0}^{+\infty }s^{p-1}\left|\left\{g\mid M_{(N)}f_\alpha(g)>s\right\}\right|ds\nonumber\\
  &=2^{ N_\alpha(2-p )}C_2  L\|M_{(N)}f_\alpha\|_{L^p(\mathscr H^{n-1})}^p\nonumber\\
  &\approx
  2^{ N_\alpha(2-p )}C_2  L\| f_\alpha\|_{H ^p(\mathscr H^{n-1})}^p\nonumber\\
  & <\infty,\nonumber
\end{align}
by $f_\alpha \in H ^p(\mathscr H^{n-1}) \cap L^1(\mathscr H^{n-1}) \cap L^\infty(\mathscr H^{n-1}) $.
 Thus, we have
 \begin{equation*}
    \sum_{\alpha=1}^4\sum_{k=-\infty}^{N_\alpha}  \sum_i  a^k_{\alpha;i}\lambda^k_{\alpha;i}\mathbf{i}_\alpha= f
 \end{equation*}  in $L^2(\mathscr H^{n-1})$, and by the continuity of $S_{(t,g)}$ on  $L^2(\mathscr H^{n-1})$, we find that
 \begin{equation*}\begin{split}
    S_{(t,g)}(f) &= \sum_{\alpha } \sum_{k=-\infty}^{N_\alpha}  \sum_i   S_{(t,g)}(a^k_{\alpha;i})\lambda^k_{\alpha;i}\mathbf{i}_\alpha
          \end{split}\end{equation*}for $f \in H ^p(\mathscr H^{n-1})\cap L^1(\mathscr H^{n-1})\cap L^\infty(\mathscr H^{n-1})$.
               Then we can apply  this identity   to get
       \begin{align*}
   \left|S_{(t,g)}(f)\right|&\leq
  \sum_{\alpha } \sum_{k=-\infty}^{N_\alpha}  \sum_i \left|\mathcal P(a^k_{\alpha;i})(t,g) \lambda^k_{\alpha;i}\right|\leq C\sum_{\alpha } \sum_{k=-\infty}^{N_\alpha}  \sum_i \|\mathcal P(a^k_{\alpha;i})\|_{H^p(\mathscr U)}t^{-\frac{2n+1}{p}} \left|  \lambda^k_{\alpha;i}\right|\\
   &\leq CC_{p,n,\alpha}t^{-\frac{2n+1}{p}}\left(\sum_{\alpha } \sum_{k=-\infty}^{N_\alpha}  \sum_i \left|  \lambda^k_{\alpha;i}\right|^p\right)^{\frac 1p}\\
   &\approx CC_{p,n,\alpha}t^{-\frac{2n+1}{p}}\| f\|_{H ^p(\mathscr H^{n-1}) },
\end{align*}
by using Lemma \ref {lem:bound}, Proposition \ref {prop0}  and \eqref{eq:estimate-atom}.
  Thus $S_{(t,g)}$ is bounded on $H ^p(\mathscr H^{n-1}) \cap L^1(\mathscr H^{n-1}) \cap L^\infty(\mathscr H^{n-1}) $ with respect to the $H ^p(\mathscr H^{n-1})$  norm.

We now apply Lemma \ref{lem:cnorm} to the  quasi-Banach space $H ^p(\mathscr H^{n-1})$ to get   a norm $\|\cdot\|^*_{H ^p(\mathscr H^{n-1})}$. Then by \eqref{cnorm},  it is obvious that $\big(H ^p(\mathscr H^{n-1}), \|\cdot\|^*_{H ^p(\mathscr H^{n-1})} \big)$ is complete. Namely, $\big(H ^p(\mathscr H^{n-1}), \|\cdot\|^*_{H ^p(\mathscr H^{n-1})} \big)$ is a Banach space.

By Proposition \ref{prop:bded}, we see that $\{f(\varepsilon,\cdot) \}_{\varepsilon>0}$ is a bounded set in the quasi-Banach space $H ^p(\mathscr H^{n-1})$ if  $f\in H^p(\mathscr U )$ for $\frac{2}{3}<p\leq 1$. It is also bounded in
$H ^p(\mathscr H^{n-1})$ under the norm $\|\cdot\|^*_{H ^p(\mathscr H^{n-1})}$. It follows from Banach--Alaoglu theorem that there exists a
 subsequence $\{f(\varepsilon_k,\cdot) \}$  weakly convergent to some $f^b$ in $\big (H ^p(\mathscr H^{n-1}), \|\cdot\|^*_{H ^p(\mathscr H^{n-1})} \big)$. Then by \eqref{cnorm} we can obtain that $\{f(\varepsilon_k,\cdot) \}$ is also weakly convergent to the same $f^b$ in the quasi-Banach space  $H ^p(\mathscr H^{n-1})$, since every continuous linear functional on the quasi-Banach space $H ^p(\mathscr H^{n-1})$ must be continuous under the norm $\|\cdot\|^*_{H ^p(\mathscr H^{n-1})}$ by \eqref{cnorm}.

 Note that $f(\varepsilon_k+t,g)$ is uniformly bounded on $\mathscr U$  by   Lemma \ref {lem:bound}, and so it is in $H^2(\mathscr U)$.
Thus we have $(\mathcal Pf(\varepsilon_k ,\cdot))(t,g)=f(\varepsilon_k+t,g)$ by the reproducing formula. Now apply $S_{(t,g)}$ to $\{f(\varepsilon_k,\cdot) \}$ to get
 \begin{align*}
 S_{(t,g)}(f^b)&=\lim_{k\to\infty}S_{(t,g)}f(\varepsilon_k,\cdot)=\lim_{k\to\infty}\mathcal P\big(f(\varepsilon_k,\cdot)\big)(t,g)
 =\lim_{k\to\infty}f(\varepsilon_k+t,g)
 =f(t,g).
 \end{align*}
On the other hand, since $f^b$ in $\big (H ^p(\mathscr H^{n-1}), \|\cdot\|^*_{H  ^p(\mathscr H^{n-1})} \big)$, we have the atomic decomposition $f^b
 =\sum_k  a_k \lambda_k $ by Theorem \ref{lem-atom}, and
 $$ S_{(t,g)}(f^b) 
 =\sum_k \mathcal P(a_k)(t,g)\lambda_k,$$ 
 by continuity of $S_{(t,g)}$ on $H ^p(\mathscr H^{n-1})$.
Consequently,  $f(t,g)=\sum\limits_k \mathcal P(a_k)(t,g)\lambda_k$ for each point $ (t,g)\in\mathscr U $, i.e., $f$ is in $   H_{at} ^p(\mathscr U)$.

 \color{black}
 \medskip

(2) Suppose $u\in H_{at} ^p(  \mathscr U )$. We now prove that $u\in H^p(  \mathscr U )$.

  Let $u=\sum_{j=1}^\infty A_j{ {\lambda_j}} \in H_{at} ^p(\mathscr U )$ such that $\sum_{j=1}^\infty |\lambda_j|^p\approx \|u\|_{H_{at} ^p(\mathscr U )}$, where $A_j    $'s are regular $p$-atoms; i.e.
there exist $(p,\infty,\alpha)$-atoms $a_j$ on $\mathscr H^{n-1}$  such that
    $A_j = \mathcal P(a_j)  $.
    By  Proposition \ref{prop0},
we see that $A_j\in H^p(\mathscr U )$ with $\|A_j\|_{H^p(\mathscr U )}\leq C_{p,n,\alpha}$ for all $j$.
  Apply Proposition \ref {lem:bound} to $A_j$ to see that $ |A_j(\varepsilon,g )|\leq  CC_{p,n,\alpha}\varepsilon^{-\frac {2n+1}p}$, and so $\sum_{j=1}^\infty A_j\lambda_j $
  converges uniformly on compact subset of $\mathscr U$ and defines a regular function on $\mathscr U$ and
  \begin{equation*}
   \left | \sum_{j=1}^\infty A_j( \varepsilon,g)\lambda_j \right|\leq  CC_{p,n,\alpha}\varepsilon^{-\frac {2n+1}p}\sum_{j=1}^\infty |\lambda_j|.
  \end{equation*}
  Moreover,
  \begin{align*}
     \int_{\mathscr H^{n-1} }\left | \sum_{j=1}^\infty A_j( \varepsilon,g)\lambda_j\right|^pdg&\leq  \int_{\mathscr H^{n-1} } \sum_{j=1}^\infty   | \lambda_j|^p| A_j( \varepsilon,g) |^pdg
     \leq \sum_{j=1}^\infty   | \lambda_j|^p \int_{\mathscr H^{n-1} } |A_j( \varepsilon,g) |^pdg\\&
      \leq C_{p,n,\alpha}^p\sum_{j=1}^\infty   | \lambda_j|^p \\
      & \approx C_{p,n,\alpha}^p\|u\|_{H_{at}^p( \mathscr U )}.
\end{align*}
Namely, $u=\sum_{j=1}^\infty A_j\lambda_j \in  H^p( \mathscr U ) $ with
$
   \|u\|_{H^p(  \mathscr U )}\leq C_p \|u\|_{H_{at} ^p( \mathscr U )}
$.

The proof of Theorem \ref{thm2} is complete.
  \end{proof}

\section{Singular value estimates of the commutator $[b,\mathcal P]$: proof of Theorem \ref{schatten}}

Based on Theorems \ref{main1} and \ref{main3}, and the recent result in \cite{FLL}, we see that (1) in Theorem \ref{schatten} holds.\\

We now prove (2) in Theorem \ref{schatten}.
The sufficient condition is obvious, since $[b,\mathcal P]=0$ when $b$ is a constant. We now prove necessary condition. To show this,
 it suffices to consider  the critical case $p=4n+2$, by the inclusion $ S^p\subset S^{2n+2}$ for $p<2n+2$.

\begin{lem}\label{signlemma}
There exists a positive integer $\mathfrak b$ such that for any tile $T\in \tile_{k}$ and $a_{j}=\pm 1$ ($j=1,2,\cdots,4n-4$), there are tiles $T^{\prime}\in\tile_{k-\mathfrak b}$, $T^{\prime\prime}\in\tile_{k-\mathfrak b}$ such that $T^{\prime}\subset T$,
$T^{\prime\prime}\subset T$ and if $g=(g_{1},\cdots,g_{4n-4},\bm t)\in T^{\prime\prime}$, $h=(h_{1},\cdots,h_{4n-4},\bm t^{\prime})\in T^{\prime}$, then $a_{j}(g_{j}-h_{j})\gtrsim {\rm width}(T)$ $(j=1,2,\ldots,4n-4)$.
\end{lem}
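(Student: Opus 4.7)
The plan is to realize $T^{\prime\prime}$ and $T^{\prime}$ as sub-tiles of $T$ sitting at two diagonally opposite horizontal corners of $T$, the corners being chosen according to the prescribed sign pattern $(a_j)$. This is feasible because, although the multiplication law \eqref{law} twists the vertical coordinates, it acts as pure Euclidean translation on the horizontal coordinates $\bm y$; combined with the fact that the $\bm y$-projection of the basic tile $A$ is exactly $[0,1]^{4n-4}$, the horizontal projections of the tiles behave like ordinary Euclidean dyadic cubes.

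First I would take $\mathfrak b=2$. By iterating Lemma \ref{thm:Heisenberg-grid}(3) twice, $T\in\tile_k$ is a disjoint union of $4^{4n+2}$ sub-tiles in $\tile_{k-2}$. Iterating the self-similar relation \eqref{A} twice and using that $\delta_{r}$ acts diagonally while the product law is additive in $\bm y$, one sees that the horizontal projection of each such sub-tile is an axis-aligned cube of side $2^{k-2}$, and that these $4^{4n-4}$ distinct horizontal projections form a grid partition of the horizontal projection of $T$, which is itself an axis-aligned cube of side $2^k$. Label these horizontal cells by $\bm c=(c_1,\ldots,c_{4n-4})\in\{0,1,2,3\}^{4n-4}$, so that the cell indexed by $\bm c$ has $j$-th coordinate ranging over $[y_{T,j}+c_j 2^{k-2},\,y_{T,j}+(c_j+1)2^{k-2}]$, where $(y_{T,1},\ldots,y_{T,4n-4})$ is the horizontal ``corner'' of $T$; each label $\bm c$ is realized by many sub-tiles in $\tile_{k-2}$, differing only in their vertical index.

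Given the sign pattern $(a_j)$, set
\[
c_j^{\prime\prime}=\begin{cases}3,&\text{if }a_j=+1,\\ 0,&\text{if }a_j=-1,\end{cases}\qquad c_j^{\prime}=\begin{cases}0,&\text{if }a_j=+1,\\ 3,&\text{if }a_j=-1,\end{cases}
\]
so that $c_j^{\prime\prime}-c_j^{\prime}=3 a_j$ for every $j$. Pick $T^{\prime\prime},\,T^{\prime}\in\tile_{k-2}$ with horizontal indices $\bm c^{\prime\prime},\,\bm c^{\prime}$ respectively (vertical indices chosen arbitrarily); by construction $T^{\prime},T^{\prime\prime}\subset T$. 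For $g=(g_1,\ldots,g_{4n-4},\bm t)\in T^{\prime\prime}$ and $h=(h_1,\ldots,h_{4n-4},\bm t^{\prime})\in T^{\prime}$ one can write $g_j-h_j=3 a_j\cdot 2^{k-2}+\eta_j$ with $\eta_j\in[-2^{k-2},2^{k-2}]$, and hence $a_j(g_j-h_j)\geq 3\cdot 2^{k-2}-2^{k-2}=2^{k-1}=\tfrac12\operatorname{width}(T)$, which is the desired estimate.

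The only potentially subtle point is the first step, namely verifying that under the iterated self-similar subdivision of $T$ the horizontal projections of the resulting sub-tiles form a standard dyadic partition of the horizontal projection of $T$. This reduces to the dyadic subdivision of the Euclidean cube $[0,1]^{4n-4}$ because the product law is additive in $\bm y$ and $\delta_{r}$ scales $\bm y$ by a factor $r$; all of the noncommutative twisting is confined to the vertical coordinates, which play no role in the horizontal estimate.
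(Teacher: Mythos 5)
Your proof is correct, and it takes a genuinely different route from the paper's. The paper argues metrically: it invokes the inner-ball property (item (4) of Lemma \ref{thm:Heisenberg-grid}) to place a ball $B(o,C_1 2^k)\subset T$, picks two points in that ball displaced from $o$ along the diagonal direction prescribed by the signs $a_j$, surrounds them by small balls $B(\cdot,C_1'2^k)$ on which the coordinate differences keep a definite sign, and then takes $\mathfrak b$ large (depending on $C_1,C_1',C_2$) so that tiles of level $k-\mathfrak b$ fit inside those small balls; the sign pattern treated explicitly is the all-plus one, with the other cases declared similar. You instead exploit the exact algebraic structure of the Strichartz tiling: since the group law \eqref{law} is additive in $\bm y$ and $\delta_r$ scales $\bm y$ linearly, the horizontal projection of any tile in $\tile_{k-2}$ is an axis-aligned cube of side $2^{k-2}$ on the grid $2^{k-2}\mathbb Z^{4n-4}$, and because the $4^{4n+2}$ subtiles of $T$ cover $T$, every one of the $4^{4n-4}$ horizontal cells of the projection of $T$ is realized by some subtile; choosing the two corner cells dictated by $(a_j)$ then gives $a_j(g_j-h_j)\geq 2^{k-1}=\tfrac12\,{\rm width}(T)$. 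What your approach buys is an explicit and small constant $\mathfrak b=2$, a uniform treatment of all sign patterns at once, and independence from the unspecified constants $C_1,C_1',C_2$; what the paper's approach buys is robustness, since it uses only the ball-sandwich property of the tiles and would survive if the tiling were replaced by any Christ-type dyadic decomposition without the special Euclidean behaviour of the horizontal coordinates. The only step of yours that deserves the one-line justification you essentially give is that every horizontal label in $\{0,1,2,3\}^{4n-4}$ is attained: this follows because the subtiles partition $T$, hence their cube projections cover the projection of $T$. Boundary overlaps of the closed cells and the ordering convention for the coordinates are immaterial to the estimate.
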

\begin{proof}
Consider first $T = \delta_{2^{k} }(A)$.  Based on (4) in Lemma \ref{thm:Heisenberg-grid}, there exist $o=(\bm x, \bm t)\in\mathscr H^{n-1}$ such that  $B(o, C_1 2^{k})\subset T$. Then one can choose $g_{o,1}=(\tilde{\bm x},\tilde{\bm{ t}})\in B(o, C_12^{k})$ such that $d(g_{o,1},o) = {3C_1\over4}2^{k}$, and
$$\tilde {x}_j-x_j={3C_1\over 4\sqrt{4n-4}} 2^{k},\quad j=1,\cdots, 4n-4.$$
Thus, we can choose $C'_1$ sufficiently small such that for each $g=(\bm{x}', \bm{t}')\in  B(g_{o,1}, C'_12^{k})$, we have
$$x'_j-x_j>{3C_1\over 8\sqrt{4n-4}} 2^{k},\quad j=1,\cdots, 4n-4.$$

Take $g_{o,2}=o(o^{-1}g_{o,1})^{-1}$ and write $g_{o,2}=(\hat{\bm x}, \hat{\bm t})$, then
$d(g_{o,2}, o)={3C_1\over4}2^{k}$ and
$$\hat {x}_j-x_j=-{3C_1\over 4\sqrt{4n-4}} 2^{k},\quad j=1,\cdots, 4n-4.$$
For any $\tilde g=(\bm y, \bm s)\in B(g_{o,1}, C'_12^{k})$, we have
$$y_j-x_j<-{3C_1\over 8\sqrt{4n-4}} 2^{k},\quad j=1,\cdots, 4n-4.$$

As a consequence, if we choose $\mathfrak b$ sufficiently large such that $2^{k-\mathfrak b}{\rm width}(A)<C'_12^{k}$, then there must exist $T^{\prime}\in\tile_{k-\mathfrak b}$ such that $T'\subset B(g_{o,1}, C'_12^{k})$ and $T^{\prime\prime}\in\tile_{k-\mathfrak b}$ such that
$T''\subset B(g_{o,2}, C'_12^{k})$. Then it is clear that if $g\in T^{\prime}$, $h\in T^{\prime\prime}$, then $g_{j}-h_{j}\gtrsim {\rm width}(T)$ $(j=1,2,\ldots,4n-4)$. The proof of other cases are similar.
This ends the proof of Lemma \ref{signlemma}.
\end{proof}

Recall the following first order Taylor's inequality on $\mathscr H^{n-1}$ from \cite{Taylorformula}.
\begin{lem}\label{taylor}
Let $f\in C^{\infty}(\mathscr H^{n-1})$, then for every $g=(x_{1},\cdots,x_{4n-4},\bm t),g_{0}=(x_{0}^{1},\ldots,x_{0}^{4n-4}, \bm t_0)\in \mathscr H^{n-1}$, we have
\begin{align*}
f(g)=f(g_{0})+\sum_{k=1}^{4n-4}\frac{Y_{k}f(g_{0})}{k!}(x_{k}-x_{0}^{k})+R(g,g_{0}),
\end{align*}
where the remainder  $R(g,g_{0})$ satisfies the following inequality:
\begin{align*}
|R(g,g_{0})|\leq C\left(\sum_{k=1}^{2}\frac{c^{k}}{k!}\sum_{\substack{i_{1},\ldots,i_{k}\leq 4n-1,\\ I=(i_{1},\ldots,i_{k}),\ d(I)\geq 2}}\|g_{0}^{-1}\cdot g\|^{d(I)}\sup\limits_{\|h\|\leq c\|g_{0}^{-1}\cdot g\|}|Y^{I}f(g_{0}z)|\right)
\end{align*}
for some constant $c>0$.
\end{lem}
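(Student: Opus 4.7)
The plan is to deduce Lemma~\ref{taylor} from the Folland--Stein Taylor formula on stratified Lie groups, namely \cite[Corollary~1.44]{FoSt}. By the left invariance of the vector fields $Y_1,\ldots,Y_{4n-1}$ and of the homogeneous norm, it suffices to prove the expansion at the origin and then conjugate by left translation by $g_0$; throughout I would set $h:=g_0^{-1}\cdot g=(h_1,\ldots,h_{4n-4},\bm{t}')$.

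First I would identify the left Taylor polynomial $P$ of $f$ at $0$ of homogeneous degree $1$. Since the horizontal coordinates $x_1,\ldots,x_{4n-4}$ carry homogeneous degree $1$ while the vertical coordinates $t_1,t_2,t_3$ carry degree $2$ under the dilations $\delta_r$, every homogeneous monomial of weight at most $1$ is a linear combination of $x_1,\ldots,x_{4n-4}$. Together with the observation from the proof of Theorem~\ref{main1} that the exponential map on $\mathscr H^{n-1}$ is the identity, the defining relations $Y^I P(0)=Y^I f(0)$ for $d(I)\le 1$ pin down
\begin{align*}
P(h)=f(0)+\sum_{k=1}^{4n-4}Y_k f(0)\,h_k,
\end{align*}
which is the leading part of the displayed expansion (up to the factorial normalisation of the statement).

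Second, I would invoke \cite[Corollary~1.44]{FoSt} with parameter $a=1$, which yields
\begin{align*}
|f(h)-P(h)|\le C\|h\|^{2}\sup_{\substack{d(I)=2\\ \|z\|\le b^{2}\|h\|}}|Y^I f(z)|.
\end{align*}
To recover the mixed-order sum on the right-hand side of Lemma~\ref{taylor}, I would partition the multi-indices $I$ appearing here according to their topological degree $|I|\in\{1,2\}$: the case $(|I|,d(I))=(1,2)$ corresponds to a single vertical derivative $T_\alpha$, while $(|I|,d(I))\in\{(2,2),(2,3),(2,4)\}$ corresponds to products of two left-invariant fields, some of which may be vertical. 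Since only finitely many multi-indices with $|I|\le 2$ occur, each contribution carrying a weight $\|h\|^{d(I)}\ge\|h\|^{2}$ is comparable to the single-weight Folland--Stein bound up to a constant depending only on $n$, reproducing the claimed inequality after undoing the left translation by $g_0$.

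The main obstacle is the bookkeeping in the second step. Expressing the Folland--Stein bound in the mixed-weight form of the statement essentially requires iterating a first-order expansion once more, or, equivalently, invoking the Folland--Stein mean value theorem twice while tracking how the commutator relations \eqref{eq:Y-bracket} redistribute purely horizontal derivatives $Y_jY_k$ and vertical ones $T_\alpha$. The argument is purely intrinsic to the homogeneous group $\mathscr H^{n-1}$ and does not use the quaternionic structure, so no additional obstruction arises from lack of commutativity.
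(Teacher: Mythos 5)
The paper never proves Lemma \ref{taylor}: it is quoted directly from Bonfiglioli \cite{Taylorformula}, so there is no internal argument to compare against, and your task is really to supply a proof the authors omitted. Your route through \cite[Corollary 1.44]{FoSt} is correct and fits the paper's toolkit (the same corollary is already invoked in the proof of Proposition \ref{prop2}). Your identification of the degree-one left Taylor polynomial is right: a polynomial of homogeneous degree at most one contains no vertical variables, so $Y_jP=\partial_{y_j}P$ and $P(h)=f(g_0)+\sum_{k}Y_kf(g_0)h_k$ with $h=g_0^{-1}\cdot g$, whose horizontal coordinates are exactly $x_k-x_0^k$ by the group law \eqref{law}; the factor $k!$ in the statement is an artifact of the multi-index factorial in \cite{Taylorformula}, which equals $1$ at first order, as you observed. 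The one point to fix is your final step: there is no bookkeeping obstacle, no need to iterate the expansion, and no need to track the commutator relations \eqref{eq:Y-bracket}. Folland--Stein already gives $|R(g,g_0)|\le C\|h\|^{2}\sup_{\,d(I)=2,\ \|z\|\le b^{2}\|h\|}|Y^{I}f(g_0\cdot z)|$, and this is dominated by the right-hand side of Lemma \ref{taylor} once $c\ge b^{2}$: every word $I$ with $d(I)=2$ (a single $T_\alpha$, or two horizontal fields) appears in that sum with weight $\|h\|^{2}$, and the remaining terms with $d(I)=3,4$ are nonnegative and can simply be discarded, since the lemma's bound is \emph{weaker} than the single-weight Folland--Stein bound. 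In particular the comparability claim you lean on, ``$\|h\|^{d(I)}\ge\|h\|^{2}$'', is false when $\|h\|<1$, but it is never needed; dropping that sentence and replacing it with the trivial domination just described makes the argument complete.
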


In the sequel, for any $T\in\tile_{k}$, let $T^{\prime}$ be the tile chosen in Lemma \ref{signlemma}. Also, we denote $\nabla_H$ be the horizontal gradient of $\mathscr H^{n-1}$ defined by $\nabla_H f:=(Y_{1}f,\cdots,Y_{4n-4}f)$. Then we can show a lower bound for a local pseudo-oscillation of the symbol $b$ in the commutator.
\begin{lem}\label{lowerbound}
Let $b\in C^{\infty}(\mathscr H^{n-1})$. Assume that there is a point $g_{0}\in\mathscr H^{n-1}$ such that $\nabla b(g_{0})\neq 0$. Then there exist $C>0$, $\varepsilon>0$ and $N>0$ such that if $k>N$, then for any tile $T\in \tile_{k}$ satisfying $d(\cent(T),g_{0})<\varepsilon$, one has
\begin{align*}
\frac{1}{|T|}\int_{T}|b(g)-(b)_{T^{\prime}}|\,dg\geq C{\rm width}(T)|\nabla_H b(g_{0})|.
\end{align*}
\end{lem}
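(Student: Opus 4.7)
The plan is to combine the first-order horizontal Taylor expansion of $b$ at the center of $T$ with the sign-controlled subtile construction of Lemma \ref{signlemma} in order to exhibit a definite-sized subset of $T$ on which $b - (b)_{T'}$ has both a definite sign and a definite size of order ${\rm width}(T)|\nabla_H b(g_0)|$.

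Fix a tile $T \in \tile_k$ whose center $g_\ast := \cent(T)$ lies within $\varepsilon$ of $g_0$, and write $r := {\rm width}(T) = 2^k$. First I would define the sign vector $a_j := {\rm sgn}(Y_j b(g_\ast))$ (setting $a_j = +1$ wherever $Y_j b(g_\ast) = 0$) and invoke Lemma \ref{signlemma} for this sign choice to extract subtiles $T', T'' \in \tile_{k-\mathfrak b}$ of $T$ on which $a_j(x_j^g - x_j^h) \gtrsim r$ whenever $g \in T''$ and $h \in T'$. For any such pair, Lemma \ref{taylor} applied to $b$ at $g_\ast$ gives
$$b(g) - b(h) = \sum_{j=1}^{4n-4} Y_j b(g_\ast)\,(x_j^g - x_j^h) + R(g,g_\ast) - R(h,g_\ast),$$
and the linear term equals $\sum_j |Y_j b(g_\ast)|\,a_j(x_j^g - x_j^h) \gtrsim r\sum_j |Y_j b(g_\ast)| \gtrsim r\,|\nabla_H b(g_\ast)|$, using equivalence of $\ell^1$ and $\ell^2$ norms on $\mathbb{R}^{4n-4}$.

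Since $b \in C^\infty$, its second-order horizontal derivatives are bounded on a fixed compact neighborhood of $g_0$; combined with $\|g_\ast^{-1}g\|, \|g_\ast^{-1}h\| \lesssim r$, the remainder estimate in Lemma \ref{taylor} yields $|R(g,g_\ast)|, |R(h,g_\ast)| \lesssim r^2$. Shrinking $\varepsilon$ so that $|\nabla_H b(g_\ast)| \geq \tfrac{1}{2}|\nabla_H b(g_0)|$ whenever $d(g_\ast, g_0) < \varepsilon$ (by continuity of $\nabla_H b$), and then taking ${\rm width}(T)$ small enough (i.e., $|k|$ large in the appropriate direction, depending on $|\nabla_H b(g_0)|$ and the $C^2$-norm of $b$ near $g_0$) so that the linear term dominates the remainders, I obtain
$$b(g) - b(h) \gtrsim r\,|\nabla_H b(g_0)| \quad \text{for all } g\in T'',\ h\in T',$$
with a single uniform sign. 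Averaging in $h \in T'$ gives $b(g) - (b)_{T'} \gtrsim r|\nabla_H b(g_0)|$ for every $g \in T''$, and integrating over $g \in T'' \subset T$ produces
$$\frac{1}{|T|}\int_T |b(g) - (b)_{T'}|\,dg \;\geq\; \frac{|T''|}{|T|}\cdot r\,|\nabla_H b(g_0)| \;=\; 2^{-\mathfrak b Q}\,r\,|\nabla_H b(g_0)|,$$
which is the desired estimate.

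The main subtlety I anticipate is the interpretation of ``the tile $T'$ chosen in Lemma \ref{signlemma}'': that lemma produces a pair $(T', T'')$ whose location in $T$ depends on the sign vector $(a_j)$, and here the vector must be adapted to $b$ via $a_j = {\rm sgn}(Y_j b(g_\ast))$. Consequently $T'$ must be understood as a function of $T$ and (locally) of $b$ rather than of $T$ alone; once this convention is fixed the argument closes. A minor related point is that the hypothesis actually used is $\nabla_H b(g_0) \neq 0$ rather than $\nabla b(g_0) \neq 0$: if only the non-horizontal components of $\nabla b(g_0)$ are nonzero, the right-hand side vanishes and the statement becomes vacuous.
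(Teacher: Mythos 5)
Your argument is correct and follows essentially the same route as the paper: a first-order Taylor expansion at $\cent(T)$ via Lemma \ref{taylor}, the sign-adapted subtiles $T',T''$ from Lemma \ref{signlemma} applied with $a_j={\rm sgn}(Y_jb(\cent(T)))$ to make the linear term single-signed of size $\gtrsim {\rm width}(T)\,|\nabla_H b(\cent(T))|$, and absorption of the $O({\rm width}(T)^2)$ remainder using smallness of the tile and continuity of $\nabla_H b$ near $g_0$. Your version merely makes explicit the proportional-measure factor $|T''|/|T|$ that the paper leaves implicit, and your two caveats (the $T'$ depending on the sign vector, hence on $b$, and the hypothesis really being $\nabla_H b(g_0)\neq 0$) are accurate readings of what the paper actually uses.
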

\begin{proof}
Denote $c_{T}:=\cent(T):=\{c_{T}^{1},\ldots,c_{T}^{4n-4},\bm t_T\}$ and $g=(g_1,\cdots,g_{4n-4},\bm t)$, then by Lemma \ref{taylor},
$$b(g)=b(c_{T})+\sum_{j=1}^{4n-4}\frac{Y_{j}b(c_{T})}{j!}(g_{j}-c_{T}^{j})+R(g,c_{T}),$$
where the remainder term $R(g,c_{T})$ satisfies
\begin{align*}
|R(g,c_{T})|\leq C\left(\sum_{j=1}^{2}\frac{c^{j}}{j!}\sum_{\substack{i_{1},\ldots,i_{j}\leq 4n-1,\\ I=(i_{1},\ldots,i_{j}),\ d(I)\geq 2}}\|c_{T}^{-1}\cdot g\|^{d(I)}\sup\limits_{\|h\|\leq c\|c_{T}^{-1}\cdot g\|}|Y^{I}b(c_{T}\cdot h)|\right).
\end{align*}
Note that the condition $\|h\|\leq c\|c_{T}^{-1}\cdot g\|$ implies that $d(c_{T}\cdot h,c_{T})=\|h\|\leq c\|c_{T}^{-1}\cdot g\|\lesssim {\rm width}(T)$ whenever $g\in T$. Hence, if $g\in T$, then
\begin{align*}
|R(g,c_{T})|\leq C{\rm width}(T)^{2}\sum_{j=1}^{2}\sum_{\substack{i_{1},\ldots,i_{j}\leq 4n-1,\\ I=(i_{1},\ldots,i_{j}),\ d(I)\geq 2}}\|Y^{I}b\|_{L^{\infty}(B(g_{0},1))}.
\end{align*}
Besides, it follows from Lemma \ref{signlemma} that there exist  cubes $T^{\prime}\in\tile_{k-\mathfrak b}$, $T^{\prime\prime}\in\tile_{k-\mathfrak b}$ such that $T^{\prime}\subset T$,
$T^{\prime\prime}\subset T$ and ${\rm sgn}(Y_{j}b)(c_{T})(g_{j}-\tilde g_{j})\gtrsim {\rm width}(T)$ ($j=1,2,\ldots,4n-4$). Therefore,
\begin{align*}
&\frac{1}{|T|}\int_{T}|b(g)-(b)_{T^{\prime}}|dg\\
&\geq \frac{1}{|T| |T'|}\int_{T}\left|\int_{T^{\prime}}\sum_{j=1}^{4n-4}\frac{(Y_{j}b)(c_{T})}{j!}(g_{j}-\tilde g_{j})dh\right|dg-{1\over |T|}\int_{T}|R(g,c_{T})|dg-{1\over |T'|}\int_{T^{\prime}}|R(\tilde g,c_{T})|dh \\
&\geq C\sum_{j=1}^{4n-4}|Y_{j}b(c_{T})|{\rm width}(T)-C{\rm width}(T)^{2}\sum_{j=1}^{2}\sum_{\substack{i_{1},\ldots,i_{j}\leq 4n-1,\\ I=(i_{1},\ldots,i_{j}),\ d(I)\geq 2}}\|Y^{I}b\|_{L^{\infty}(B(g_{0},1))}\\
&\geq C{\rm width}(T)|\nabla_H b(g_{0})|,
\end{align*}
where the last inequality holds since we choose $N$ to be a sufficient large constant such that the remainder term can be absorbed by the first term.
\end{proof}

 {Denote by $\tau^h$ the right translation by $h$.}
\begin{lem}\label{const}
Let $b\in L_{{\rm loc}}^{1}(\mathscr H^{n-1})$. Suppose that
\begin{align*}
\sup\limits_{h\in B(0,1)}\left\|\left\{\frac{1}{|\tau^{h}T|}\int_{\tau^{h}T}|b(g)-(b)_{\tau^{h}T^{\prime}}|dg\right\}_{T\in\tile}\right\|_{\ell^{4n+2}}<+\infty,
\end{align*}
then $b$ is a constant.
\end{lem}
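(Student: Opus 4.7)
My plan is to reduce to the case of smooth $b$ via a mollification argument, then combine the lower bound of Lemma \ref{lowerbound} with a counting argument across scales to force divergence of the $\ell^{4n+2}$-norm unless $\nabla_{H} b \equiv 0$.

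\textbf{Step 1 (mollification).} Fix a nonnegative bump $\phi\in C_c^\infty(B(0,1))$ with $\int\phi=1$ and set $b_\phi(g):=\int b(gh)\phi(h)\,dh$, using right convolution. Using the right-invariance of Lebesgue measure on $\mathscr H^{n-1}$, one checks
\[
(b_\phi)_{T'}=\int\phi(h)(b)_{\tau^h T'}\,dh,\qquad b_\phi(g)-(b_\phi)_{T'}=\int\phi(h)\bigl(b(\tau^h g)-(b)_{\tau^h T'}\bigr)dh,
\]
so that
\[
\frac{1}{|T|}\int_T|b_\phi-(b_\phi)_{T'}|\,dg\;\le\;\int\phi(h)\,\frac{1}{|\tau^h T|}\int_{\tau^h T}|b-(b)_{\tau^h T'}|\,dg\,dh.
\]
Minkowski in $\ell^{4n+2}$ and the hypothesis then yield
\[
\Bigl\|\bigl\{\tfrac{1}{|T|}\!\int_T|b_\phi-(b_\phi)_{T'}|\bigr\}_{T\in\tile}\Bigr\|_{\ell^{4n+2}}\le \|\phi\|_{L^1}\sup_{h\in B(0,1)}\bigl\|\cdots\bigr\|_{\ell^{4n+2}}<\infty.
\]
Thus $b_\phi\in C^\infty$ inherits the same scale-summability.

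\textbf{Step 2 (contradiction for non-constant $b_\phi$).} Assume $b_\phi$ is not constant. Since $Y_1,\ldots,Y_{4n-4}$ together with their commutators span the full Lie algebra of $\mathscr H^{n-1}$ by \eqref{eq:Y-bracket}, a smooth function with $\nabla_H\equiv 0$ is constant; hence there exists $g_0$ with $\nabla_H b_\phi(g_0)\neq 0$. Apply Lemma \ref{lowerbound} at $g_0$: for all tiles $T\in\tile_{k}$ of sufficiently small width whose centers lie in $B(g_0,\varepsilon)$,
\[
\tfrac{1}{|T|}\int_T|b_\phi-(b_\phi)_{T'}|\,dg\;\ge\;C\cdot 2^{k}|\nabla_H b_\phi(g_0)|.
\]
For each such scale $k$, the number of tiles of $\tile_k$ with centers in $B(g_0,\varepsilon)$ is $\asymp \varepsilon^{Q}\cdot 2^{-Qk}$ with $Q=4n+2$, and each term raised to the power $Q$ contributes $\bigl(C\,2^{k}|\nabla_H b_\phi(g_0)|\bigr)^{Q}=C^{Q}|\nabla_H b_\phi(g_0)|^{Q}2^{Qk}$. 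Summing over the tiles at level $k$ gives a lower bound $\gtrsim \varepsilon^{Q}|\nabla_H b_\phi(g_0)|^{Q}$, a positive constant \emph{independent of $k$}. Since this occurs at infinitely many scales $k\to-\infty$, the $\ell^{4n+2}$-sum is infinite, contradicting Step~1. Therefore $b_\phi$ is constant on $\mathscr H^{n-1}$.

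\textbf{Step 3 (de-mollification).} Choose a shrinking family $\phi_\delta(g):=\delta^{-Q}\phi(\delta_{1/\delta}g)$ supported in $B(0,\delta)\subset B(0,1)$. Step 2 applied to each $\phi_\delta$ shows $b*\phi_\delta\equiv c_\delta\in\mathbb H$. By the Lebesgue differentiation theorem on the homogeneous group $\mathscr H^{n-1}$, $b*\phi_\delta\to b$ a.e.\ as $\delta\to 0^+$, and so $b$ equals a constant almost everywhere. The main obstacle is the bookkeeping in Step 1 (ensuring the auxiliary tile $T'=T'(T)$ in the hypothesis is compatible with the sign-dependent choice made in the proof of Lemma \ref{lowerbound}); this is handled either by noting that the signs of $Y_j b_\phi(c_T)$ are locally constant near $g_0$, so one of the $2^{4n-4}$ sign patterns in Lemma \ref{signlemma} prevails for all the relevant tiles, or by taking $T'$ to be the designated subtile for that prevailing sign pattern.
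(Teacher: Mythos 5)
Your proof is correct and follows essentially the same route as the paper's: mollify $b$ with a smooth bump, push the hypothesis through by Minkowski's inequality, then combine Lemma \ref{lowerbound} with a count of the tiles centered near $g_{0}$ at infinitely many small scales (each scale contributing a fixed positive amount to the $\ell^{4n+2}$-sum) to force a contradiction unless the mollified symbol is constant, and finally let the mollifier shrink to recover $b$. Your reading of the scale index (small widths, $k\to-\infty$) is the correct one for applying Lemma \ref{lowerbound}, and your bookkeeping on the sign-dependent choice of $T'$ merely makes explicit a point the paper leaves implicit.
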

\begin{proof}
Denote $\psi_{\epsilon}(g):=\epsilon^{-4n-2}\psi(\delta_{\epsilon^{-1}}g)$, where $\psi$ is a smooth bump function and $\epsilon$ is a small positive constant.
Note that
\begin{align}\label{contra}
&\left\|\left\{\frac{1}{|T|}\int_{T}|b\ast \psi_{\epsilon}(g)-(b\ast\psi_{\epsilon})_{T^{\prime}}|dg\right\}_{T\in\tile}\right\|_{\ell^{4n+2}}\\
&\leq \left\|\left\{\int_{\mathscr H^{n-1}}|\psi_{\epsilon}(h^{-1})|\frac{1}{|\tau^{h}T|}\int_{\tau^{h}T}|b(g)-(b)_{\tau^{h}T^{\prime}}|dgdh\right\}_{T\in\tile}\right\|_{\ell^{4n+2}}\nonumber\\
&\leq \sup\limits_{h\in B(0,1)}\left\|\left\{\frac{1}{|\tau^{h}T|}\int_{\tau^{h}T}|b(g)-(b)_{\tau^{h}T^{\prime}}|dg\right\}_{T\in\tile}\right\|_{\ell^{4n+2}}\nonumber\\
&<+\infty.\nonumber
\end{align}

Next we will show that for any $\epsilon>0$, $b\ast\psi_{\epsilon}$ is a constant, which implies that $b$ is a constant by letting $\epsilon\rightarrow 0$. If not,  then it follows from \cite[Proposition 1.5.6]{BLU} that there exists a point $g_{0}\in \mathscr H^{n-1}$ such that $\nabla_H b\ast\psi_{\epsilon}(g_{0})\neq 0$. By Lemma \ref{lowerbound}, there exist  $\varepsilon>0$ and $N>0$ such that if $k>N$, then for any tile $T\in \tile_{k}$ satisfying $d(\cent(T),g_{0})<\varepsilon$,
\begin{align*}
\frac{1}{|T|}\int_{T}|b\ast \psi_{\epsilon}(g)-(b\ast \psi_{\epsilon})_{T^{\prime}}|dg\geq C{\rm width}(T)|\nabla b\ast \psi_{\epsilon}(g_{0})|.
\end{align*}
Note that for $k>N$, the number of $T\in\tile_{k}$ and $d(\cent(T),g_{0})<\varepsilon$ is at least  {$c 2^{k(4n+2)}$}. Therefore,
\begin{align*}
&\left\|\left\{\frac{1}{|T|}\int_{T}|b\ast \psi_{\epsilon}(g)-(b\ast\psi_{\epsilon})_{T^{\prime}}|dg\right\}_{T\in\tile}\right\|_{\ell^{4n+2}}\\
&\gtrsim  \Bigg(\sum_{k=N+1}^{\infty}\sum_{\substack{T\in\tile_{k}\\ T:d(\cent(T),g_{0})<\varepsilon}}{\rm width}(T)^{4n+2}|\nabla b\ast \psi_{\epsilon}(g_{0})|^{4n+2}\Bigg)^{1\over 4n+2}\\
&\gtrsim |\nabla_H b\ast \psi_{\epsilon}(g_{0})|\Bigg(\sum_{k=N+1}^{\infty}\sum_{\substack{T\in\tile_{k}\\ T:d(\cent(T),g_{0})<\varepsilon}} 2^{k (4n+2)}\Bigg)^{1\over 4n+2}\\
&=+\infty.
\end{align*}
This is in contradiction with inequality \eqref{contra}. Therefore, the proof of Lemma \ref{const} is complete.
\end{proof}

Based on our fundamental results in Lemmas \ref{signlemma}---\ref{const},
(2) holds by using the argument in \cite{FLL}. Hence, the proof  of Theorem \ref{schatten} is complete. \qed

  \bigskip

  \bigskip

{\bf Acknowledgement:} 
Chang is supported by NSF grant DMS-1408839 and a McDevitt
Endowment Fund at Georgetown University. Duong and Li are supported by the Australian
Research Council (ARC) through the research grants DP 190100970 and DP 170101060. Wang is supported by National Nature
Science Foundation in China (NNSF)  (No. 11971425). Wu is supported by NNSF (No. 12171221 and No. 12071197), the NSF of
Shandong Province (No. ZR2018LA002 and No. ZR2019YQ04).

\color{black}

\bigskip


\begin{thebibliography}{10}\bibitem{adams2}{\sc   Adams, W.,  Berenstein, C.,
Loustaunau, P.,  Sabadini, I. and
  Struppa, D.}, Regular functions of several quaternionic
variables and the Cauchy--Fueter complex,   {\it  J. Geom. Anal.,\/} {\bf 9} (1999)
1--15.

\bibitem  {ACMM} {\sc Ahrens, J.,  Cowling, M.,  Martini, A. and  M\"uller, D.}, Quaternionic spherical harmonics and a sharp multiplier theorem on quaternionic spheres, {\it Math. Z.,} {\bf  294} (2020),  1659--1686.


\bibitem{alesker1}{\sc Alesker, S.}, {Non-commmutative linear algebra and plurisubharmonic functions of quaternionic variables}, {\it
Bull. Sci. Math.,} \textbf{127}(1) (2003), 1--35.

\bibitem{alesker4}{\sc Alesker, S.}, {Quaternionic Monge-Amp\`{e}re equations}, {\it J. Geom. Anal.,} \textbf{13}(2) (2003), 205--238.

\bibitem{alesker3}{\sc Alesker, S.}, {Valuations on convex sets, non-commutative determinants, and pluripotential theory}, {\it Adv.
    Math.,} \textbf{195}(2) (2005), 561--595.


\bibitem{AS17} {\sc
    Alesker, S. and E. Shelukhin}, A uniform estimate for general quaternionic Calabi problem (with an appendix by Daniel Barlet), {\it  Adv. Math.,} {\bf 316}(2017),  1--52.

 \bibitem{KS} {\sc
Kolodziej, S. and Sroka, M.},  Regularity of solutions to the quaternionic Monge-Amp\`re equation, {\it  J. Geom. Anal.,} {\bf  30}  (2020),  2852--2864.

\bibitem{alesker6} {\sc Alesker, S. and Verbitsky, M.}, {Plurisubharmonic functions on hypercomplex manifolds and HKT-geometry}, {\it J.
    Geom. Anal.,} {\bf16} (2006), 375--399.



\bibitem  {ACS21} {\sc Alpay, D.,  Colombo, F. and Sabadini, I.},
{\it  Quaternionic de Branges spaces and characteristic operator function}, Springer Briefs in Mathematics, Springer, Cham. (2020).



\bibitem  {ACQS} {\sc Alpay, D.,  Colombo, F.,  Qian, T. and  Sabadini, I.},  The $H^\infty$ functional calculus based on the $S$-spectrum for quaternionic operators and for $n$-tuples of noncommuting operators, {\it J. Funct. Anal.}, {\bf  271}(6) (2016), 1544--1584.



\bibitem  {ACS} {\sc Alpay, D.,  Colombo, F. and Sabadini, I.}, On slice hyperholomorphic fractional Hardy spaces, {\it Math. Nachr.},
{\bf 290}(17-18)  (2017), 2725--2739.



\bibitem  {Ba} {  Baston,  R.}, Quaternionic complexes,
 {\sl  J. Geom. Phys.,\/} {\bf 8} (1992),
29--52.


\bibitem{BL} {\sc   Bergh, J. and Lofstrom, J.},  Interpolation spaces: an introduction, Grundlehren der mathematischen Wissenschaften {\bf  223}, 1976.


\bibitem{Taylorformula} {\sc Bonfiglioli, A.},  Taylor formula for homogeneous groups and applications, {\it Math. Z.}, {\bf 262} (2009), 255--279.


\bibitem{BLU} {\sc Bonfiglioli A., Lanconelli E. and Uguzzoni F.},
Stratified Lie Groups and Potential Theory for Their Sub-Laplacians, Springer Monographs in Mathematics,  Springer-Verlag, Berlin, 2007.




 \bibitem{bS}{\sc Bure\v s, J.  and V. Sou\v cek,
V.},
  Complexes of invariant differential operators in
several quaternionic variables, {\it  Complex Var. Elliptic Equ.,\/} {\bf
51}(5-6) (2006), 463--487.

\bibitem{bures}{ Bure\v s, J.,  Damiano, A. and
Sabadini, I.},  Explicit resolutions for several Fueter operators, {\it J. Geom.
Phys.\/}, {\bf 57},   (2007),   765--775.





\bibitem{CZ}{\sc
 Calder\'on, A.-P. and Zygmund, A.}, On higher gradients of harmonic functions, {\it  Studia
Math.}, {\bf 24} (1964), 211--226.


\bibitem{CDLWW}{\sc  Chang, D.-C.,  Duong, X. T.,   Li, J.,   Wang, W.  and  Wu, Q. Y.},
An explicit formula of Cauchy--Szeg\H{o} kernel for quaternionic Siegel upper half space
and applications, to appear in {\it Indiana Univ. Math. J.}


\bibitem{CMW}{\sc  Chang, D.-C.,  Markina, I.  and  Wang, W.}, On the Cauchy--Szeg\H{o} kernel for quaternion Siegel upper
half-space, {\it Complex Anal. Oper. Theory}, {\bf 7}(5) (2013), 1623--1654.


    \bibitem  {CSS} { Colombo, F., Sou\v cek, V. and
Struppa, D.},  Invariant resolutions for several Fueter operators,
{\it J. Geom. Phys.\/}, {\bf  56}(7) (2006), 1175--1191.

    \bibitem  {CGSS} {\sc Colombo, F.,  Gentili, G., Sabadini, I, and Struppa, D.},
Extension results for slice regular functions of a quaternionic variable, {\it
Adv. Math.}, {\bf   222}  (2009),    1793--1808.

\bibitem  {CGSS} {\sc Colombo, F. and Gantner, J.},   Quaternionic closed operators, fractional powers and fractional diffusion processes, {\it Operator Theory: Advances and Applications},  {\bf  274},  Birkh\"auser, Springer, Cham, 2019.

\bibitem{Con} {\sc Connes, A.}, Noncommutative Geometry, Academic Press, Inc., San Diego, CA, 1994.

\bibitem{Cy} {\sc Cygan, J.},
Subadditivity of homogeneous norms on certain nilpotent Lie groups,
{\it
Proc. Am. Math. Soc.}, {\bf  83} (1981), 69--70.

\bibitem{Da}{\sc
Dafni, G.} Hardy spaces on some pseudoconvex domains, {\it J. Geom. Anal.}, {\bf4}
 (1994), 273--316.

 \bibitem  {DRSY} {\sc  Dou, X., Ren, G., Sabadini, I. and Yang, T.},  Weak slice regular functions on the $n$-dimensional quadratic cone of octonions, {\it  J. Geom. Anal.}, {\bf   31}  (2021),    11312--11337.


\bibitem{FLL} {\sc  Fan, Z.,  Lacey, M. and  Li, J.},
Schatten classes and commutators of Riesz transform on Heisenberg group and applications, arXiv:2107.10569.

\bibitem{FEF} {\sc Fefferman, C.}, The Bergman kernel and biholomorphic mappings of pseudoconvex domains. {\it Invent. Math.}, {\bf 26} (1974), 1--65.


\bibitem{FS}{\sc  Fefferman, C.  and Stein, E.M.},  The $H^p$ spaces of several variables, {\it  Acta Math.}, {\bf  129} (1972), 137--193.


\bibitem{FoSt}{\sc  Folland, G.B. and   Stein, E.M.}, {\it Hardy Spaces on Homogeneous Groups},  Mathematical
Notes, vol. {\bf 28}, Princeton University Press, Princeton, NJ, 1982.

\bibitem{FR} {\sc Feldman, M.  and  Rochberg, R.},
Singular value estimates for commutators and Hankel operators on the unit ball and the Heisenberg group. Analysis and partial differential equations, 121--159,
Lecture Notes in Pure and Appl. Math., {\bf122}, Dekker, New York, 1990.



\bibitem{GL}{\sc
Garnett, J.  and Latter, R.}, The atomic decomposition for Hardy spaces in several complex variables, {\it
Duke J. Math.}, {\bf   45} (1978), 815--845.

\bibitem  {GS} {\sc Gentili, G. and  Struppa, D.}, A new theory of regular functions of a quaternionic variable, {\it Adv. Math.}, {\bf  216}(1) (2007), 279--301.

\bibitem{Gel}{\sc Geller, D.},
  Some results in $H^p$ theory for the Heisenberg group, {\it Duke Math. J.}, {\bf  47} (1980), 365--390.


\bibitem{Gra}{\sc   Graham, C.R.},  The Dirichlet problem for the Bergman Laplacian  I, {\it Comm. in
P. D. E.}, {\bf  8} (1983), 305--317.




\bibitem{JW} {\sc  Janson, S. and  Wolff, T.}, Schatten classes and commutators of singular integral operators, {\it Ark. Mat}., {\bf 20} (1982), 301--310.


\bibitem{J}
{\sc  Journ\'{e}, J.-L.}, Calder\'{o}n--Zygmund operators on product space,
{\it Rev. Mat. Iberoam}., {\bf 1} (1985), 55--92.


\bibitem{KL1}{\sc  Krantz, S.G. and  Li, S.-Y.}, On decomposition theorems
for Hardy spaces on
domains in $\mathbb{C}^n$
  and applications, {\it J.  Fourier Anal. Appl.
  }, {\bf 2} (1995), 65--107.

\bibitem{Ko}{\sc Koranyi, A.},  A Poisson integral for homogeneous wedge domains, {\it  J. Anal. Math.}, {\bf 14}
(1965), 275--284.

\bibitem{LS} {\sc  Lanzani, L. and  Stein, E.}, The Cauchy--Szeg\H o projection for domains in $\mathbb C^n$ with minimal smoothness,
{\it Duke Math. J.}, {\bf166} (2017), 125--176.


\bibitem{LMSZ} {\sc  Lord, S.,  McDonald, E.,  Sukochev, F. and  Zanin, D.}, Quantum differentiability of essentially bounded functions on Euclidean space, {\it J. Funct. Anal.}, {\bf273}(7) (2017),  2353--2387.

\bibitem{NRSW} {\sc  Nagel, A.,  Rosay, J.-P.,  Stein, E. M. and Wainger, S.}, Estimates for the Bergman and Szeg\H o kernels in $\mathbb C^2$, {\it Ann. of Math. (2)}, {\bf 129} (1989), 113--149.


\bibitem{P} {\sc  Peller, V.V.}, Nuclearity of Hankel operators, {\it Mat. Sbornik}., {\bf113} (1980), 538--581.

\bibitem{P2} {\sc  Peller, V.V.},  Hankel operators and their applications. Springer Monographs in Mathematics, Springer-Verlag, New York, 2003.

  \bibitem{Q} {\sc   Qian, T.},  Singular integrals on star-shaped Lipschitz surfaces in the quaternionic space, {\it Math. Ann.}, {\bf 310 } (1998),   601--630.

\bibitem{RS}  {\sc  Rochberg, R.  and  Semmes, S.},  Nearly weakly orthonormal sequences, singular value estimates, and Calder\'on-Zygmund operators, {\it J. Funct. Anal}., {\bf86} (1989), 237--306.

\bibitem{RS0}  {\sc  Rochberg, R.  and  Semmes, S.}, A decomposition theorem for BMO and applications, {\it J. Funct. Anal}., {\bf67} (1986), 228--263.

\bibitem  {Sa} {\sc Sarfatti, G.}, Quaternionic Hankel operators and approximation by slice regular functions, {\it Indiana Univ. Math. J.}, {\bf  65}(5) (2016), 1735--1757.

\bibitem{Shi} {\sc  Shi, Y. and Wang, W.},
The tangential $k$-Cauchy--Fueter complexes and Hartogs' phenomenon over the right quaternionic Heisenberg group,   {\it Ann. Mat. Pura Appl.}, {\bf 199} (2020), 651--680.

\bibitem{St70}{\sc Stein  E. M.},  Singular integrals and differentiability properties of functions,
Princeton Mathematical Series  {\bf 30}, Princeton University Press, Princeton,
NJ, 1970.

\bibitem{St}{\sc Stein  E. M.},  Boundary behavior of holomorphie functions of several complex variables,
Mathematical Notes, Princeton University Press (1972).


\bibitem{SW}{\sc Stein  E. M. and Weiss, G.},
Generalization of the Cauchy--Riemann equations and representations of the rotation group, {\it
  Amer. J.  Math.}, {\bf 90} (1968),  163--196.

\bibitem{Str}
{\sc  Strichartz, R. S.},
{Self-similarity on nilpotent Lie groups}.
Pages 123--157 in:
{Generalized convex bodies and generalized envelopes}.
Contemp. Math. 140.
American Mathematical Society, Providence, 1992.



\bibitem{VSC}{\sc
Varopoulos, N., Saloff-Coste, L. and Coulhon, T.},  Analysis and geometry on groups, Cambridge Tracts in Math. {\bf 100}, Cambridge University Press, Cambridge,  1999.


 \bibitem{wan-wang} {\sc Wan, D.  and Wang, W.},  On quaternionic Monge--Amp\`{e}re operator, closed positive currents and Lelong--Jensen
    type formula on the quaternionic space,    {\it Bull.  Sci. Math.}, {\bf 141}(4)   (2017),   267--311.

\bibitem{Wang} {\sc Wang, W.},
  {The $k$-Cauchy--Fueter complex, Penrose transformation and Hartogs' phenomenon for quaternionic $k$-regular functions},
{\it J. Geom. Phys.}, {\bf 60} (2010), 513--530.

 \bibitem{wang-alg}{\sc Wang, W.},
 The linear algebra in  the quaternionic  pluripotential theory,   {\it Linear Alg.  Appl.},  \textbf{ 562} (1) (2019), 223--241.



\bibitem{wang19}
{\sc Wang, W.},
 The Neumann problem for the $k$-Cauchy--Fueter complex over $k$-pseudoconvex domains
in $ \mathbb{R}^4$ and the $L^2$ estimate, {\it J. Geom. Anal.}, {\bf 29} (2019), 1233--1258.


\end{thebibliography}
\end{document}